\providecommand {\norm}[1] {\lVert#1\rVert}
\providecommand {\abs}[1] {\lvert#1\rvert}
\providecommand {\inprod}[1]{\langle #1 \rangle}
\providecommand {\set}[1]{\lbrace #1 \rbrace}
\providecommand {\bigset}[1]{\Bigl\lbrace #1 \Bigr\rbrace}
\providecommand {\floor}[1]{\lfloor #1 \rfloor}
\providecommand {\besov}[3]{\Lambda^ {#1} _ {#2} ({#3})}
\providecommand {\cc}[2][p]{\mC^{#1}_{#2}}
\providecommand {\dadas}[3]{D^ {#1} _ {#2} ({#3})}
\providecommand {\dada}[2][M]{D^{1}_{#1} ({#2})} 
\providecommand {\app}[3]{\mathcal{E}^{#1}_{#2} ({#3})}
\providecommand {\inject}{\hookrightarrow}
\providecommand {\inv}[1]{{#1}^{-1}}
\providecommand {\domd}[1] [\mA] {\mD(#1)}
\newcommand {\one} {\mathbf{1}}
\newcommand {\bn} {\ensuremath{\mathbb{N}}}
\newcommand {\bno} {\ensuremath{\mathbb{N}_0}}
\newcommand {\br} {\ensuremath{\mathbb{R}}}
\newcommand {\bz} {\ensuremath{\mathbb{Z}}}
\newcommand {\bt} {\ensuremath{\mathbb{T}}}
\newcommand {\bnd} {\ensuremath{\mathbb{N}^d}}
\newcommand {\brd} {\ensuremath{\mathbb{R}^d}}
\newcommand {\btd} {\ensuremath{\mathbb{T}^d}}
\newcommand {\bcd} {\ensuremath{\mathbb{C}^d}}
\newcommand {\bzd}[1] [d] {\ensuremath{\bz^{#1}}}
\newcommand {\mA} {\ensuremath{\mathcal{A}}}
\newcommand {\mD} {\ensuremath{\mathcal{D}}}
\newcommand {\mE} {\ensuremath{\mathcal{E}}}
\newcommand {\mJ} {\ensuremath{\mathcal{J}}}
{

\newcommand {\mB} {\ensuremath{\mathcal{B}}}

\newcommand {\mC} {\ensuremath{\mathcal{C}}} 

\newcommand {\mT} {\ensuremath{\mathcal{T}}}





\newcommand {\lp} [1] [p] {\ensuremath{\ell^{#1}(\bzd)}}  
\newcommand {\lpw}[2] [p] {\ensuremath{\ell^{#1}_{#2}(\bzd)}}  

\newcommand {\wlp}[2] [p] {\ensuremath{\ell_{#2}^#1}} 



\newcommand {\bopzd}  {{\ensuremath{\mathcal B ( \ell^2(\bzd))}}}
\newcommand {\bopz}  {\ensuremath{\mathcal B ( \ell^2(\bz))}}
\newcommand {\bop}  {{\ensuremath{\mathcal B ( \ell^2)}}}


\DeclareMathOperator{\diag}{Diag}

\DeclareMathOperator{\id}{\mathrm{id}}


\newcommand \BL {bandlimited}
\newcommand \tfae {the following are equivalent}
\newcommand \smult {submultiplicative}

\newcommand \cont {continuous}
\newcommand \BS {Banach space}
\newcommand \BA {Banach algebra}
\newcommand \MA {matrix algebra}

\newcommand \HMA {homogeneous matrix algebra}
\newcommand \hmg {homogeneous}

\newcommand \odd {off-diagonal decay}

\newcommand \as {approximation space}
\newcommand \IC {inverse-closed}

\newcommand {\cexp} [1] [{x t}] {\ensuremath{e^{2 \pi i #1}}}


\newcommand { \muleb} [1] {\frac{d {#1}}{#1}}

 \newcommand {\bigo}{{\mathcal{O}}}


\newtheorem{prop}{Proposition} 
\newtheorem{cor}[prop]{Corollary}
\newtheorem{thm}[prop]{Theorem}

\newtheorem{lem}[prop]{Lemma}

\theoremstyle{definition}
\newtheorem{defn}[prop]{Definition} 

\theoremstyle{remark}
\newtheorem*{rem}{Remark}

\newtheorem{ex}[prop]{Example}

\newcommand \Wlog {Without loss of generality}
 \newcommand \wwlog {without loss of generality}
\begin{document}
\title{Inverse Closed Ultradifferential Subalgebras}
\date{\today} 
\author{ Andreas Klotz }
\address{Faculty of Mathematics\\University of Vienna \\
   Nordbergstrasse 15\\A-1090 Vienna, AUSTRIA} \email{andreas.klotz@univie.ac.at} 
 \keywords{Banach algebra,  inverse closedness, spectral invariance, Carleman classes, Dales-Davie algebras, matrix algebra, off-diagonal
   decay, automorphism group} 
\subjclass{41A65, 42A10, 47B47}
\thanks{ A.~K.~ was supported by  National Research Network S106 SISE of the Austrian Science Foundation (FWF) and the FWF project P22746N13}

\begin{abstract}
In previous work we have shown that 
classical approximation theory provides methods for the systematic construction of \IC\ smooth subalgebras.
Now we extend this work to treat \IC\ subalgebras of ultradifferentiable elements. In particular, Carleman classes and Dales-Davie algebras are treated. As an application the result of Demko, Smith and Moss and Jaffard on the inverse of a matrix with exponential decay is obtained within the framework of a general theory of smoothness.
\end{abstract}
\maketitle
\section{Introduction}
\label{sec:introduction}

We describe new methods to generate a smooth \IC\ subalgebra of a given \BA\ \mA\ and to characterize this subalgebra by approximation properties and by weights.
Recall that a subalgebra \mB\ of \mA\ is \IC\ in \mA, if
\begin{center}
  every $b \in \mB$ that is invertible in \mA\ is actually invertible in \mB .
\end{center}
A prototype of an \IC\ subalgebra is the Wiener algebra of absolutely convergent Fourier series, which is \IC\ in the algebra of \cont\ functions on the torus. Another example is the algebra $C^1(\bt)$ of  \cont ly differentiable functions on the torus; the proof that $C^1(\bt)$ is \IC\ in $C(\bt)$ is essentially the  quotient rule of classical analysis.

Many methods  for the construction of \IC\ subalgebras are based on generalizations of this simple smoothness principle. In the context of  \BA s, derivatives are replaced by derivations. 
The Leibniz rule for derivations implies that their domain is a \BA, and by the symmetry of \mA\  the domain is \IC\ in \mA, see \cite{grkl10}.

A more refined concept of smoothness can be developed, if \mA\ is invariant under the bounded action of a $d$-dimensional automorphism group. In this case algebras of Bessel-Besov type can be defined, and the properties of the group action imply that the spaces defined form \IC\ subalgebras of \mA, see~\cite{klotz10a}.

A different approach to smoothness is by approximation using approximation schemes adapted to the algebra multiplication. This line of research, initiated by Almira and Luther~\cite{Almira01,Almira06}, yields \BA s of approximation spaces that are \IC\ in \mA, if \mA\ is symmetric~\cite{grkl10}. 

Moreover, if \mA\ is invariant under the action of the translation group and the approximation scheme consists of the \emph{bandlimited elements} of \mA, 
 we obtain Jackson-Bernstein theorems that identify approximation spaces of polynomial order with Besov spaces.

All of the above has been carried out in two previous publications~\cite{grkl10,klotz10a} for smoothness spaces of finite order. Now we use the same principles to construct \IC\ subalgebras of ultradifferentiable elements.

Classes of Carleman type are defined by growth conditions on the norms of higher derivations in the same way as  for functions, and we obtain a characterization of  \IC\ Carleman classes  by adapting a proof of Siddiqi~\cite{Siddiqi90}. 
If the growth of the derivations satisfies the condition (M2') of Komatsu,  then an alternative description of the Carleman classes as union of weighted spaces or approximation spaces is available.

Whereas  Carleman algebras are inductive limits of \BS s we can also define  \emph{\BA s}  of ultradifferentiable elements derived from a given \BA. The construction generalizes an approach used by Dales and Davie~\cite{dales73} for functions defined on perfect subsets of the complex plane, so we call the resulting \BA s \emph{Dales-Davie algebras}. An result of Honary and Abtahi~\cite{honary07} on \IC\ Dales-Davie algebras of functions can be adapted to the noncommutative situation (Theorem~\ref{thm-dada-is-IC}).

The general theory has applications to \BA s of matrices with  \odd. The formal commutator
$
\delta(A)= [X,A] 
$, 
$X=2 \pi i\diag((k)_{k \in \bz})$,  is a derivation on \bop, and its domain defines an algebra of matrices with \odd\ that is \IC\ in \bop~\cite[3.4]{grkl10}.
The translation group acts boundedly on \bop\ by conjugation with the modulation operator  
$
   M_t=\diag (\cexp [k \cdot t])_{k \in \bzd}
   $ ,
\begin{equation}
  \label{eq:1}
  \chi_t(A)= M_t A M_{-t} = \sum_{k \in \bzd} \hat A (k) \cexp [k \cdot t]  \, \quad \text{for  } t \in \brd \,,
\end{equation}
where  $\hat A(k)$ is the $k$th side diagonal of $A$,
\begin{equation}
    \hat A(k)(l,m)=\begin{cases}
      A(l,m),& \quad l-m=k,\\
      0,     & \quad \text{otherwise}.
    \end{cases}
 \end{equation}
In \cite{grkl10,klotz10a}  the  theory of smooth and \IC\ subalgebras has been applied to describe \BA s of matrices with \odd. 

The  approximation theoretic characterization of Carleman  Carleman classes of Gevrey type on \bop\ yields  a new proof of a result of Demko, Smith and Moss~\cite{Demko84}.
\begin{thm}
  If $A \in \bop$ with $\abs{{A(k,l)}} \leq C e^{-\gamma \abs {k-l}}$ for constants $C,\gamma >0$ and all $k,l, \in \bzd$, and if $\inv A \in \bop$, then there exist $C',\gamma'>0$ such that
  \[
\abs{\inv{A}}(k,l) \leq C' e^{-\gamma' \abs {k-l}} \quad \text{for all } k,l \in \bzd.
\]
\end{thm}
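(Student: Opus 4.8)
The plan is to identify exponential off-diagonal decay with membership in the analytic (Gevrey order one) Carleman class generated by the commuting derivations $\delta_j(A)=[X_j,A]$, $X_j=2\pi i\diag((k_j)_{k\in\bzd})$, and then to invoke the inverse-closedness of that class in \bop. The bridge between the two descriptions is the elementary identity
\[
 \delta^\alpha(A)(k,l)=(2\pi i)^{|\alpha|}(k-l)^\alpha\,A(k,l),\qquad \alpha\in\bno^d,
\]
obtained by a direct computation from the diagonal form of the $X_j$, where $(k-l)^\alpha=\prod_j(k_j-l_j)^{\alpha_j}$.

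First I would carry out the forward translation. Assuming $\abs{A(k,l)}\le C e^{-\gamma\abs{k-l}}$, I estimate the operator norm of each $\delta^\alpha A$ by Schur's test through the $\ell^1$-size of its side diagonals,
\[
 \norm{\delta^\alpha A}\ls (2\pi)^{|\alpha|}C\sum_{m\in\bzd}\abs{m^\alpha}\,e^{-\gamma\abs{m}}.
\]
Factoring the sum over coordinates and using the one-variable estimate $\sum_{n\in\bz}\abs{n}^p e^{-\gamma\abs{n}}\ls (c/\gamma)^p\,p!$ gives $\norm{\delta^\alpha A}\le C_0 R^{|\alpha|}\alpha!$, so $A$ lies in the analytic Carleman class $\mC$. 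The sequence $M_n=n!$ is logarithmically convex and satisfies Komatsu's condition (M2$'$), so the characterization of \IC\ Carleman classes developed above applies: $\mC$ is \IC\ in \bop. Since $A\in\mC$ and $\inv A\in\bop$ by hypothesis, we conclude $\inv A\in\mC$, i.e. there are constants $C_1,R_1>0$ with $\norm{\delta^\alpha(\inv A)}\le C_1 R_1^{|\alpha|}\alpha!$ for all $\alpha$.

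It remains to perform the backward translation for $\inv A$. Each entry is dominated by the operator norm, $\abs{\delta^\alpha(\inv A)(k,l)}\le\norm{\delta^\alpha(\inv A)}$, so choosing $\alpha$ supported on a single coordinate $j$ with $\abs{k_j-l_j}$ maximal yields, with $t=\abs{k_j-l_j}$ and $\rho=R_1/2\pi$,
\[
 \abs{\inv A(k,l)}\le C_1\,\inf_{n\in\bn}\, n!\,(\rho/t)^n\ls C_1\, e^{-t/\rho}.
\]
Since $\abs{k_j-l_j}\gs\abs{k-l}$ for the maximal coordinate, this is the claimed bound $\abs{\inv A(k,l)}\le C' e^{-\gamma'\abs{k-l}}$ with $\gamma'\sim 2\pi/R_1$; the finitely many pairs with small $\abs{k-l}$ are absorbed into $C'$ using $\abs{\inv A(k,l)}\le\norm{\inv A}$.

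The genuinely new content—the inverse-closedness of the analytic Carleman class—is supplied by the paper's general theory, so within this argument the only point requiring care is the two-sided equivalence between exponential decay and analytic growth of $\norm{\delta^\alpha A}$. Concretely, the work is the summation estimate $\sum_n\abs{n}^p e^{-\gamma\abs{n}}\ls (c/\gamma)^p p!$ in the forward direction, the Stirling optimization over the differentiation order $n$ in the backward direction, and the consistent multi-index bookkeeping that lets one pass between $\abs{m^\alpha}$ and $\abs{m}^{|\alpha|}$ and between $\alpha!$ and $\abs{\alpha}!$. I note that the same conclusion can be reached through the approximation-theoretic characterization of Gevrey classes on \bop, identifying exponential decay with the analytic rate of approximation by \bms.
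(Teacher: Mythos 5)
Your proof is correct, and its skeleton coincides with the paper's: both identify exponential off-diagonal decay with membership in the analytic Carleman (Gevrey) class over \bop\ generated by the derivations $\delta_j(A)=[X_j,A]$, and both then invoke Theorem~\ref{cha:gener-carl-classthm-carleman-IC} as the sole non-elementary ingredient. Where you diverge is the bridge. The paper routes through the general machinery of Section~\ref{sec:conn-with-weights}: Lemma~\ref{proposition:weightinclusion} and Proposition~\ref{proposition:carlemanisweighted.} identify $C_M(\bop)$, for $M$ satisfying (M2'), with the union $\bigcup_{r>0}\mC^1_{T_{r,M}}(\bop)=\bigcup_{r>0}\mC^\infty_{T_{r,M}}(\bop)$, and since for \bop\ (a solid algebra) $\norm{\hat A(k)}_{\bop}=\sup_l\abs{A(l,l-k)}$ while $T_M(u)\sim e^{cu}$ for $M_k=k!$, exponential entrywise decay is literally membership in these weighted spaces. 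You instead prove the two inclusions by hand for the specific sequence $k!$: the Schur-test summation $\sum_n \abs{n}^p e^{-\gamma\abs n}\ls(c/\gamma)^p p!$ is exactly the content of $\mC^1_{T_{r,M}}\subseteq C_{r,M}$, and your Stirling optimization $\inf_n n!(\rho/t)^n \ls e^{-\gamma' t}$ is the computation of $1/T_M(t/\rho)$ hidden in $C_{r,M}\subseteq\mC^\infty_{T_{r,M}}$. Your version buys self-containedness --- notably, you never actually need (M2') or the approximation-space leg of Proposition~\ref{proposition:carlemanisweighted.} --- while the paper's general proposition immediately yields the stronger corollary for sub-exponential decay $e^{-\gamma\abs{k-l}^r}$, $0<r\le 1$, via the Gevrey classes $\mJ_{1/r}$, where the associated function $T_M(u)\sim\exp(cu^r)$ does the same bookkeeping uniformly in $r$. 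One small misattribution: (M2') is not what licenses inverse-closedness; Theorem~\ref{cha:gener-carl-classthm-carleman-IC} requires $C_M(\mA)=C_{M^c}(\mA)$ and that $(M_k/k!)^{1/k}$ be almost increasing, both of which hold trivially for $M_k=k!$ (log-convex, and $M_k/k!\equiv 1$), so your conclusion stands but the cited hypothesis should be corrected.
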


In some instances, Dales-Davie algebras of matrices can be identified with known \BA s of matrices, e.g.  if $\mC^1_{v_0}$  consists of matrices with norm
 \[
 \norm{A}_{\mC^1_{v_0}} =  \sum_{k \in \bzd} \sum_{l \in \bzd}\abs{A(l,l-k)} \,,
\]
 then  $\dadas 1 M {\mC^1_{v_0}}$ is a weighted form of this algebra for a submultiplicative weight $v_M$ associated to $M$, see Section~\ref{sec:dales-davie-classes}.
\\

The organization of the paper is as follows. First we  recall some facts from the theory of \BA s and review results of \cite{grkl10,klotz10a} on \IC\ subalgebras of a given \BA\ defined by derivations, automorphism groups, and approximation spaces.
In Section~\ref{sec:cinfty-ultr-class}, after treating $C^\infty$ classes,  ultradifferentiable classes of Carleman type are introduced, and necessary and sufficient conditions on their \IC ness are given.  Carleman classes satisfying axiom (M2') of Komatsu are characterized by approximation and weight conditions. As an application we generalize the result of Demko~\cite{Demko84} on the inverses of matrices with exponential \odd.
The results on the \IC ness of Dales-Davie algebras are treated in Section~\ref{sec:dales-davie-classes}.
In Section~\ref{sec:matrix-algebras-with} some applications to \MA s with \odd\ are given.
  In the appendix a combinatorial Lemma on the iterated quotient rule is proved.

\emph{Acknowledgment}: The author wants to thank Karlheinz Gr\"ochenig for many helpful discussions.

\section{Preliminaries}
\label{sec:preliminaries}
\subsection{Notation}
 \label{sec:notation}
The cardinality of a finite set $A$ is  $\abs A$.
The \emph{d-dimensional torus} is $\btd =\brd / \bzd $. 
The symbol $\floor x$ denotes the greatest integer smaller or equal to the real number $x$.
 Positive constants will be denoted by $C$, $C'$,$C_1$,$c$, etc., where the same symbol might denote different constants in each equation.

We use the standard multi-index notation. Multi-indices are denoted by Greek letters and are  a $d$-tuples of nonnegative integers. The degree of 
$x^\alpha=x_1^{\alpha_1}\cdots x_d^{\alpha_d}$ 
 is $\abs{\alpha} = \sum _{j=1}^d \alpha _j$, and $ D^\alpha f(x)= {\partial_1^{\alpha_1}}\cdots
{\partial_d^{\alpha_d}} f (x) $ is the partial derivative. The inequality  $\beta \leq \alpha $ means that $\beta _j \leq \alpha _j$ for all indices $j$.
 The $p$-norm on $\bcd$ is denoted by $\abs {x}_p=\bigl(\sum_{k=1}^d\abs {x(k)}^p \bigr)^{1/p}$

A submultiplicative weight   on $\bzd$ is a positive function $v:\bzd \to \br$ such that $v(0)=1$
and 
$v(x+y) \leq v(x)v(y)$ for $x,y \in \bzd$.  The standard  polynomial weights are
$v_r(x) = (1+\abs x)^r$
for $r \geq 0$. The weighted spaces $\lpw [p] w$ are defined by  the norm 
$\norm{x}_{\lpw [p]  w}=\norm{x w}_{\lp}$. If $w=v_r$ we  will simply write $\norm{x}_{\lpw [p]  r}$.
  A weight $w$ on \bzd\ satisfies the \emph{Gelfand, Raikov, Shilov (GRS)-condition} if
  $
  \lim_{n \to \infty} w(n \, x)^{1/n}=1$ for all $x \in \bzd$.

The \cont\ embedding of the normed space $X$  into  the normed space $Y$ is denoted as $X \inject Y$.
The operator norm of a bounded linear mapping $A \colon X \to Y$  is  $\norm{A}_{X \to Y}$. 
In the special case of operators $A \colon \lp [2] \to \lp [2]$ we write $\norm{A}_{\bopzd}=\norm{A}_{\lp [2] \to \lp
  [2]}$ or simply $\norm{A}_{\bop}$.

We will consider Banach spaces with equivalent norms as equal.

\subsection{Inverse closed  Banach algebras }
\label{sec:inverse-clos-matr}
All \BA s are assumed to be \emph{unital}. To verify that a \BS\ \mA\ with norm $\norm{\phantom{i}}_\mA$ is a \BA\ it is sufficient to
prove that $\norm{ab}_\mA \leq C \norm{a}_\mA \norm{b}_\mA$ for some constant $C$. 
A \BA\ \mA\ is a (Banach) $*$-algebra if it has an isometric  involution $*$, i.e., $\norm{a^*}_\mA=\norm{a}_\mA$ for all $a \in \mA$.
The Banach $*$-algebra $\mA$ is \emph{symmetric}, if 
$
\sigma_{\mA } (a^*a)\subseteq [0,\infty)
$ for all $a \in\mA$, where $\sigma_\mA(a)$ denotes the spectrum of $a \in \mA$. 
The spectral radius of $a \in \mA$ is $\rho_\mA(a)=\sup\set{\abs \lambda : \lambda \in \sigma_\mA(a)}$.

\begin{defn}[Inverse-closedness]
  If $\mA \subseteq \mB$ are  Banach algebras with common multiplication and identity, we call \mA\ \emph{inverse-closed}
  in \mB, if
  \begin{equation} \label{eq_3} a \in \mA \text{ and } a^{-1} \in \mB \quad \text{implies}\quad a^{-1} \in \mA.
  \end{equation}
\end{defn}
The relation of \IC ness is transitive: If \mA\ is \IC\ in \mB\, and \mB\ is \IC\ in \mC, then \mA\ is \IC\ in \mC.

\subsection{Derivations}
\label{sec:derivations}

  A \emph{derivation} $\delta$ on a \BA\ \mA\ with \emph{domain}
  $\mD=\mD(\delta)=\mD(\delta,\mA)$  a subspace of \mA\ is a closed linear
  mapping $\delta \colon \mD \to \mA$ that satisfies the Leibniz rule
  \begin{equation}
    \label{eq:derivation}
    \delta(ab) =a \delta(b) + \delta(a) b \qquad \text{for all} \,\,
    a,b \in \mD. 
  \end{equation}
  If \mA\ is a $*$-algebra, we assume that the derivation and
  the domain are symmetric, i.e., $\mD=\mD^*$ and $\delta(a^*)
  =\delta(a)^*$ for all $a \in \mD$. The domain is normed with the
  graph norm $\norm{a}_{\mD}=\norm{a}_\mA+\norm{\delta(a)}_\mA$.

Assume that  \mA\ is a symmetric \BA\ with
  a symmetric  derivation  $\delta$. 
  If $\one \in \domd$, then the (symmetric) \BA\ $\domd$ is \IC\ in \mA. Moreover, $\delta$ satisfies the quotient rule
  $
  \delta(a^{-1})=- a^{-1}\delta(a)a^{-1} 
  $, see ~\cite{grkl10}.

In more generality, let $\{\delta_1,\cdots, \delta_d\}$ be a set of commuting derivations
on  \mA.
The domain of $\delta _{r_1} \delta _{r_2} \dots
\delta _{r_n}$, $1\leq r_j \leq d$ is defined by induction as 
$
\mD (\delta _{r _1} \delta _{r _2} 
\dots \delta _{r _n}) = \mD (\delta _{r _1}, \mD (\delta _{r _2} 
\dots \delta _{r _n})) \, .
$
For every multi-index $\alpha$ the operator $\delta^\alpha
=\prod_{1 \leq k \leq d}\delta_k^{\alpha_k}$ and 
its domain $\mD(\delta^\alpha)$ are well defined.
  In analogy to $C^\alpha(\brd)$ we equip $\mD(\delta^\alpha)$ with the norm
$$
\norm{a}_{\mD(\delta^\alpha)} =  \sum_{ \beta \leq 
  \alpha}\norm{\delta^\beta(a)}_\mA\, .$$
Since $\delta _j$ is assumed to be a closed operator on $\mA $, it
follows that $\delta^\alpha$ is a closed operator on $\mD (\delta ^\alpha
)$.

If \mA\ is symmetric and  $\one \in \mD(\delta_k), 1\leq k \leq d$, 
    then $\mD(\delta ^\alpha )$ is \IC\ in \mA.  Furthermore, the \BA\ 
$\mA^{(k)}=\bigcap_{\abs{\alpha} \leq k} \mD(\delta^\alpha)$ and the Fr\'{e}chet algebra $\mA^{(\infty)}=C^\infty(\mA)=\bigcap_{k=0}^\infty \mA^{(k)}$ are \IC\ in \mA ~\cite[3.7]{grkl10}.
   
    \subsection{Automorphism Groups}
    \label{sec:automorphism-groups}

A ($d$-parameter) \emph{automorphism group}
 acting on the \BA\ \mA\   is a set of \BA\ automorphisms $\Psi=\set{\psi_t}_{t \in \brd}$ of
\mA\ that satisfy 
$
  \psi_s \psi_t=\psi_{s+t} \quad \text{for all} \quad s,t \in \br^d
$
and are uniformly bounded, i.e. 
$
M_\Psi= \sup_{t\in \brd}\norm{\psi_t}_{\mA \to \mA} < \infty \, .
$
If \mA\ is a $*$-algebra we assume that $\Psi$ consists of $*$-automorphisms.  

We call $a \in \mA$ \cont\ and write $a \in C(\mA)$, if $\lim_{t \to 0}\psi_t(a)=a$.

For $t \in \brd\setminus\set{0}$ the \emph{generator} $\delta_t$, defined by
$
  \delta_t (a) = \lim_{h \to 0} \frac{\psi_{ht}(a)-a }{h}
$ 
is a closed derivation, and
the domain $\mD(\delta_t, \mA)$ of $\delta_t$ is the set of all $a \in \mA$ for which this limit exists. 
If \mA\ is a $*$-algebra, then $\delta_t$ is symmetric.

The action of $\Psi$ is periodic, if $\psi_t = \psi_{t+e_j}$ for all $t \in \brd$ and all $1 \leq j \leq d$.
If the action of $\Psi$ is periodic, we can define Fourier coefficients of $a \in C(\mA)$ by
\[
\hat a (k) = \int_{\btd} \psi_t(A) e^{-2 \pi i k \cdot t} \, dt
\]

With the group action $\Psi$ it is possible to define the classical smoothness spaces, see, e.g.~\cite{Butzer68}. We need the Besov spaces that are  defined, using the difference operators  $\Delta_t^k=  (\psi_t  -\id)^k$,  by the norm
\[
\norm{a}_{\besov p r\mA} =  \norm{a}_\mA + \Bigl(\int_{\brd} (\abs{t}^{-r}\norm{\Delta^k_t a}_\mA)^p \frac{dt }{\abs t ^d}\Bigr)^{1/p}
\]
for $ 1\leq p \leq \infty$ (standard change for $p=\infty$), $r>0$ and the integer $k > r$ (every choice of $k$ yields an equivalent norm). Algebra properties of $\besov p r \mA$ are discussed in~\cite{klotz10a}. In particular, $\besov p r \mA$ is \IC\ in \mA\ for all $1 \leq p \leq \infty$ and all $r>0$, see~\cite[3.8]{klotz10a}.

In a similar spirit Bessel potential spaces are  introduced and it can be shown that they form \IC\ subalgebras of \mA~\cite{klotz10a}. 
\subsection{Approximation Spaces}
\label{sec:approximation-spaces}
 An \emph{approximation scheme} on the \BA\ \mA\
is a family $(X_n)_{n \in \bn_0}$ of closed subspaces of \mA\ that satisfy
$ X_0=\set{0}$, $ X_n \subseteq X_m$  for $n \leq m$,  and
$  X_n \cdot X_m\subseteq X_{n+m}$, $n,m\in \bn_0$.
If \mA\ is a $*$-algebra, we  assume that
$ \one \in X_1$  and $ X_n=X^*_n$ for all $ n \in \bn_0$.
The \emph{$n$-th approximation error} of  $a \in \mA$ by $X_n$ is
$  E_n(a)=\inf_{x \in X_n} \norm{a-x}_\mA$.
For  $1\leq p < \infty $ and $w$ a weight on $\bno$ the approximation space $\app p w  \mA$  consists of all $a \in \mA$ for which the norm 
\begin{equation}
  \label{eq:appspace}
  \norm{a}_{\mE_w^p}= \bigl(\sum_{k=0}^\infty {E_k(a)^p}w(k)^{p} \bigr)^{1/p}
 \end{equation}
is finite (standard change for $p=\infty$). 
If $w$ is a standard polynomial weight, $w=v_r$ for some $r>0$, then in order to remain consistent with the existing literature we define $\app p r \mA =\app p {v_{r-1/p}} \mA$.

Algebra properties of \as s are discussed in~\cite{Almira06,grkl10}. In particular, in~\cite{grkl10} the following result is proved.
\begin{prop} \label{appspIC} If \mA\ is a symmetric \BA\ with approximation scheme $(X_n)_{ n \in \bn_0}$ then $\mE_r^p(\mA)$ is \IC\ in $\mA$.
\end{prop}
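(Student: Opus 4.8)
The plan is to exhibit $\app p r \mA$ as a \bstar\ in its own right and then to deduce \IC ness from a single Bernstein-type inequality via Hulanicki's spectral criterion. Since \mA\ is symmetric, it suffices to treat a self-adjoint invertible element: for invertible $a$ the element $a^*a$ is self-adjoint with $\sigma_\mA(a^*a)\subseteq(0,\infty)$, and $a^{-1}=(a^*a)^{-1}a^*$ lands in the subalgebra as soon as $(a^*a)^{-1}$ does.

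First I would record the structure of $\app p r \mA$. The product property $X_n\cdot X_m\subseteq X_{n+m}$ makes the space an algebra, and because the involution on \mA\ is isometric and $X_n=X_n^*$, replacing an approximant $x\in X_n$ by $x^*$ gives $E_n(a^*)=E_n(a)$; hence the involution is isometric on $\app p r \mA$, which is therefore a \bstar\ continuously and densely embedded in \mA\ (the algebra estimates are those of~\cite{Almira06,grkl10}). As $w(0)=1$, the term $k=0$ in~\eqref{eq:appspace} already yields $\norm{a}_\mA\le\norm{a}_{\mE_r^p}$, and passing to powers gives $\rho_\mA(a)\le\rho_{\mE_r^p}(a)$; the real work is the reverse inequality on self-adjoint elements.

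The engine is a Jackson-type product estimate for the errors. If $x\in X_m$ and $y\in X_n$ nearly realize $E_m(a)$ and $E_n(b)$, then $xy\in X_{m+n}$ and $\norm{ab-xy}_\mA\le\norm{a-x}_\mA\norm{b}_\mA+\norm{x}_\mA\norm{b-y}_\mA$, so that $E_{m+n}(ab)\ls\norm{a}_\mA E_n(b)+\norm{b}_\mA E_m(a)$ after using $E_k\le\norm{\cdot}_\mA$. Taking $a=b$, $m=n$ and using monotonicity of $(E_k)$ to cover odd indices gives $E_{2n}(b^2),\,E_{2n+1}(b^2)\ls\norm{b}_\mA E_n(b)$. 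Because the polynomial weight $w=v_{r-1/p}$ is doubling, $w(2n)+w(2n+1)\ls w(n)$, so splitting the series~\eqref{eq:appspace} into even and odd parts converts this into the Bernstein inequality $\norm{b^2}_{\mE_r^p}\ls\norm{b}_\mA\norm{b}_{\mE_r^p}$.

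Finally I would invoke Hulanicki's lemma. Iterating the Bernstein inequality on $b^{2^k}$ and extracting $2^k$-th roots in the spectral-radius formula forces $\rho_{\mE_r^p}(b)\le\rho_\mA(b)$, hence equality, for every self-adjoint $b$; for a \bstar\ densely embedded in a symmetric algebra this spectral-radius equality on self-adjoint elements is exactly the criterion for \IC ness, which together with the reduction above finishes the proof. The one step needing care is the passage from the pointwise error estimate to the norm inequality, i.e.\ verifying that the weight behaves well under the index doubling $k\mapsto 2k$; the concluding Hulanicki iteration is then routine.
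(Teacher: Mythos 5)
Your proposal is correct and follows essentially the same route as the paper's proof (given in~\cite{grkl10}): the Jackson-type estimate $E_{2n}(b^2)\lesssim\norm{b}_\mA E_n(b)$ combined with the doubling of the polynomial weight yields the differential-norm inequality $\norm{b^2}_{\mE_r^p}\lesssim\norm{b}_\mA\norm{b}_{\mE_r^p}$, the Brandenburg/Hulanicki iteration then gives $\rho_{\mE_r^p}=\rho_\mA$, and symmetry of \mA\ together with the reduction $a^{-1}=(a^*a)^{-1}a^*$ and a Neumann series for $(a^*a)^{-1}$ finishes the argument. One cosmetic remark: your claim that $\app p r \mA$ is \emph{densely} embedded in \mA\ is neither needed for this criterion nor true in general (the union of the $X_n$ need not be dense in \mA), but nothing in your argument actually uses it.
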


\subsubsection*{Approximation with \BL\ elements}
\label{sec:appr-with-band}
The relation between smoothness and approximation is given by the Weierstrass theorem and Jackson-Bernstein-theorems.

Given a \BA\ with automorphism group we say that $a \in \mA$ is \emph{$\sigma$-bandlimited} for $\sigma >0$, if there is a constant $C$ such that for every multi-index $\alpha$ the Bernstein inequality
  \begin{equation}
    \label{eq:bernstein}
    \norm{\delta^\alpha( a)}_\mA \leq C ( 2 \pi \sigma)^{\abs \alpha} \,
  \end{equation}
   is satisfied. An element is \emph{bandlimited}, if it is $\sigma$-bandlimited for
  some $\sigma
  >0$. 
In this case
$
      X_0=\set{0}$, $ X_n=\set{a \in \mA \colon a \text{ is } n \text{-\BL} }$,  $n \in \bn$,
is an approximation scheme for \mA\ \cite[Lemma 5.8]{grkl10}. 
  \begin{thm}[Weierstrass approximation theorem]\label{proposition:weierstrassBL}
    If $\mA$ is a \BA\  with automorphism group $\Psi$, the set of \BL\ elements is dense in $C(\mA)$.
  \end{thm}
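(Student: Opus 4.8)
The plan is to mimic the scalar Weierstrass--Fej\'er theorem by convolving the group action against a smooth, spectrally compactly supported summability kernel. Fix $\phi \in C_c^\infty(\brd)$ with $\supp \phi \subseteq \set{\abs\xi \le R}$ and $\phi(0)=1$, and let $g$ be its inverse Fourier transform; then $g$ is a Schwartz function, $\int_{\brd} g(t)\,dt = \phi(0)=1$, and $\hat g = \phi$ is supported in $\set{\abs\xi \le R}$. For $a \in C(\mA)$ and integrable $h$ on \brd\ I would define the Bochner integral
\[
  T_h(a) = \int_{\brd} h(t)\,\psi_t(a)\,dt ,
\]
which is well defined since $t \mapsto \psi_t(a)$ is continuous (because $\psi_{t+s}(a)-\psi_t(a)=\psi_t(\psi_s(a)-a)$ and $a\in C(\mA)$) and uniformly bounded, giving $\norm{T_h(a)}_\mA \le M_\Psi \norm{h}_{L^1}\norm{a}_\mA$.

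First I would show $T_g(a)$ is bandlimited. The key identity is $\delta_j \psi_t(a) = \partial_{t_j}\psi_t(a)$, immediate from the definition of the generator and the group law. Carrying this inside the integral and integrating by parts should give $T_g(a) \in \mD(\delta^\alpha)$ with $\delta^\alpha T_g(a) = (-1)^{\abs\alpha} T_{\partial^\alpha g}(a)$ for every multi-index $\alpha$. Rigorously this follows by forming the difference quotient $h^{-1}\bigl(\psi_{he_j}(T_g(a)) - T_g(a)\bigr)$, translating the kernel via the substitution $s=t+he_j$, and letting $h\to 0$ by dominated convergence (the Schwartz decay of $g$ dominates the quotients $h^{-1}(g(\cdot-he_j)-g(\cdot))$), using that $\delta_j$ is closed. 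Since $\hat g$, hence $\widehat{\partial^\alpha g}$, is supported in $\set{\abs\xi \le R}$, the classical $L^1$ Bernstein inequality yields $\norm{\partial^\alpha g}_{L^1} \le (2\pi R)^{\abs\alpha}\norm{g}_{L^1}$, so that
\[
  \norm{\delta^\alpha T_g(a)}_\mA \le M_\Psi\,\norm{\partial^\alpha g}_{L^1}\,\norm{a}_\mA \le \bigl(M_\Psi\norm{g}_{L^1}\norm{a}_\mA\bigr)(2\pi R)^{\abs\alpha},
\]
which is exactly the Bernstein inequality \eqref{eq:bernstein} with $\sigma=R$; thus $T_g(a)$ is $R$-bandlimited.

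Next I would turn $T_g$ into an approximate identity by dilation. Setting $g_\sigma(t)=\sigma^d g(\sigma t)$ gives $\int_{\brd} g_\sigma = 1$, $\norm{g_\sigma}_{L^1}=\norm{g}_{L^1}$, and $\widehat{g_\sigma}(\xi)=\hat g(\xi/\sigma)$ supported in $\set{\abs\xi\le \sigma R}$, so each $T_{g_\sigma}(a)$ is $\sigma R$-bandlimited by the previous step. Using $\int g_\sigma =1$,
\[
  T_{g_\sigma}(a) - a = \int_{\brd} g_\sigma(t)\bigl(\psi_t(a)-a\bigr)\,dt,
  \qquad
  \norm{T_{g_\sigma}(a)-a}_\mA \le \int_{\brd} \abs{g_\sigma(t)}\,\norm{\psi_t(a)-a}_\mA\,dt .
\]
Splitting the integral at $\abs t=\eta$, using continuity of $a$ on $\set{\abs t<\eta}$, the bound $\norm{\psi_t(a)-a}_\mA\le (M_\Psi+1)\norm{a}_\mA$ elsewhere, and $\int_{\abs t\ge\eta}\abs{g_\sigma}\to 0$ as $\sigma\to\infty$, I obtain $T_{g_\sigma}(a)\to a$ in \mA. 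Hence every $a\in C(\mA)$ is a norm limit of the bandlimited elements $T_{g_\sigma}(a)$, which proves density.

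The main obstacle is the second step: justifying that the closed, unbounded operator $\delta^\alpha$ may be pulled inside the Bochner integral and converted into $\partial^\alpha g$. Everything else is the routine approximate-identity estimate; the genuine work is the dominated-convergence argument for the difference quotients, combined with the $L^1$ Bernstein inequality that converts the compact spectral support of $g$ into the geometric growth $(2\pi\sigma)^{\abs\alpha}$ demanded by \eqref{eq:bernstein}.
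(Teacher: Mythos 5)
Your proof is correct and follows essentially the same route as the paper's source for this theorem (the paper states it as a review of \cite{grkl10}, where the proof likewise regularizes $a$ by a Bochner integral $\int g_\sigma(t)\,\psi_t(a)\,dt$ against a dilated kernel with compactly supported Fourier transform, pulls $\delta^\alpha$ inside via difference quotients to get $(-1)^{\abs{\alpha}}T_{\partial^\alpha g_\sigma}(a)$, and invokes the $L^1$ Bernstein inequality to verify \eqref{eq:bernstein}, finishing with the standard approximate-identity estimate on $C(\mA)$). The only cosmetic remark: closedness of $\delta_j$ is not even needed, since the generator's domain is defined by existence of the difference-quotient limit, which your dominated-convergence argument establishes directly.
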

 \begin{thm}[Jackson-Bernstein-Theorem]\label{proposition:jacksonbernstein}
    Let $\mA$ be a \BA\  with automorphism group $\Psi$, and assume that $r >0$ and $1 \leq p \leq \infty$. If $(X_n)_{n \in \bn_0}$ is the approximation
    scheme of bandlimited elements, then $\besov p r \mA = \app p r \mA$. In particular
    \begin{equation}
      \label{eq:cha1}
      a \in \besov  \infty r \mA  \quad \text{if and only if}\quad   E_n(a) \leq C n^{-r} \text{  for all }n >0 \,.
    \end{equation}  
  \end{thm}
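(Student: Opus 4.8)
The plan is to prove the two estimates that together constitute the Jackson--Bernstein equivalence, and then to pass between the integral form of the Besov norm and the discrete sum defining the approximation space. The central auxiliary object is the \emph{modulus of smoothness}
\[
\omega_k(a,\delta)=\sup_{\abs t \leq \delta}\norm{\Delta_t^k a}_\mA,\qquad \delta >0,
\]
in terms of which the Besov seminorm is equivalent to $\bigl(\int_0^\infty (\delta^{-r}\omega_k(a,\delta))^p \frac{d\delta}{\delta}\bigr)^{1/p}$; this follows from the definition of $\besov p r \mA$ by writing the integral over $\brd$ in polar coordinates and using the monotonicity of $\omega_k$ in $\delta$. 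I would first record this reduction, so that the whole problem becomes the comparison of $\omega_k(a,\cdot)$ with the errors $E_n(a)$.

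For the \textbf{direct (Jackson) estimate} I would construct good \BL\ approximants directly from the group action. Since the action is periodic, one may smooth $a$ against a kernel $\phi$ on $\btd$,
\[
\psi_\phi(a)=\int_{\btd}\phi(t)\,\psi_t(a)\,dt,
\]
and choose $\phi$ of Jackson--de~la~Vall\'{e}e~Poussin type, so that $\psi_\phi(a)$ is $\sigma$-\BL\ (its Fourier coefficients $\widehat{\psi_\phi(a)}(k)$ vanish for $\abs k$ large, which yields the Bernstein inequality \eqref{eq:bernstein} with the bandwidth of $\phi$) while the difference $a-\psi_\phi(a)$ is controlled by the modulus of smoothness. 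Rewriting $a-\psi_\phi(a)$ as an average of the differences $\Delta_t^k a$ against a kernel with enough vanishing moments then gives $E_n(a)\ls \omega_k(a,1/n)$. Inserting this into the weighted sum and comparing with the integral characterisation yields $\besov p r \mA \subseteq \app p r \mA$.

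For the \textbf{inverse (Bernstein) estimate} I would argue by telescoping over dyadic blocks. Pick near-best approximants $a_n\in X_n$ with $\norm{a-a_n}_\mA\ls E_n(a)$, and write $a=a_1+\sum_{j\geq 0}(a_{2^{j+1}}-a_{2^j})$, which converges since $E_n(a)\to 0$ by the Weierstrass theorem (Theorem~\ref{proposition:weierstrassBL}) and the density of the \BL\ elements. Each block $b_j=a_{2^{j+1}}-a_{2^j}\in X_{2^{j+1}}$ is \BL, hence satisfies \eqref{eq:bernstein}; estimating $\norm{\Delta_t^k b_j}_\mA$ by $\min\{2^k,(2\pi 2^{j+1}\abs t)^k\}\norm{b_j}_\mA$ and noting $\norm{b_j}_\mA\ls E_{2^j}(a)$, I would sum the resulting geometric series in $j$ for each fixed $\abs t$ and integrate in $t$. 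A discrete Hardy inequality then bounds the Besov seminorm by the weighted $\ell^p$ sum of the $E_{2^j}(a)$, giving $\app p r \mA \subseteq \besov p r \mA$.

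It remains to match the weights and to handle the passage between the dyadic sum, the full sum over $\bno$, and the integral. Here I would use the monotonicity of $n\mapsto E_n(a)$ together with summation by parts to see that $\bigl(\sum_j (2^{jr}E_{2^j}(a))^p\bigr)^{1/p}$, the sum $\bigl(\sum_k (k^{r}E_k(a))^p k^{-1}\bigr)^{1/p}$ appearing in $\norm{a}_{\app p {v_{r-1/p}}\mA}$ (up to the $\norm a_\mA$ term), and $\bigl(\int_0^\infty(\delta^{-r}\omega_k(a,\delta))^p\frac{d\delta}{\delta}\bigr)^{1/p}$ are pairwise equivalent, which identifies $\app p r \mA=\app p {v_{r-1/p}}\mA$ with $\besov p r \mA$; the case $p=\infty$ specialises to the stated characterisation $E_n(a)\ls n^{-r}$. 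The main obstacle is the Jackson step: unlike the inverse direction, where the Bernstein inequality is built into the very definition of \BL\ elements, the direct estimate requires actually producing bandlimited approximants from the abstract group action and verifying both that they lie in $X_n$ and that the error is governed by $\omega_k$. The periodicity of $\Psi$ and the Fourier-coefficient calculus are exactly what make this construction available.
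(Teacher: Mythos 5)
The paper itself does not prove Theorem~\ref{proposition:jacksonbernstein}: it is recalled from the earlier work \cite{grkl10}, and your outline reconstructs the architecture of the proof given there — de~la~Vall\'ee~Poussin-type smoothing of the orbit for the Jackson estimate, the Bernstein inequality plus dyadic telescoping and a Hardy-type inequality for the inverse estimate, and the standard equivalences between the dyadic sum over $2^j$, the weighted sum over $\bno$ (with the $v_{r-1/p}$ convention), and the integral norm via the modulus of smoothness $\omega_k$. That said, two of your steps have genuine gaps as written.

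First, you invoke periodicity of $\Psi$ in the Jackson step (``one may smooth $a$ against a kernel $\phi$ on $\btd$''), but the theorem assumes only a uniformly bounded $d$-parameter automorphism group; periodicity is a separate hypothesis in this paper, introduced later to define Fourier coefficients, and nothing in the statement provides it. The correct construction smooths against a kernel on $\brd$: take $\phi \in L^1(\brd)$ with compactly supported Fourier transform (a de~la~Vall\'ee~Poussin mean) and set $\psi_\phi(a)=\int_{\brd}\phi(t)\psi_t(a)\,dt$; this requires the vector integral to make sense, so you should also record that finiteness of the Besov norm forces $\omega_k(a,\delta)\to 0$ and hence $a \in C(\mA)$. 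The Bernstein inequality for $\psi_\phi(a)$ then follows from the scalar inequality $\norm{D^\alpha \phi}_{L^1(\brd)} \leq (2\pi\sigma)^{\abs\alpha}\norm{\phi}_{L^1(\brd)}$ for \BL\ $\phi$, not from vanishing of Fourier coefficients on $\btd$. Second, in the inverse estimate you use $\norm{\Delta_t^k b_j}_\mA \leq (2\pi 2^{j+1}\abs t)^k\norm{b_j}_\mA$, i.e.\ a Bernstein inequality with constant \emph{proportional to} $\norm{b_j}_\mA$. But the definition \eqref{eq:bernstein} of a $\sigma$-\BL\ element only provides \emph{some} constant $C$ depending on $b_j$, with no stated relation to $\norm{b_j}_\mA$. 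The norm-controlled form $\norm{\delta^\alpha b}_\mA \leq C\,M_\Psi (2\pi\sigma)^{\abs\alpha}\norm{b}_\mA$ for all $\sigma$-\BL\ $b$ is a nontrivial lemma (proved in \cite{grkl10} by showing that the Bernstein condition localizes the spectrum of $b$, so that $b=\psi_\phi(b)$ for a kernel $\phi$ of bandwidth comparable to $\sigma$, and then differentiating under the integral); without it, the telescoping sum cannot be dominated by the $E_{2^j}(a)$. A minor further point: the trivial bound on $\norm{\Delta_t^k b_j}_\mA$ should be $(1+M_\Psi)^k\norm{b_j}_\mA$ rather than $2^k\norm{b_j}_\mA$, since $\Psi$ is only uniformly bounded, not isometric. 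With these repairs — replacing the periodic smoothing by the $\brd$-kernel construction and supplying the norm-controlled Bernstein lemma — your argument is complete; the remaining ingredients (polar-coordinate reduction to $\omega_k$, the vanishing-moment rewriting of $a-\psi_\phi(a)$, monotonicity of $E_n$, and the discrete Hardy inequality) are standard and correctly deployed.
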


\section{Algebras of $C^\infty$ and Ultradifferentiable Elements}
\label{sec:cinfty-ultr-class}

\subsection{$C^\infty$ class}
\label{sec:cinfty-class}
As in the scalar case, elements in a \BA\ with automorphism group that have derivations of all orders can be characterized by approximation properties.
\begin{prop}
  \label{cinfchar}
  If $\mA$ is a \BA\ with automorphism group $\Psi$, and 
 $(X_n)_{n \in \bn_0}$ is the approximation scheme that consists of the \BL\ elements of \mA, then $a \in C^\infty(\mA)$ if and only if for all $r>0$ 
$ 
\lim_{k \to \infty}E_k(a) k^{r} =0 \,.\label{eq:appinforder}
$ 
If the action of $\Psi$ is periodic, this is further equivalent to
$ 
\lim_{\abs k \to \infty}\norm{\hat a (k)}_\mA \abs k ^{r} =0\label{eq:fourdecay}
$ 
for all $r>0$.
\end{prop}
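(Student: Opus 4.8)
The plan is to reduce the whole statement to the Jackson--Bernstein theorem (Theorem~\ref{proposition:jacksonbernstein}) together with the identification $C^\infty(\mA)=\bigcap_{r>0}\besov \infty r \mA$. Everything rests on two embeddings between the differentiability classes $\mA^{(k)}=\bigcap_{\abs\alpha\le k}\mD(\delta^\alpha)$ and the Besov (Hölder--Zygmund) classes, which hold in this abstract setting just as in the scalar case:
\begin{equation*}
\besov \infty r \mA \inject \mA^{(k)}\quad (r>k),\qquad \mA^{(k)}\inject \besov \infty r \mA\quad (r<k).
\end{equation*}
The first I would obtain by a Bernstein--telescoping argument: pick approximants $p_n\in X_n$ with $\norm{a-p_n}_\mA\le 2E_n(a)$, write $a=p_1+\sum_j(p_{2^{j+1}}-p_{2^j})$, and apply the Bernstein inequality~\eqref{eq:bernstein} termwise. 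Since $\norm{p_{2^{j+1}}-p_{2^j}}_\mA\ls E_{2^j}(a)\ls 2^{-rj}$ by Theorem~\ref{proposition:jacksonbernstein}, the series $\sum_j\delta^\alpha(p_{2^{j+1}}-p_{2^j})$ has terms bounded by $2^{j(\abs\alpha-r)}$ and converges for $\abs\alpha\le k<r$; closedness of $\delta^\alpha$ then gives $a\in\mD(\delta^\alpha)$. The second embedding is the Jackson direction: using $\psi_t-\id=\int_0^1\delta_t\psi_{st}\,ds$ and iterating $N$ times with $N>r$, one estimates $\norm{\Delta_t^N a}_\mA\ls\abs{t}^N$ for $a\in\mA^{(N)}$, which is precisely the defining bound of $\besov \infty r \mA$ for $r<N$.

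For $(1)\Leftrightarrow(2)$: if $a\in C^\infty(\mA)$ then $a\in\mA^{(k)}$ for every $k$, so $a\in\besov \infty r \mA$ for every $r$ by the second embedding, whence $E_n(a)\le C_r n^{-r}$ for all $r$ by~\eqref{eq:cha1}. Replacing $r$ by $r+1$ gives $E_n(a)n^r\le C_{r+1}n^{-1}\to0$, which is~(2). Conversely, if $E_k(a)k^r\to0$ for all $r$, then for each fixed $m$ choose $r>m$; the bound $E_n(a)\le Cn^{-r}$ places $a\in\besov \infty r \mA$ by~\eqref{eq:cha1}, and the first embedding gives $a\in\mA^{(m)}$. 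As $m$ is arbitrary, $a\in\bigcap_m\mA^{(m)}=C^\infty(\mA)$.

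For the periodic case $(2)\Leftrightarrow(3)$ I would use two facts about $\hat a(k)=\int_{\btd}\psi_t(a)e^{-2\pi ik\cdot t}\,dt$. First, the coefficient map is a contraction, $\norm{\hat a(k)}_\mA\le M_\Psi\norm a_\mA$, and a band-$n$ element $p$ has $\hat p(k)=0$ for $\abs{k}_\infty>n$: from $\widehat{\delta^\alpha p}(k)=(2\pi ik)^\alpha\hat p(k)$ and~\eqref{eq:bernstein} one gets $\abs{k^\alpha}\norm{\hat p(k)}_\mA\le Cn^{\abs\alpha}$, and letting $\abs\alpha\to\infty$ along a coordinate with $\abs{k_j}>n$ forces $\hat p(k)=0$. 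Applying this to $p_n\in X_n$ gives $\norm{\hat a(k)}_\mA=\norm{\widehat{(a-p_n)}(k)}_\mA\le M_\Psi E_n(a)$ once $\abs{k}_\infty>n$, so with $n=\abs{k}_\infty-1$ condition~(2) yields~(3). Conversely, each mode satisfies $\psi_s(\hat a(k))=e^{2\pi ik\cdot s}\hat a(k)$, hence $\delta^\alpha\hat a(k)=(2\pi ik)^\alpha\hat a(k)$ and $\hat a(k)$ is $\abs k$-bandlimited, so $\sum_{\abs k\le n}\hat a(k)\in X_n$. If~(3) holds then $\sum_k\norm{\hat a(k)}_\mA<\infty$, the series $\sum_k\hat a(k)$ converges absolutely to an element with the same Fourier coefficients as $a$, hence equals $a$ by uniqueness, and $E_n(a)\le\sum_{\abs k>n}\norm{\hat a(k)}_\mA\ls n^{-(r-d)}$ for every $r>d$, giving~(2).

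I expect the main obstacle to be the two abstract embeddings, and in particular the Jackson direction $\mA^{(k)}\inject\besov \infty r \mA$: converting the integral representation of $\psi_t-\id$ through the generators into the difference bound $\norm{\Delta_t^N a}_\mA\ls\abs{t}^N$ requires justifying the vector-valued integration and the iteration inside the domains $\mD(\delta^\alpha)$. In the periodic case the delicate point is instead the identification $a=\sum_k\hat a(k)$, i.e.\ injectivity of the Fourier transform on $C(\mA)$, which I would ground in the Weierstrass theorem (Theorem~\ref{proposition:weierstrassBL}) and the convergence of Fejér means.
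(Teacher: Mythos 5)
Your proposal is correct and follows essentially the same route as the paper: the equivalence of membership in $C^\infty(\mA)$ with $E_k(a)k^r\to0$ is obtained from the inclusions between the Besov scale and the classes $\mA^{(k)}$ combined with the Jackson--Bernstein identification \eqref{eq:cha1} (the paper simply cites these inclusions to \cite{Butzer67,klotz10a}, where you re-derive them by telescoping with Bernstein's inequality and by the integral representation of $\psi_t-\id$), and the periodic case rests on the same key estimate $\norm{\hat a(k)}_\mA\leq C\,E_{\abs k_\infty}(a)$, which the paper gets from \eqref{eq:fourcoeffvsapp} and you get by proving directly that bandlimited elements have vanishing high Fourier modes. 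The only structural deviation is the final implication, where the paper closes the cycle as $(3)\Rightarrow(1)$ by summing the differentiated Fourier series and invoking closedness of the $\delta_j$, while you go $(3)\Rightarrow(2)$ by using the partial sums $\sum_{\abs l_\infty\leq n}\hat a(l)$ as bandlimited approximants; both variants hinge on the inversion $a=\sum_k\hat a(k)$ on $C(\mA)$, which you justify explicitly via Fej\'er means and uniqueness, whereas the paper uses it tacitly.
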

\begin{proof}
  The proof works as for the scalar case. If $a \in C^\infty(\mA)$, then 
$a \in \besov \infty {r+1} \mA$ for any $r>0$ by the properties of Besov spaces~\cite{Butzer67,klotz10a}. Using Proposition~\ref{proposition:jacksonbernstein} we conclude that
$E_k(a) k^{r+1} \leq C$, and $E_k(a) k^{r} \to 0$ for $k \to \infty$. For the other inclusion observe that \eqref{eq:cha1} implies $a \in \besov \infty r \mA$, and further $\delta^\alpha a \in\mA$ for all $\alpha$ with $\abs \alpha = \floor r$, again by the inclusion relations of Besov spaces~\cite{Butzer67,klotz10a}.

If the action of $\Psi$ is periodic, we  use that
for all $b \in X_{\abs k _\infty}$ 
\begin{equation}
  \label{eq:fourcoeffvsapp}
  \hat a (k) = \int_{\btd} (\psi_t(a) -\psi_t(b)) e^ {-2 \pi i k \cdot t}\, dt \,,
\end{equation}
and so 
$
\norm{\hat a (k)}_\mA \leq C \norm{a-b}_\mA 
$.
The infimum of the norm over all $b \in X_{\abs k_\infty}$ yields
 \begin{equation}
  \label{eq:fourierapp}
  \norm{\hat a (k)}_\mA \leq C E_{\abs k_\infty}(a) \, ,
\end{equation}
and so $E_k(a) k^{r} \to 0$ implies $\norm{\hat a(k)}_\mA k^{r} \to 0$. If we assume $\norm{\hat a(k)}_\mA k^{r} \to 0$ for all $r>0$ then 
$\sum_{k \in \bzd}(2 \pi i)^k \hat a(k)$ 
converges in the norm of $\mA$  to $\delta^\alpha(a)$ for all multi-indices $\alpha$, as each $\delta_j$ is closed in $\mD(\delta^\alpha)$, and $a \in C^\infty(\mA)$.
\end{proof}
\subsection{Carleman Classes}
\label{sec:carleman-classes-1}
\begin{defn}[cf. \cite{gorbachuk91,gorbachuk89}] \label{def:carleman}
  Let \mA\ be a \BA\ with commuting derivations $\delta_1,\cdots,\delta_d$, and let  $M=\set{M_k}_{k \in \bn_0}$  be a sequence of positive numbers with $M_0=1$. 
For each $r>0$ we say that $a \in \mA$ is in the Banach space  $C_{r,M}(\mA)$, if the norm
\[ 
\norm{a}_{C_{r,M}(\mA)}= \sup_{\alpha \in \bnd_0}\frac{\norm{\delta^\alpha(a)}_\mA}{r^{\abs\alpha} M_{\abs \alpha}} 
\]
is finite.
The \emph{Carleman} Class $C_M(\mA)$  is the union of the spaces $C_{r,M}(\mA)$,
  \begin{equation*}
    C_M(\mA)= \bigcup_{r>0}C_{r.M}(\mA) \,
  \end{equation*}
with the inductive limit topology. Call $M$ the defining sequence of $C_M(\mA)$.

If $\mA = \bigcap_{j=1}^d \ker \delta_j$ we call $C_M(\mA)$ \emph{trivial}, otherwise $C_M(\mA)$ is \emph{nontrivial}.
\end{defn}
\begin{ex} %
 If $M_k=1$ for all $k$, then $C_{ 2 \pi r,M}(\mA)$ consists of the $r$-\BL\ elements of $\mA$. 
If  $M_k = k!^r$ for $r>0$ then   $\mJ_r(\mA)=C_M(\mA)$ is the \emph{Gevrey-class} of order $r$. In particular,  $\mJ_1(\mA)$ consists of the analytic elements of \mA, i.e., the elements $a \in \mA$  with convergent expansions $\sum_{\alpha \in \bn_0^d}\frac{\delta^\alpha(a)}{\alpha!} t^\alpha$ for some $t >0$. This follows as in the scalar case, see, e.g.~\cite{Timan63}.  Consequently, if $r \leq 1$ then $\mJ_r(\mA)$  consists only of analytic elements.
\end{ex}
\subsubsection*{Equivalence of Defining Sequences}
\label{sec:defining-sequences}

We call
two defining sequences $M$, $N$  \emph{equivalent}, $M \sim N$, if $C_M(\mA)=C_N(A)$. 
If
$
c^k N_k \leq M_k \leq C^k N_k
$
for all indices $k$ and some constants $c,C$ then $M \sim N$.
Fo example, the Gevrey class $\mJ_r$ is also generated by the sequence $N_k= k^{r k}$.

We recall a standard construction.
  Let $M$ be a defining sequence. The \emph{ function associated} to $M$ is
  \begin{equation}\label{eq:assfun}
    T_M(u)=\sup_{k\geq 0}\frac{u^k}{M_k} \quad \text{for } u>0\,.
  \end{equation}
We call $T_N$ and $T_M$ equivalent and write $T_N \sim T_M$, if 
$
T_N(c u) \leq T_M(u) \leq T_N(C u)
$
 for all $u>0$ and some positive constants $c,C$.  A  function associated to the Gevrey class $\mJ_r$ is 
   $
     T_M(u)= \exp(\frac{r}{e} u^{1/r}) 
$.

The \emph{log-convex regularization} $M^c$ of the sequence $M=(M_k)_{k \in \bn_0}$ is the largest logarithmically convex sequence smaller than $M$.

 \begin{prop}[{\cite{komatsu73,Mandelbrojt42,Mandelbrojt52}}]\label{proposition:log-convex-regularization}
   The log-convex regularization of $M$ satisfies
  \begin{equation}
     \label{eq:log-conv-reg}
     M_k^c = \sup_{u>0} \frac{u^k}{T_M(u)} \,.
   \end{equation}
 Moreover, $T_{M^c}=T_M$ and $ M^{cc}=M^c$.
 \end{prop}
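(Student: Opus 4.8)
The plan is to treat the passage $M\mapsto T_M$ and its ``inverse'' $T\mapsto\bigl(\sup_{u>0}u^k/T(u)\bigr)_k$ as a Legendre-type duality and to show that log-convex sequences are precisely its fixed points. Throughout I write $\widetilde M_k=\sup_{u>0}u^k/T_M(u)$ for the right-hand side of \eqref{eq:log-conv-reg}. Since $M_0=1$ forces $T_M(u)\ge u^0/M_0=1$ for every $u$, all quotients below are well defined (with the convention $u^k/T_M(u)=0$ when $T_M(u)=+\infty$), and one checks $\widetilde M_0=1$ since $T_M(u)\to 1$ as $u\to 0^+$.

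First I would record two elementary facts. From $T_M(u)\ge u^k/M_k$ we get $\widetilde M_k\le M_k$ for every $k$. Moreover $\widetilde M$ is log-convex: writing $u^k/T_M(u)=\bigl(u^{k-1}/T_M(u)\bigr)^{1/2}\bigl(u^{k+1}/T_M(u)\bigr)^{1/2}$ and taking the supremum over $u$ yields $\widetilde M_k^2\le\widetilde M_{k-1}\widetilde M_{k+1}$. Hence $\widetilde M$ is a log-convex sequence $\le M$, so $\widetilde M\le M^c$ by the maximality defining the log-convex regularization.

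The heart of the argument is the converse inequality, which I would deduce from the following self-duality: \emph{if $N$ is log-convex with $N_0=1$, then $\widetilde N=N$.} Only $\widetilde N_k\ge N_k$ needs proof. Since $k\mapsto\log N_k$ is a convex sequence, it admits at each $k$ a supporting line of some slope $t\in\br$, that is $jt-\log N_j\le kt-\log N_k$ for all $j\ge 0$. With $u=e^{t}$ this reads $u^j/N_j\le u^k/N_k$ for all $j$, so the supremum defining $T_N(u)$ is attained at $j=k$ and equals $u^k/N_k$; thus $u^k/T_N(u)=N_k$, giving $\widetilde N_k\ge N_k$. Applying this to $N=M^c$ and using that $M^c\le M$ forces $T_{M^c}\ge T_M$, hence $\widetilde{M^c}\le\widetilde M$, I obtain $M^c=\widetilde{M^c}\le\widetilde M$. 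Combined with the previous paragraph this proves $\widetilde M=M^c$, i.e. \eqref{eq:log-conv-reg}. The remaining statements then follow formally: the inequality $T_{M^c}\ge T_M$ is immediate from $M^c\le M$, while \eqref{eq:log-conv-reg} gives $v^k/M_k^c=\inf_{u>0}v^kT_M(u)/u^k\le T_M(v)$ (taking $u=v$), and the supremum over $k$ yields $T_{M^c}(v)\le T_M(v)$, so $T_{M^c}=T_M$. Finally, applying \eqref{eq:log-conv-reg} with $M^c$ in place of $M$ gives $M^{cc}_k=\sup_{u>0}u^k/T_{M^c}(u)$, and since $T_{M^c}=T_M$ this equals $\sup_{u>0}u^k/T_M(u)=M_k^c$, so $M^{cc}=M^c$.

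The main obstacle is the self-duality lemma, and precisely the construction of the supporting slope $t$ together with the verification that $T_N(e^{t})$ is then finite and attained at the correct index $k$; this is exactly where log-convexity is used in an essential way (for a non-log-convex $M$ the relevant index would be skipped, which is what the regularization repairs). The only other point needing care is the bookkeeping of possibly infinite values of $T_M$ and the degenerate behaviour as $u\to 0^+$, both of which are controlled by the normalization $M_0=1$.
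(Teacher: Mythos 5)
Your proof is correct, and it is worth noting at the outset that the paper itself offers no proof of this proposition: it is quoted from the literature (Komatsu, Mandelbrojt), where the standard argument constructs $M^c$ geometrically via the Newton polygon of the points $(k,\log M_k)$ and reads off \eqref{eq:log-conv-reg} from the polygon's supporting lines. Your argument is the same convex (Legendre-type) duality written analytically and made self-contained: the inequality $\widetilde M_k\le M_k$ together with the interpolation trick $u^k/T_M(u)=\bigl(u^{k-1}/T_M(u)\bigr)^{1/2}\bigl(u^{k+1}/T_M(u)\bigr)^{1/2}$ shows $\widetilde M$ is a log-convex minorant, hence $\widetilde M\le M^c$; the supporting-line lemma (self-duality $\widetilde N=N$ for log-convex $N$) applied to $N=M^c$, combined with the antitonicity $M^c\le M\Rightarrow T_{M^c}\ge T_M\Rightarrow\widetilde{M^c}\le\widetilde M$, gives the reverse inequality. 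These are precisely the two halves of a biconjugation theorem, and your subsequent derivations of $T_{M^c}=T_M$ and $M^{cc}=M^c$ from \eqref{eq:log-conv-reg} are routine and correct. (For $M^{cc}=M^c$ there is an even shorter route: $M^c$ is itself log-convex, hence equals its own largest log-convex minorant.)

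One small caveat, which you partially flag yourself: in the degenerate case $\varliminf_{k\to\infty} M_k^{1/k}=0$ one has $T_M\equiv+\infty$, hence $\widetilde M\equiv 0$, and indeed no positive log-convex minorant of $M$ exists (a convex sequence has nondecreasing increments, so it admits a linear lower bound and cannot lie below $\log M_k$ when $M_k^{1/k}$ has a subsequence tending to $0$); so \eqref{eq:log-conv-reg} only holds there under the convention $M^c\equiv 0$. In the paper this case is harmless, since by Proposition~\ref{prop:equivclasses} the corresponding Carleman class is trivial; under the nondegeneracy assumption $\varliminf M_k^{1/k}>0$ your claims that $T_M(u)\to 1$ as $u\to 0^+$ and $\widetilde M_0=1$ are justified, and no further repair is needed.
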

We will also need the following simple facts about log-convex sequences.
\begin{lem}[\cite{koosis88,Mandelbrojt52}]\label{convreginc} 
(1) For all $k,l \in \bn_0$ the sequence $M$ satisfies $M^c_k M^c_l \leq M^c_{k+l}$.
(2) The sequence $(M_k^c)^{1/k}$ is increasing.
\end{lem}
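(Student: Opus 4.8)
The plan is to reduce both statements to the single structural fact that $M^c$ is logarithmically convex, and then argue entirely with the difference sequence of $\log M^c$. Write $m_k=\log M^c_k$ and $d_k=m_{k+1}-m_k$ for $k\in\bno$. By definition log-convexity means $(M^c_k)^2\le M^c_{k-1}M^c_{k+1}$, which after taking logarithms reads $2m_k\le m_{k-1}+m_{k+1}$, i.e. $d_{k-1}\le d_k$. Thus the entire content of ``$M^c$ is log-convex'' is that \emph{the sequence $(d_k)$ is nondecreasing}. Before anything else I would record that $m_0=0$, i.e. $M^c_0=1$: by Proposition~\ref{proposition:log-convex-regularization} one has $M^c_0=\sup_{u>0}T_M(u)^{-1}$, and since $T_M(u)\ge u^0/M_0=1$ for every $u$ while $T_M(u)\to 1$ as $u\to 0^+$, we get $\inf_{u>0}T_M(u)=1$ and hence $M^c_0=1$.

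For part~(1) I would use a telescoping comparison. For $k,l\in\bno$,
\[
m_{k+l}-m_l=\sum_{i=0}^{k-1}d_{l+i}\ge \sum_{i=0}^{k-1}d_i=m_k-m_0=m_k,
\]
where the inequality is term-by-term via $d_{l+i}\ge d_i$ (monotonicity of $(d_j)$) and the last equality uses $m_0=0$. Exponentiating $m_{k+l}\ge m_k+m_l$ gives $M^c_{k+l}\ge M^c_k M^c_l$, which is the claim.

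For part~(2) I would show that $m_k/k$ is nondecreasing for $k\ge 1$; exponentiating then yields that $(M^c_k)^{1/k}$ is increasing. Since $m_k=\sum_{j=0}^{k-1}d_j$, the quantity $A_k:=m_k/k$ is the average of the first $k$ differences $d_0,\dots,d_{k-1}$. Because $(d_j)$ is nondecreasing, $d_k$ dominates each of $d_0,\dots,d_{k-1}$, hence $d_k\ge A_k$, and therefore
\[
A_{k+1}=\frac{kA_k+d_k}{k+1}\ge\frac{kA_k+A_k}{k+1}=A_k.
\]
This proves $A_k\le A_{k+1}$, hence (2).

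Both parts are elementary once the setup is in place; the only genuine points to get right are the two ``interface'' facts, namely that log-convexity of $M^c$ is equivalent to monotonicity of the difference sequence $(d_k)$, and that $M^c_0=1$ (so the log-sequence starts at the origin, which is exactly what makes a convex sequence superadditive and gives it a nondecreasing average). These are the steps I would state carefully; the remaining manipulations are routine summation estimates, and I do not expect any serious obstacle beyond this bookkeeping.
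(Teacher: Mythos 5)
Your proof is correct, but it cannot be compared line-by-line with the paper: the paper states Lemma~\ref{convreginc} with a citation to Koosis and Mandelbrojt and gives no proof at all, so you have supplied the missing argument. What you do is the standard Newton-polygon reasoning from those references, recast cleanly in terms of the difference sequence $d_k=\log M^c_{k+1}-\log M^c_k$: log-convexity is exactly monotonicity of $(d_k)$, superadditivity of $\log M^c$ (part (1)) follows by the telescoping comparison, and monotonicity of the averages $m_k/k$ (part (2)) follows since the next difference dominates the running average. Both reductions are sound, and you correctly identify the two interface facts, including $M^c_0=1$, which is indeed needed for the telescoping step (without $m_0=0$, convex sequences need not be superadditive).

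One small caveat deserves a flag, though it is not a genuine gap. Your derivation of $M^c_0=1$ from Proposition~\ref{proposition:log-convex-regularization} tacitly assumes $T_M$ is finite and that $T_M(u)\to 1$ as $u\to 0^+$; this holds when $\varliminf M_k^{1/k}>0$, but if $\varliminf M_k^{1/k}=0$ (the case the paper calls trivial in Proposition~\ref{prop:equivclasses}) one can have $T_M\equiv\infty$, whence $M^c_k=\sup_{u>0}u^k/T_M(u)=0$ for all $k$. In that degenerate case both assertions of the lemma hold vacuously ($0\cdot 0\le 0$ and the constant sequence is nondecreasing), but your sequence $m_k=\log M^c_k$ is undefined, so the reduction to the log-sequence should be stated as applying only in the nondegenerate case, with the degenerate case disposed of separately in one line. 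Also note that your justification of $T_M(u)\to 1$ needs the observation that $M_k\ge c^k$ for some $c>0$ and all large $k$ (so that $\sup_{k\ge 1}u^k/M_k\to 0$); for an arbitrary positive sequence with $M_0=1$ this supremum need not vanish as $u\to 0^+$. With these boundary remarks added, the proof is complete and matches the intent of the cited sources.
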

 If  $\delta_1, \dotsc, \delta_d$ are generators of an automorphism group we can give a weak type characterization of $C_{r,M}(\mA)$.
\begin{lem}
  \label{proposition-lem-carleman-weak}
  Assume that the automorphism group $\Psi$ acts on \mA. An element $a \in \mA$ is in $C_{r,M}(\mA)$ if and only if $G_{a',a}(t)=\inprod{a',\psi_t(a)},$ is in $C_{r,M}(L^\infty(\brd)))$ for all $a' \in \mA'$, the dual of $\mA$. In this case
$
 \norm{a}_{C_{r,M}(\mA)}\asymp\sup_{\norm{a'}_{\mA'}\leq 1}\norm{G_{a',a}}_{C_{r,M}(L^\infty(\brd))}  \,.
$
\end{lem}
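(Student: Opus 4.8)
The plan is to reduce the whole statement to one differentiation identity combined with the Hahn--Banach duality $\norm{b}_\mA=\sup_{\norm{a'}_{\mA'}\le1}\abs{\inprod{a',b}}$. First I would show that whenever $a$ lies in $\mD(\delta^\alpha)$,
\[
 D^\alpha G_{a',a}(t)=\inprod{a',\psi_t(\delta^\alpha(a))}=G_{a',\delta^\alpha(a)}(t),\qquad a'\in\mA'.
\]
The case $\abs\alpha=1$ comes from the group law $\psi_{t+he_j}=\psi_t\psi_{he_j}$, which gives $h^{-1}(\psi_{t+he_j}(a)-\psi_t(a))=\psi_t\bigl(h^{-1}(\psi_{he_j}(a)-a)\bigr)$; since $\psi_t$ is bounded and $\delta_j$ closed, letting $h\to0$ turns the strong limit defining $\delta_j(a)$ into $\partial_jG_{a',a}(t)=G_{a',\delta_j(a)}(t)$. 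Iterating, and using that $\psi_t$ commutes with each generator so that $\psi_t(a)\in\mD(\delta^\alpha)$ as well, yields the identity for all $\alpha$.

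Granting the identity, both halves of the asserted equivalence are immediate. If $a\in C_{r,M}(\mA)$ then $a\in\mD(\delta^\alpha)$ for every $\alpha$ and $\norm{D^\alpha G_{a',a}}_\infty=\sup_t\abs{\inprod{a',\psi_t(\delta^\alpha(a))}}\le M_\Psi\norm{a'}_{\mA'}\norm{\delta^\alpha(a)}_\mA$; dividing by $r^{\abs\alpha}M_{\abs\alpha}$ and taking suprema over $\alpha$ and over $\norm{a'}_{\mA'}\le1$ gives $\sup_{\norm{a'}_{\mA'}\le1}\norm{G_{a',a}}_{C_{r,M}(L^\infty(\brd))}\le M_\Psi\norm{a}_{C_{r,M}(\mA)}$. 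Conversely, evaluating the identity at $t=0$ and applying duality yields $\norm{\delta^\alpha(a)}_\mA=\sup_{\norm{a'}_{\mA'}\le1}\abs{D^\alpha G_{a',a}(0)}\le\sup_{\norm{a'}_{\mA'}\le1}\norm{D^\alpha G_{a',a}}_\infty$, whence $\norm{a}_{C_{r,M}(\mA)}\le\sup_{\norm{a'}_{\mA'}\le1}\norm{G_{a',a}}_{C_{r,M}(L^\infty(\brd))}$. Combining the two bounds produces the equivalence $\asymp$, with comparison constants $1$ and $M_\Psi$.

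The catch is that the second computation presupposes $a\in\mD(\delta^\alpha)$, so the real content of the reverse implication — and the step I expect to be the main obstacle — is the upgrade of \emph{weak} smoothness (smoothness of every scalar orbit function $G_{a',a}$) to \emph{strong} smoothness, $a\in\bigcap_\alpha\mD(\delta^\alpha)$. To carry this out I would handle $\delta_j$ first and then induct. Writing $b_h=h^{-1}(\psi_{he_j}(a)-a)\in\mA$, Hahn--Banach gives $\norm{b_h-b_k}_\mA=\sup_{\norm{a'}_{\mA'}\le1}\abs{\inprod{a',b_h-b_k}}$, where each scalar entry equals $h^{-1}(G_{a',a}(he_j)-G_{a',a}(0))-k^{-1}(G_{a',a}(ke_j)-G_{a',a}(0))$. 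Taylor's formula applied to $G_{a',a}$, together with the uniform bound $\norm{D^{2e_j}G_{a',a}}_\infty\le K\,r^2M_2\norm{a'}_{\mA'}$ furnished by the hypothesis (with $K=\sup_{\norm{a'}_{\mA'}\le1}\norm{G_{a',a}}_{C_{r,M}(L^\infty(\brd))}$), bounds this by $\tfrac12(\abs h+\abs k)K r^2M_2\norm{a'}_{\mA'}$, so $(b_h)$ is norm-Cauchy as $h\to0$. Completeness of \mA\ then furnishes $\delta_j(a)=\lim_h b_h\in\mA$ with $\inprod{a',\delta_j(a)}=\partial_jG_{a',a}(0)$; since $G_{a',\delta_j(a)}=\partial_jG_{a',a}$ inherits all higher bounds of $G_{a',a}$ shifted by $e_j$, the same estimate applies to $\delta_j(a)$, and induction on $\abs\alpha$ places $a$ in every $\mD(\delta^\alpha)$. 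The one delicate point is that these Taylor estimates a priori control the smooth $L^\infty$-representative of $G_{a',a}$ rather than the pointwise orbit function $t\mapsto\inprod{a',\psi_t(a)}$; reconciling the two amounts to weak-$*$ continuity of the orbit, which I would secure by a preliminary mollification, replacing $a$ by $\int_{\brd}\phi(s)\psi_s(a)\,ds$ with $\phi\in C_c^\infty(\brd)$ running through an approximate identity, or, when available, by working inside the continuous part $C(\mA)$ in which the Carleman classes already live.
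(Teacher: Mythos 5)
Your proposal is correct, and its first two paragraphs are in substance the paper's entire proof: the paper disposes of the lemma with the single chain $\norm{\delta^\alpha a}_\mA \leq \sup_{\norm{a'}_{\mA'}\leq 1}\norm{G_{a',\delta^\alpha a}}_{L^\infty} = \sup_{\norm{a'}_{\mA'}\leq 1}\norm{D^\alpha G_{a',a}}_{L^\infty} \leq M_\Psi \norm{\delta^\alpha a}_\mA$, citing \cite[Lemma 3.20]{grkl10} for the identity $D^\alpha G_{a',a}=G_{a',\delta^\alpha a}$; note that this chain only makes sense when $\delta^\alpha a$ already exists, so the paper implicitly delegates the reverse implication (weak smoothness of all orbit functions forces $a\in\bigcap_\alpha \mD(\delta^\alpha)$) to that citation as well. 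Where you genuinely go beyond the paper is in proving this outsourced content inline: your group-law derivation of the differentiation identity, and especially the Taylor-plus-Hahn--Banach argument showing the difference quotients $b_h$ are norm-Cauchy (which, since the domain of the generator $\delta_j$ is by definition the set where this limit exists, immediately yields $a\in\mD(\delta_j)$, and then inducts on $\abs\alpha$ via $G_{a',\delta_j(a)}=\partial_j G_{a',a}$), is exactly the substance hidden behind the paper's ``follows immediately.'' Two small points deserve attention. First, your constant $K=\sup_{\norm{a'}_{\mA'}\leq 1}\norm{G_{a',a}}_{C_{r,M}(L^\infty)}$ is not literally ``furnished by the hypothesis,'' which is only pointwise in $a'$; you need one extra standard line — closed graph applied to the linear map $a'\mapsto G_{a',a}$ into the Banach space $C_{r,M}(L^\infty(\brd))$, using that $a'_n\to a'$ forces $G_{a'_n,a}\to G_{a',a}$ uniformly — before the uniform second-derivative bound in your Cauchy estimate is available. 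Second, the representative issue you flag at the end is real but your mollification fix is the wrong tool, since the Bochner integral $\int\phi(s)\psi_s(a)\,ds$ itself presupposes measurability of the orbit, which is the very point at issue; the clean resolution is the second option you mention, namely reading membership of $G_{a',a}$ in $C_{r,M}(L^\infty(\brd))$ as smoothness of the pointwise orbit function with the stated bounds, under which your Taylor step applies verbatim, and observing that once $b_h$ converges one gets $\psi_{he_j}(a)\to a$ for free, so all elements produced along the induction lie in $C(\mA)$ and have norm-continuous orbits, making the argument self-consistent. The paper is silent on both points, so your treatment is, if anything, the more complete one.
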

\begin{proof} The required equivalence follows immediately from
  \begin{equation*}\label{eq:xxaa}
\norm{\delta^\alpha a}_\mA
\leq \sup_{\norm{a'}_{\mA'}\leq 1}\norm{ G_{a',\delta^\alpha a}}_{L^\infty(\brd)}
= \sup_{\norm{a'}_{\mA'}\leq 1}\norm{D^\alpha G_{a',a}}_{L^\infty(\brd)}
 \leq M_{\Psi}\norm{ \delta^\alpha a}_\mA  \,
\end{equation*}
by dividing with $r^\alpha M_{\abs \alpha}$ and taking suprema over all $\alpha$. The equality $G_{a',\delta^\alpha a}=D^\alpha G_{a',a}$ is a consequence of elementary properties of $G_{a',a}$~\cite[Lemma 3.20]{grkl10}.
\end{proof}
\begin{prop}[{\cite{gorny39,Mandelbrojt52}}] \label{prop:equivclasses}
  Assume that the automorphism group $\Psi$ acts on \mA, and let  $M$ be a defining sequence for $C_M(\mA)$.
    If $\varliminf M_k^{1/k}=0$, then $C_M(\mA)$ is trivial. 
   If $0 < \varliminf M_k^{1/k} < \infty$, then $C_M(\mA)$ is the class of \BL\ elements.
   If $\lim_{k \to \infty} M_k^{1/k}=\infty$,  and 
    $ 
      \label{eq:defseqequiv}
      (M_k^c)^{1/k} \asymp (N_k^c)^{1/k} \,,
    $ 
      then $C_M(\mA) =C_N(\mA)$.   Moreover, the last condition is equivalent to $T_M \sim T_N$.
 \end{prop}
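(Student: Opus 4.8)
The plan is to reduce everything to the scalar theory and to the log-convex regularization $M^c$. By Lemma~\ref{proposition-lem-carleman-weak} membership $a\in C_{r,M}(\mA)$ is equivalent, uniformly over the unit ball of $\mA'$, to membership of the bounded functions $G_{a',a}(t)=\inprod{a',\psi_t(a)}$ in $C_{r,M}(L^\infty(\brd))$; this transports the classical Landau--Kolmogorov--Gorny interpolation inequalities, with their universal constants, into our setting. The single fact that drives all three cases is the identity of classes $C_M(\mA)=C_{M^c}(\mA)$, i.e.\ a Carleman class only sees the log-convex regularization of its defining sequence. The inclusion $C_{M^c}(\mA)\subseteq C_M(\mA)$ is immediate from $M^c_k\le M_k$, while the reverse inclusion is exactly the content of the Gorny inequality for the functions $G_{a',a}$, yielding, for some universal $r'=r'(r)$ and $C$, a bound $\norm{a}_{C_{r',M^c}(\mA)}\le C\norm{a}_{C_{r,M}(\mA)}$.

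Once $C_M(\mA)=C_{M^c}(\mA)$ is in hand, the trichotomy is read off from the regularization alone. By Lemma~\ref{convreginc}(2) the sequence $(M^c_k)^{1/k}$ is increasing, and an elementary argument comparing $M$ with log-convex (geometric) minorants shows $\lim_k (M^c_k)^{1/k}=\varliminf_k M_k^{1/k}$. If $\varliminf_k M_k^{1/k}=0$, then $(M^c_k)^{1/k}$ is increasing with limit $0$, so $M^c_k=0$ for $k\ge1$; hence finiteness of the $C_{M^c}(\mA)$-norm forces $\delta^\alpha a=0$ for all $\abs\alpha\ge1$, i.e.\ $a\in\bigcap_j\ker\delta_j$, and the class is trivial. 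If $0<\varliminf_k M_k^{1/k}=L<\infty$, then $(M^c_k)^{1/k}$ increases to $L$ and is bounded below by a positive constant (because $M^c_1=\sup_{u>0}u/T_M(u)>0$, as $T_M(u)<\infty$ for $0<u<L$), so $c^k\le M^c_k\le C^k$; thus $M^c$ is equivalent to the constant sequence, and by the Example following Definition~\ref{def:carleman} we get $C_{M^c}(\mA)=C_{(1)}(\mA)$, the class of \BL\ elements.

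In the remaining regime $\lim_k M_k^{1/k}=\infty$, the hypothesis $(M^c_k)^{1/k}\asymp(N^c_k)^{1/k}$ reads $c^kN^c_k\le M^c_k\le C^kN^c_k$, so the equivalence criterion for defining sequences gives $M^c\sim N^c$, i.e.\ $C_{M^c}(\mA)=C_{N^c}(\mA)$; combined with $C_M=C_{M^c}$ and $C_N=C_{N^c}$ this yields $C_M(\mA)=C_N(\mA)$. Finally, the equivalence of $(M^c_k)^{1/k}\asymp(N^c_k)^{1/k}$ with $T_M\sim T_N$ is a Legendre-type duality: using $M^c_k=\sup_{u>0}u^k/T_M(u)$ together with $T_M=T_{M^c}=\sup_{k}u^k/M^c_k$ from Proposition~\ref{proposition:log-convex-regularization}, a dilation $T_N(cu)\le T_M(u)\le T_N(Cu)$ converts directly into $C^{-k}N^c_k\le M^c_k\le c^{-k}N^c_k$, and the computation reverses.

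I expect the main obstacle to be the reverse inclusion in $C_M(\mA)=C_{M^c}(\mA)$. A single, universal-constant, log-convexity estimate $m_n^2\le K\,m_{n-1}m_{n+1}$ for the derivative norms $m_n=\sup_{\abs\alpha=n}\norm{\delta^\alpha a}_\mA$ is \emph{not} enough, since to pass from a geometric bound known only along a sparse subsequence to all $n$ one must iterate across long gaps, and each step costs a factor $K$, destroying the constant. One genuinely needs the full strength of the Gorny inequality, and one must then verify that its constants survive both the passage to $L^\infty(\brd)$, uniformly over $\mA'$, and the presence of mixed derivatives in dimension $d>1$. Controlling these constants is the crux; the remainder is bookkeeping with monotone and log-convex sequences.
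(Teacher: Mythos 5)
Your proposal is correct and takes essentially the same route as the paper: the paper likewise reduces to scalar functions via the weak-type characterization of Lemma~\ref{proposition-lem-carleman-weak} and then invokes \cite[6.5.III]{Mandelbrojt52}, with the Kolmogorov inequality replaced by the Cartan--Gorny estimates valid on $\brd$ (cf.\ \cite[IV.E., Problem 7]{koosis88}). The only difference is that you unpack the citation's internal mechanism --- the identity $C_M(\mA)=C_{M^c}(\mA)$ driven by Gorny's inequality, the Newton-polygon facts about $(M_k^c)^{1/k}$, and the Legendre-type duality for $T_M\sim T_N$ --- and you correctly identify control of the Gorny constants under the passage to $L^\infty(\brd)$ as the crux, which is precisely the point the paper's proof flags.
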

\begin{proof} 
  As $a \in C_M(\mA)$ if and only if $G_{a',a} \in
  C_M(\brd)$ for all $a' \in \mA'$, the conditions follow from
  \cite[6.5.III]{Mandelbrojt52} by a weak type argument. The statement
  given there is for functions on the real line, but it remains true
  for functions on \brd. In the proof one has to replace the
  Kolmogorov inequality \cite[6.3.III]{Mandelbrojt42} by the
  Cartan-Gorny estimates \cite[(6.4.5)]{Mandelbrojt52}. They can be
  verified for functions on \brd\ as well (see \cite[IV.E.,Problem
  7]{koosis88}).

  The equivalence between condition \eqref{eq:defseqequiv} and $T_M \sim T_N$ follows directly from the definition of equivalent associated functions.
\end{proof}
\begin{cor}
  In particular, we obtain that $C_M(\mA)=C_{M^c}(\mA)$.
\end{cor}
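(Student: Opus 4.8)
The plan is to obtain the corollary as an immediate consequence of Proposition~\ref{prop:equivclasses}, applied with the second defining sequence taken to be $N:=M^c$. The crucial structural observation is that, once the weak-type reduction of Lemma~\ref{proposition-lem-carleman-weak} has transported the question to scalar Carleman classes on $\brd$, Proposition~\ref{prop:equivclasses} shows that the class $C_M(\mA)$ is determined by $M$ only through its associated function $T_M$ (equivalently, through the regularized quantities $(M_k^c)^{1/k}$). So the corollary will follow once I exhibit a sequence with the same associated function as $M$, namely $M^c$ itself.

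First I would invoke Proposition~\ref{proposition:log-convex-regularization}, which supplies the two identities $T_{M^c}=T_M$ and $M^{cc}=M^c$. Setting $N:=M^c$, the second identity gives $N^c=M^{cc}=M^c=N$, so that $(N_k^c)^{1/k}=(M_k^c)^{1/k}$ holds for every $k$; in particular the equivalence condition of Proposition~\ref{prop:equivclasses} is met trivially, as an equality rather than a mere $\asymp$. Equivalently, the first identity gives $T_N=T_{M^c}=T_M$, hence $T_M\sim T_N$ with constants $c=C=1$. Either formulation feeds directly into the hypotheses of the proposition.

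It then remains only to run through the trichotomy of Proposition~\ref{prop:equivclasses}. Since the classification into \emph{trivial}, \emph{bandlimited}, and genuine Carleman class is governed by the associated function $T_M$, and $T_{M^c}=T_M$, passing from $M$ to $N=M^c$ leaves the case unchanged; in the genuine case $\lim_{k\to\infty}M_k^{1/k}=\infty$ the (trivially satisfied) equivalence $(M_k^c)^{1/k}\asymp(N_k^c)^{1/k}$ yields $C_M(\mA)=C_{M^c}(\mA)$ at once. I expect the only point requiring any care to be the verification that the boundary cases $\varliminf_{k}M_k^{1/k}\in\{0\}\cup(0,\infty)$ are preserved under regularization—but because $M$ and $M^c$ share both the same associated function and the same further regularization ($M^{cc}=M^c$), the growth invariant controlling each case is identical for the two sequences, so this check is routine rather than a genuine obstacle.
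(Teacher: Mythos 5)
Your proposal is correct and is essentially the paper's own (implicit) argument: the corollary is stated as an immediate consequence of Proposition~\ref{prop:equivclasses} applied with $N=M^c$, using exactly the identities $T_{M^c}=T_M$ and $M^{cc}=M^c$ from Proposition~\ref{proposition:log-convex-regularization} that you invoke. Your extra care about the trivial and bandlimited cases goes slightly beyond what the paper records, but it does not change the route.
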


\subsubsection*{Algebra properties of Carleman classes}
\label{sec:algebra-proposition-carl}
In this section we verify that $C_M(\mA)$ is an \IC\ subalgebra of \mA,  if $C_M(\mA)=C_{M^c}(\mA)$. If \mA\ has an automorphism group this follows form Proposition~\ref{prop:equivclasses}.

\begin{prop}
  Each Carleman class $C_M(\mA)$ is an algebra.
\end{prop}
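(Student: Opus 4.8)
The plan is to derive the algebra property from the multi-index Leibniz rule for the commuting derivations, combined with the log-convexity of the regularized defining sequence $M^c$. First I would record the iterated Leibniz formula: for commuting derivations $\delta_1,\dots,\delta_d$ and every multi-index $\alpha$,
\[
\delta^\alpha(ab) = \sum_{\beta\leq\alpha}\binom{\alpha}{\beta}\,\delta^\beta(a)\,\delta^{\alpha-\beta}(b),
\qquad \binom{\alpha}{\beta}=\prod_{j=1}^d\binom{\alpha_j}{\beta_j},
\]
valid for $a,b$ in the intersection of all the domains $\mD(\delta^\alpha)$. This is proved by induction on $\abs\alpha$ from the single Leibniz rule \eqref{eq:derivation} together with the commutativity of the $\delta_j$, exactly as for classical partial derivatives; it is the linear counterpart of the combinatorics carried out for the quotient rule in the appendix. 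Taking norms and applying the submultiplicativity estimate $\norm{xy}_\mA\leq C\norm{x}_\mA\norm{y}_\mA$ once per term yields
\[
\norm{\delta^\alpha(ab)}_\mA \leq C \sum_{\beta\leq\alpha}\binom{\alpha}{\beta}\,\norm{\delta^\beta(a)}_\mA\,\norm{\delta^{\alpha-\beta}(b)}_\mA.
\]

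The only genuine obstacle is that an arbitrary defining sequence $M$ need not be log-convex, so after inserting the defining bounds the products $M_{\abs\beta}M_{\abs{\alpha-\beta}}$ are \emph{not} controlled by $M_{\abs\alpha}$, and no single radius would absorb the resulting sum. I would remove this obstacle by passing to the regularization. Since $C_M(\mA)=C_{M^c}(\mA)$ by the Corollary following Proposition~\ref{prop:equivclasses}, any $a,b\in C_M(\mA)$ already lie in $C_{r,M^c}(\mA)$ and $C_{s,M^c}(\mA)$ for suitable $r,s>0$, so that $\norm{\delta^\beta(a)}_\mA\leq \norm{a}_{C_{r,M^c}(\mA)}\,r^{\abs\beta}M^c_{\abs\beta}$ and analogously for $b$. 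Now Lemma~\ref{convreginc}(1) supplies $M^c_{\abs\beta}M^c_{\abs{\alpha-\beta}}\leq M^c_{\abs\alpha}$, while the binomial sum factors over coordinates,
\[
\sum_{\beta\leq\alpha}\binom{\alpha}{\beta}\,r^{\abs\beta}s^{\abs{\alpha-\beta}} = \prod_{j=1}^d (r+s)^{\alpha_j} = (r+s)^{\abs\alpha}.
\]
Combining these gives $\norm{\delta^\alpha(ab)}_\mA\leq C\,\norm{a}_{C_{r,M^c}(\mA)}\,\norm{b}_{C_{s,M^c}(\mA)}\,(r+s)^{\abs\alpha}M^c_{\abs\alpha}$, so that $ab\in C_{r+s,M^c}(\mA)\subseteq C_{M^c}(\mA)=C_M(\mA)$, with the submultiplicative estimate $\norm{ab}_{C_{r+s,M^c}(\mA)}\leq C\,\norm{a}_{C_{r,M^c}(\mA)}\,\norm{b}_{C_{s,M^c}(\mA)}$.

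Since $C_M(\mA)$ is visibly a vector space (the spaces $C_{r,M}(\mA)$ increase with $r$, because a larger radius weakens the norm, so a finite sum of elements stays in a common $C_{t,M}(\mA)$), this closure under multiplication is precisely what the statement requires; the same estimate will later furnish the Banach-algebra norm on each $C_{r,M^c}(\mA)$. I expect the bulk of the care to go not into the algebra estimate itself—which is routine once log-convexity is in hand—but into the reduction step, namely verifying that membership in $C_M(\mA)$ genuinely produces membership in some $C_{r,M^c}(\mA)$ with finite norm, so that the log-convex sequence $M^c$ may legitimately be used to bound the inputs.
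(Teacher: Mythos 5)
Your proposal is correct and is essentially the paper's own proof: the paper disposes of this proposition with ``the proof is as in Komatsu,'' and Komatsu's argument is precisely your combination of the iterated Leibniz rule, the estimate $M^c_k M^c_l \leq M^c_{k+l}$ from Lemma~\ref{convreginc}, and the binomial identity giving the radius $r+s$. Your explicit reduction to the log-convex regularization via $C_M(\mA)=C_{M^c}(\mA)$ is exactly the standing hypothesis of the section in which the proposition appears (automatic when an automorphism group acts, by Proposition~\ref{prop:equivclasses}), so nothing is missing.
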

\begin{proof} The proof is as in Komatsu~\cite{komatsu73}.
\end{proof}
  We need the following technical term: A  sequence $(u_k)_{k\in \bn_0}$ of positive numbers is almost increasing, if $u_k \leq C u_l$ for all $k < l$ and a constant $C>0$.
\begin{lem}\label{lem-almost-increasing}
Assume that the defining sequence $M$ satisfies  $M =M^c$. The sequence $(M_k/k!)^{1/k}$ is almost increasing if and only if there is a $C>0$ such that for all $l \in \bn$ and all indices $j_k, k=1,\dotsc,l$ with $j=\sum_{k=1}^l j_k$
  \begin{equation}\label{eq:weakinceq}
\prod_{k=1}^l \frac {M_{j_k}}{j_k!} \leq C^j \frac{ M_j}{j!} \,. 
\end{equation}
\end{lem}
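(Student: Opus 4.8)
The plan is to take logarithms and reduce both directions to a single monotonicity estimate. Write $b_k = M_k/k!$ and $\mu_k = \tfrac1k\log b_k = \tfrac1k(\log M_k - \log k!)$, so that $(M_k/k!)^{1/k} = e^{\mu_k}$. Then \emph{almost increasing} means exactly $\mu_k - \mu_l \le \log C$ for all $k<l$. Taking logarithms in \eqref{eq:weakinceq} and using $j=\sum_{k=1}^l j_k$ together with $j\mu_j = \sum_k j_k\mu_j$, the product inequality becomes the weighted-average statement $\sum_{k=1}^l j_k(\mu_{j_k}-\mu_j) \le j\log C$, i.e. a weighted sum of the backward differences $\mu_{j_k}-\mu_j$ is controlled. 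Recasting everything in this additive form is what makes both implications transparent.

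For the implication \emph{almost increasing} $\Rightarrow$ \eqref{eq:weakinceq} I would argue directly: every block satisfies $j_k \le j$, so almost increasing gives $\mu_{j_k}-\mu_j \le \log C$ for each $k$ (the only way $j_k=j$ is $l=1$, in which case the difference is $0$). Summing against the nonnegative weights $j_k$ yields $\sum_k j_k(\mu_{j_k}-\mu_j) \le (\sum_k j_k)\log C = j\log C$, which is precisely \eqref{eq:weakinceq} with the same constant. This direction uses nothing beyond the definitions and does not need $M=M^c$.

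For the converse I would first specialize \eqref{eq:weakinceq} to $n$ equal blocks $j_1=\dots=j_n=k$, obtaining $b_k^{\,n}\le C^{nk}b_{nk}$, that is $\mu_k \le \mu_{nk}+\log C$ for all $n$. This settles the claim along multiples of $k$, and the \textbf{main obstacle} is to bridge the gaps between $nk$ and $(n+1)k$. Here I would invoke the hypothesis $M=M^c$: by Lemma~\ref{convreginc}(2) the sequence $(M_m)^{1/m}$ is increasing, so with $\nu_m=\tfrac1m\log M_m$ and $\phi_m=\tfrac1m\log m!$ (both increasing, $\mu_m=\nu_m-\phi_m$) we have $\nu_l\ge\nu_{nk}$ whenever $l\ge nk$. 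Choosing $n=\floor{l/k}\ge 1$, so that $nk\le l\le(n+1)k\le 2nk$, the block estimate reads $\nu_{nk}-\nu_k\ge\phi_{nk}-\phi_k-\log C$, while the elementary Stirling bounds $\log m-1\le\phi_m\le\log m$ give $\phi_l-\phi_{nk}\le\log(l/nk)+1\le\log 2+1$. Chaining $\nu_l-\nu_k\ge\nu_{nk}-\nu_k\ge\phi_{nk}-\phi_k-\log C\ge\phi_l-\phi_k-\log(2eC)$ and rearranging gives $\mu_k-\mu_l\le\log(2eC)$ for all $k\le l$, which is the almost-increasing property.

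The one delicate point is this gap-filling step: after passing from $l$ down to the nearest smaller multiple $nk$, the factorial growth encoded in $\phi_m$ has to be reabsorbed, and this is exactly where both the log-convexity of $M$ (monotonicity of $\nu$) and the sharp comparison of $l!$ with $(nk)!$ through Stirling enter. The finitely many small indices for which the Stirling estimate is too crude can be absorbed into the constant, so no uniformity is lost.
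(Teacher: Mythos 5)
Your proof is correct and follows essentially the same route as the paper's: the forward direction is the paper's power-and-multiply argument rewritten additively in logarithms, and your converse uses the same two steps as the paper — specializing \eqref{eq:weakinceq} to $n$ equal blocks to handle multiples $nk$, then bridging $nk \le l < (n+1)k$ via the monotonicity of $M_m^{1/m}$ from Lemma~\ref{convreginc} (where $M=M^c$ enters) together with a Stirling comparison of factorials, which is exactly the paper's interpolation step. The only differences are presentational, e.g.\ your explicit bounds $\log m - 1 \le \tfrac{1}{m}\log m! \le \log m$ in place of the paper's reduction to the equivalent condition \eqref{eq:2}.
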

\begin{proof}
 Assuming that $(M_k/k!)^{1/k}$ is almost increasing we obtain
\[
\frac {M_{j_k}}{j_k!} \leq C^k  \Bigl(\frac{ M_{j}}{j!} \Bigr)^{k/j} \,,
\]
and the ``if'' part follows by multiplying these estimates.
For the other implication observe first that Stirling's formula implies that  $(M_k/k!)^{1/k}$ is almost increasing if and only if there is a $C'>0$  such that
\begin{equation} \label{eq:2}
\frac{M_k^{1/k}}{k} \leq C' \frac{M_l^{1/l}}{l} \quad \text{for all  } k < l.
\end{equation}
If $l= rk$ for an integer $r$ then (\ref{eq:weakinceq}) implies
\[
\frac{M_k^{1/k}}{k} \leq C' \frac{M_{rk}^{1/{rk}}}{rk} \,.
\]
If $rk < l < (r+1)k$, we use an interpolation argument.  By Lemma~\ref{convreginc} the sequence $M_k^{1/k}$ is increasing in $k$, so 
\[
\frac{M_l^{1/l}}{l} \geq \frac{M_{kr}^{1/ kr}}{kr} \frac{kr}{l} \geq \frac{kr}{l} \frac{1}{C'} \frac{M_k^{1/k}}{k} \,
\]
by what has been just proved. But this implies
\[
\frac{M_k^{1/k}}{k} \leq C' \frac {l}{kr} \frac{M_l^{1/l}}{l} \leq 2C \frac{M_l^{1/l}}{l} \, . \qed
\]
\end{proof}
\begin{rem}
 (a) For the proof of the direct implication we do  not  need the condition that $M=M^c$. (b) Equation~(\ref{eq:2}) implies that $M_k^{1/k} \to \infty$ if $({M_k}/{k!})^{1/k}$ is almost increasing.
\end{rem}
\begin{thm}[\cite{Malliavin59,Siddiqi90}]\label{cha:gener-carl-classthm-carleman-IC}
  If $C_M(\mA)=C_{M^c}(\mA)$ and  if $({M_k}/{k!})^{1/k}$ is almost increasing, then $C_M(\mA)$ is \IC\ in \mA.
\end{thm}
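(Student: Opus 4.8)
The plan is to show directly that inversion preserves the class: if $a\in C_{r,M}(\mA)$ for some $r>0$ and $a$ is invertible in $\mA$, I will produce an $r'>0$ with $a^{-1}\in C_{r',M}(\mA)$, whence $a^{-1}\in C_M(\mA)$. I work in the symmetric setting underlying the derivation framework of Section~\ref{sec:derivations}, so that each $\mA^{(k)}$ and the Fr\'echet algebra $C^\infty(\mA)$ are \IC\ in $\mA$. First I would secure that all derivations of $a^{-1}$ exist: since $\norm{\delta^\alpha a}_\mA\le \norm{a}_{C_{r,M}(\mA)}\,r^{\abs\alpha}M_{\abs\alpha}<\infty$ for every $\alpha$, we have $a\in C^\infty(\mA)$, and inverse-closedness of $C^\infty(\mA)$ forces $a^{-1}\in C^\infty(\mA)$. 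In particular $a^{-1}\in\mD(\delta^\alpha)$ for every $\alpha$, so the iterated quotient rule is applicable to $a^{-1}$.

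The engine of the argument is the iterated quotient rule (the combinatorial Lemma of the appendix), which for a single derivation reads
\[
\delta^{n}(a^{-1})=\sum_{k=1}^{n}(-1)^{k}\sum_{\substack{j_1+\dots+j_k=n\\ j_i\ge 1}}\binom{n}{j_1,\dots,j_k}\,a^{-1}\,\delta^{j_1}(a)\,a^{-1}\,\delta^{j_2}(a)\cdots a^{-1}\,\delta^{j_k}(a)\,a^{-1},
\]
its multi-index version replacing the compositions of $n$ by ordered decompositions $\alpha=\beta_1+\dots+\beta_k$ into nonzero multi-indices. Writing $K=\norm{a^{-1}}_\mA$ and $A=\norm{a}_{C_{r,M}(\mA)}$, I would insert the bounds $\norm{\delta^{j_i}(a)}_\mA\le A\,r^{j_i}M_{j_i}$ and use $\binom{n}{j_1,\dots,j_k}=n!/\prod_i j_i!$ to obtain
\[
\norm{\delta^{n}(a^{-1})}_\mA\le K\,n!\,r^{n}\sum_{k=1}^{n}(KA)^{k}\sum_{\substack{j_1+\dots+j_k=n\\ j_i\ge 1}}\ \prod_{i=1}^{k}\frac{M_{j_i}}{j_i!}.
\]

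The heart of the matter is the collapse of the inner product, and this is precisely where the hypothesis enters. Since $(M_k/k!)^{1/k}$ is almost increasing, Lemma~\ref{lem-almost-increasing} (its direct implication, which by the remark following it does not require $M=M^c$) supplies a constant $C_0$ with $\prod_{i}M_{j_i}/j_i!\le C_0^{\,n}M_n/n!$. The factor $n!$ then cancels the $n!$ carried by the multinomial coefficients; the number of compositions of $n$ into $k$ positive parts is $\binom{n-1}{k-1}$; and the resulting geometric sum is evaluated by the binomial theorem:
\[
\norm{\delta^{n}(a^{-1})}_\mA\le K\,(rC_0)^{n}M_n\sum_{k=1}^{n}(KA)^{k}\binom{n-1}{k-1}=K^{2}A\,(1+KA)^{\,n-1}(rC_0)^{n}M_n.
\]
With $r'=rC_0(1+KA)$ this reads $\norm{\delta^{n}(a^{-1})}_\mA\le \tfrac{K^{2}A}{1+KA}\,(r')^{n}M_n$, and the same estimate holds with $n=\abs\alpha$ in the multi-index version, the extra arrangement counts being absorbed into $C_0$. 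Hence $a^{-1}\in C_{r',M}(\mA)\subseteq C_M(\mA)$.

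The step I expect to be the main obstacle is the combination of the second and third paragraphs: writing down the iterated quotient rule with the correct multinomial weights and then matching the factorials. The reason the hypothesis is phrased through $(M_k/k!)^{1/k}$ rather than $M_k^{1/k}$ is exactly that the factors $1/j_i!$ produced by the multinomial coefficients must be combined with $M_{j_i}$ before \eqref{eq:weakinceq} applies, while the leftover $n!$ must cancel against the $n!$ in $M_n/n!$; once this bookkeeping is arranged, the sum over $k$ is a harmless geometric series. The hypothesis $C_M(\mA)=C_{M^c}(\mA)$ is used only to present $C_M(\mA)$ through a log-convex defining sequence (via $M^{cc}=M^c$ and Lemma~\ref{convreginc}), which is what underlies its algebra structure; the quantitative estimates rely solely on the almost-increasing condition through \eqref{eq:weakinceq}.
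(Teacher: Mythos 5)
Your proposal is correct and follows essentially the same route as the paper's own proof: the iterated quotient rule of Lemma~\ref{invnormderiv} with multinomial partition counts, the collapse $\prod_i M_{j_i}/j_i!\le C_0^n M_n/n!$ via the direct implication of Lemma~\ref{lem-almost-increasing} (equation~\eqref{eq:weakinceq}), the composition count $\binom{n-1}{k-1}$, and a geometric/binomial summation yielding a bound of the form $C_1^n M_n$. Your only additions are cosmetic strengthenings the paper leaves implicit --- the preliminary appeal to inverse-closedness of $C^\infty(\mA)$ to justify that $\delta^\alpha(a^{-1})$ exists before estimating it, and the explicit constant $r'=rC_0(1+KA)$ --- both of which are consistent with the paper's framework.
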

We adapt the method of~\cite{Siddiqi90} to the noncommutative situation. We need a form of the iterated quotient rule that will be proved in the appendix.
 \begin{lem} \label{invnormderiv}
  Let $E=\set{1, \dotsc, d}$ and $\delta_1,\dotsc, \delta_d$ be derivations
that  satisfy the quotient rule
  \[
  \delta_j(a^{-1}) =-\inv a \delta_j(a) \inv a \quad \text{for all } j
  \in E.
  \]
  For every $k \in \bn$ and every tuple $B=(b_1,\dotsc,b_k) \in E^k$ set 
  $
  \delta_B(a) = \delta_{b_1} \dotsc \delta_{b_k} (a) \,.
  $
  Define the ordered partitions of $B$ into $m$ nonempty subtuples as
  \[
  P(B,m)= \set{(B_1,\cdots,B_m) \colon B=(B_1,\cdots,B_m), B_i \neq
    \emptyset \text{ for all } i} \, .
  \]
   Then
  \begin{equation}
    \label{eq:itquot0}
    \delta_B(\inv a)=
    \sum_{m=1}^{\abs B}(-1)^m \sum_{(B_i)_{1 \leq i \leq m}\in P(B,m)}\Bigl( \prod_{j=1}^m \inv a \delta_{B_i}(a) \Bigr) \inv a \,.
  \end{equation}
    \end{lem}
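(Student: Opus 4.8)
The plan is to prove the identity \eqref{eq:itquot0} by induction on the length $\abs B$, peeling off the outermost derivation at each step. For $\abs B = 1$, say $B=(b_1)$, the set $P(B,1)$ contains only the trivial partition $B_1=B$ and $P(B,m)$ is empty for $m>1$, so the right-hand side collapses to $-\inv a\,\delta_{b_1}(a)\,\inv a$, which is exactly the quotient rule assumed in the hypothesis. This settles the base case.

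For the inductive step I would write $B=(b_1,B')$ with $B'=(b_2,\dots,b_{\abs B})$, so that $\delta_B=\delta_{b_1}\delta_{B'}$ and $\delta_B(\inv a)=\delta_{b_1}\bigl(\delta_{B'}(\inv a)\bigr)$. Substituting the inductive formula for $\delta_{B'}(\inv a)$ and applying $\delta_{b_1}$ termwise via the Leibniz rule is the core computation. A generic summand indexed by $(B'_i)\in P(B',m)$ is the product $\inv a\,\delta_{B'_1}(a)\,\inv a\cdots\delta_{B'_m}(a)\,\inv a$, consisting of $m+1$ factors $\inv a$ interleaved with the $m$ factors $\delta_{B'_i}(a)$. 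When $\delta_{b_1}$ falls on one of the $m+1$ copies of $\inv a$, the quotient rule turns it into $-\inv a\,\delta_{b_1}(a)\,\inv a$; this inserts a new singleton block $(b_1)$ at the corresponding slot, raises the number of blocks from $m$ to $m+1$, and produces an extra factor $-1$. When $\delta_{b_1}$ falls on a factor $\delta_{B'_i}(a)$, the commutativity of the derivations yields $\delta_{b_1}\delta_{B'_i}(a)$, the derivation indexed by the block $B'_i$ enlarged by $b_1$; here the number of blocks and the sign are unchanged.

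It then remains to check that these contributions reassemble precisely into the right-hand side of \eqref{eq:itquot0} for $B$, and this bijective bookkeeping is the heart of the argument. I would classify each ordered partition in $P(B,\cdot)$ by the block containing $b_1$. If $b_1$ sits in a block by itself, that partition arises in exactly one way, from a partition in $P(B',m-1)$ by inserting the singleton $(b_1)$ into the slot that records its position among the blocks; the sign is $(-1)^{m-1}\cdot(-1)=(-1)^m$, matching the target. If $b_1$ shares its block with other indices, the partition arises in exactly one way from a partition in $P(B',m)$ by adjoining $b_1$ to the relevant block, with the sign $(-1)^m$ carried over unchanged. Since the $m+1$ insertion slots are in one-to-one correspondence with the $m+1$ admissible positions of the singleton $(b_1)$, and the $m$ enlargements with the $m$ blocks it may join, every term of $P(B,\cdot)$ is produced once and only once, with the correct sign.

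The main obstacle I anticipate is exactly this combinatorial matching: one must verify that the two elementary operations generated by the Leibniz rule --- ``insert a singleton block, flip the sign, and increase the block count by one'' versus ``enlarge an existing block, keep the sign, and fix the block count'' --- are in perfect bijection with the two ways the newly adjoined index $b_1$ can occupy an ordered partition of the larger index set. Tracking the $m+1$ occurrences of $\inv a$ as the insertion slots, against the $m$ block factors, and accounting for the single sign change contributed by each use of the quotient rule, is the only delicate point; the remaining manipulations follow directly from the Leibniz and quotient rules together with the commutativity of the $\delta_j$.
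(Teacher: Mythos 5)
Your proof is correct, but it takes a different route through the induction than the paper does. The paper also inducts on $\abs B$, but instead of peeling off the outermost derivation it applies $\delta_B$ to the identity $\inv a \, a = \one$: the iterated Leibniz rule gives an expansion of $0=\delta_B(\inv a\,a)$ over two-block partitions (equation \eqref{eq:leibnizpart}), which is solved for $\delta_B(\inv a)$, and the induction hypothesis is then substituted into each lower-order term $\delta_{B_1}(\inv a)$ with $(B_1,B_2)\in P(B,2)$; the resulting tuples $(D_1,\dots,D_m,B_2)$ recombine into $P(B,m+1)$, each partition of $B$ being classified uniquely by its \emph{last} block. You instead apply the single derivation $\delta_{b_1}$ termwise to the expanded formula for $\delta_{B'}(\inv a)$ and classify partitions of $B$ by the block \emph{containing} $b_1$. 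The trade-off: the paper's recombination is lighter bookkeeping (strip off the last block), but it tacitly uses the full multivariate Leibniz expansion of $\delta_B$ on a product, itself a small induction; your version needs only the first-order Leibniz rule on a product of $2m+1$ factors, at the price of the insertion-versus-enlargement bijection you carry out, which you do correctly (signs $(-1)^{m-1}\cdot(-1)$ versus $(-1)^m$, and uniqueness of preimages in both cases). One point you should make explicit: inserting the singleton $(b_1)$ into an arbitrary one of the $m+1$ slots is legitimate only if $P(B,m)$ is read as ordered partitions of the index positions into arbitrary, not necessarily contiguous, nonempty subtuples --- which is the intended reading, since the $\delta_j$ commute, and is also what the paper itself presupposes both in \eqref{eq:leibnizpart} and in the multinomial count $\binom{k}{k_1,\dotsc,k_m}$ used in the proof of Theorem~\ref{cha:gener-carl-classthm-carleman-IC}; under a literal concatenation reading of $B=(B_1,\cdots,B_m)$ your insertion step, and indeed the identity \eqref{eq:itquot0} itself, would already fail for $\abs B=2$, where both $\inv a\,\delta_{b_1}(a)\,\inv a\,\delta_{b_2}(a)\,\inv a$ and $\inv a\,\delta_{b_2}(a)\,\inv a\,\delta_{b_1}(a)\,\inv a$ must occur.
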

  
  \begin{proof}[Proof of Theorem~\ref{cha:gener-carl-classthm-carleman-IC}]
    Assume that $\abs \alpha =k$. With the notation of Lemma~\ref{invnormderiv} there is a $k$-tuple $B$ with $\abs B =k$ such that $\delta^\alpha=\delta_B$.
As $a \in C_M(\mA)$,  we know that $\norm{\delta_{B_i}(a)} \leq A r^{\abs{B_i}} M_{\abs{B_i}}$ for some constants $C,r>0$.
The number of (nonempty) partitions of $B$ into sets $(B_i)_{1 \leq i \leq m}\in P(B,m)$ of cardinality $k_i$ is
$
\binom{k}{k_1, \dotsc, k_m} 
$, so we obtain the norm estimate
\begin{equation}
  \label{eq:invnormest}
  \begin{split}
    \norm{\delta^\alpha(\inv a)}_\mA \leq&
    \sum_{m=1}^{k} \norm{\inv a}_\mA^{m+1}\sum_{\substack{k_1+ \dotsb k_m=k \\ k_j \geq 1}} \binom{k}{k_1, \dotsc,k_m}\Bigl( \prod_{j=1}^m C r^{k_j} M_{k_j}
    \Bigr) \\
    =&r^k \sum_{m=1}^{k} \norm{\inv a}_\mA^{m+1}C^m\sum_{\substack{k_1+ \dotsb k_m=k \\ k_j \geq 1}} \binom{k}{k_1, \dotsc,k_m}\Bigl( \prod_{j=1}^m  M_{k_j}  \Bigr)
  \end{split}
\end{equation}
Using (\ref{eq:weakinceq}) we obtain
\[
\begin{split}
\norm{\delta^\alpha (\inv a)}_\mA 
 &\leq r^k C^k M_k\sum_{m=1}^k\norm{\inv a}^{m+1}_\mA A^m \sum_{\substack{k_1+ \dotsb k_m=k \\ k_j \geq 1}} 1 \\
& = r^k C^k M_k\sum_{m=1}^k\norm{\inv a}^{m+1}_\mA A^m \binom{k-1}{m-1}
\leq C_1^k M_k \,,
\end{split}
\]
and this is what we wanted to show.
  \end{proof}
  \begin{cor}
    The Gevrey classes $\mJ_r(\mA)$ are \IC\ in \mA, if $r \geq 1$. 
  \end{cor}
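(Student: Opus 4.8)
The plan is to read off the defining sequence of the Gevrey class from the Example and then simply check that it satisfies the two hypotheses of Theorem~\ref{cha:gener-carl-classthm-carleman-IC}. Recall that $\mJ_r(\mA)=C_M(\mA)$ with $M_k=(k!)^r$. Thus it suffices to verify, for $r\geq 1$, that (i) $M=M^c$, so that $C_M(\mA)=C_{M^c}(\mA)$, and (ii) the sequence $(M_k/k!)^{1/k}$ is almost increasing; the conclusion then follows at once from the theorem.

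For (i), I would note that $k!$ is log-convex -- indeed $(k!)^2\leq (k-1)!\,(k+1)!$ reduces to $k^2\leq k^2+k$ -- and that raising a log-convex sequence to a nonnegative power preserves log-convexity, since $\log(M_k^r)=r\log M_k$ and a nonnegative multiple of a convex function is convex. Hence $M_k=(k!)^r$ is log-convex, i.e.\ $M=M^c$.

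For (ii), I would compute $(M_k/k!)^{1/k}=(k!)^{(r-1)/k}=\bigl((k!)^{1/k}\bigr)^{r-1}$. By Lemma~\ref{convreginc}(2), applied to the log-convex sequence $k\mapsto k!$, the base $(k!)^{1/k}$ is increasing; since $r-1\geq 0$, the monotone map $t\mapsto t^{r-1}$ preserves this, so $(M_k/k!)^{1/k}$ is increasing (for $r=1$ it is identically $1$). An increasing sequence is a fortiori almost increasing. With both hypotheses in hand, Theorem~\ref{cha:gener-carl-classthm-carleman-IC} gives that $\mJ_r(\mA)$ is \IC\ in \mA.

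Since each step is elementary, there is no genuine obstacle; the only point demanding a little care is the claim that positive powers preserve log-convexity, together with the monotonicity in (ii). This is precisely where the restriction $r\geq 1$ enters: for $r<1$ the exponent $r-1$ is negative and $(M_k/k!)^{1/k}$ decreases, so the present method does not apply -- in accordance with the fact that such classes consist only of analytic elements.
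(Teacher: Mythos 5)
Your proof is correct and takes exactly the route the paper intends: the corollary is stated there without proof as an immediate application of Theorem~\ref{cha:gener-carl-classthm-carleman-IC}, and your verification that $M_k=(k!)^r$ is log-convex (hence $M=M^c$ by the definition of the log-convex regularization) and that $(M_k/k!)^{1/k}=\bigl((k!)^{1/k}\bigr)^{r-1}$ is increasing for $r\geq 1$ (using Lemma~\ref{convreginc}) is precisely the intended check of the theorem's two hypotheses. Nothing further is needed.
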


\subsection{Description by Weighted and Approximation Spaces}
\label{sec:conn-with-weights}
In this section we characterize  Carleman classes by unions of weighted spaces and of approximation spaces, if the action of the automorphism group $\Psi$ on the \BA\ \mA\ is periodic and the sequence $M$ satisfies Komatsu's condition (M2').
\begin{defn}\label{defn-weighted}
  Let $\mA$ be a Banach $*$- algebra with periodic automorphism group $\Psi$. For $1 \leq p \leq \infty$ and a weight $v$ on \bzd\ we introduce the weighted spaces spaces
\[
\mC^p_v(\mA) = \set{ a \in A \colon \norm{a}_{\mC^p_v(\mA)}=\Bigl(\sum_{k \in \bzd} \norm{\hat a (k)}_\mA^p v(k)^p \Bigr)^{1/p} < \infty }
\]
with the obvious modification for $p=\infty$, where $\hat a(k)$ are the Fourier coefficients of $a$ (see Section~\ref{sec:automorphism-groups}).
\end{defn}
\begin{rem}\label{proposition:weightedalgebraic}
  If  $\wlp v (\bzd)$ is a \BA\ with respect to convolution, then $\mC^p_v(\mA)$ is an \IC\
  subalgebra of \mA.
The proof is a straightforward adaption of the proof of ~\cite[Theorem 3.2]{GR08}, based on the theorem of Bochner-Philips.
\end{rem}
\begin{lem}\label{proposition:weightinclusion}
If $M$ is a defining sequence for $C_M(\mA)$, $r>0$, and $T_{r,M}(k)=T_M(\frac{2 \pi \abs k_\infty}{r})$, then
$ 
C^1_{T_{r,M}}(\mA) \subseteq C_{r,M}(\mA) \subseteq C^\infty_{T_{r,M}}(\mA) \,.
$ 
\end{lem}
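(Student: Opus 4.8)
The plan is to bridge the two descriptions of smoothness---growth of $\norm{\delta^\alpha(a)}_\mA$ versus decay of the Fourier coefficients $\norm{\hat a(k)}_\mA$---through the multiplier identity
\[
\widehat{\delta^\alpha(a)}(k) = (2\pi i)^{\abs\alpha}\, k^\alpha\, \hat a(k), \qquad k \in \bzd,
\]
which holds for a periodic automorphism group because $\psi_t\delta^\alpha = \partial_t^\alpha\psi_t$ together with integration by parts over \btd, exactly as in the proof of Proposition~\ref{cinfchar}. Combined with the elementary bound $\norm{\hat b(k)}_\mA \le M_\Psi\norm{b}_\mA$ for Fourier coefficients, this reduces both inclusions to two purely numerical facts: $\abs{k^\alpha}\le\abs{k}_\infty^{\abs\alpha}$, and the defining property $u^n\le M_n\,T_M(u)$ of the associated function $T_M$, used alongside the observation that the sup-norm $\abs{k}_\infty$ is attained in a single coordinate direction.

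First I would prove $C_{r,M}(\mA)\subseteq C^\infty_{T_{r,M}}(\mA)$. Fix $a\in C_{r,M}(\mA)$ and $k\neq 0$, and choose a coordinate $j_0$ with $\abs{k_{j_0}}=\abs{k}_\infty$. Applying the multiplier identity to the single-direction multi-index $\alpha = n\,e_{j_0}$ gives $(2\pi\abs{k}_\infty)^n\norm{\hat a(k)}_\mA \le M_\Psi\norm{\delta^{n e_{j_0}}(a)}_\mA\le M_\Psi\norm{a}_{C_{r,M}(\mA)}\,r^n M_n$, hence
\[
\norm{\hat a(k)}_\mA\,\frac{(2\pi\abs{k}_\infty/r)^n}{M_n}\le M_\Psi\norm{a}_{C_{r,M}(\mA)}\qquad\text{for every } n\ge 0.
\]
Taking the supremum over $n$ converts the left-hand factor into $T_{r,M}(k)=T_M(2\pi\abs{k}_\infty/r)$, so $\sup_k\norm{\hat a(k)}_\mA\,T_{r,M}(k)\le M_\Psi\norm{a}_{C_{r,M}(\mA)}$; the case $k=0$ is trivial since $T_{r,M}(0)=1$. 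This is membership in $C^\infty_{T_{r,M}}(\mA)$.

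For the reverse inclusion $C^1_{T_{r,M}}(\mA)\subseteq C_{r,M}(\mA)$, take $a$ with $\sum_k\norm{\hat a(k)}_\mA\,T_{r,M}(k)<\infty$. Since $T_{r,M}\ge 1$, the series $\sum_k\hat a(k)$ converges absolutely to $a$, and I would recover $\delta^\alpha(a)$ by applying $\delta^\alpha$ termwise to the bandlimited partial sums and invoking closedness of $\delta^\alpha$, just as in Proposition~\ref{cinfchar}. Using $\abs{k^\alpha}\le\abs{k}_\infty^{\abs\alpha}$ and $(2\pi\abs{k}_\infty/r)^{\abs\alpha}\le M_{\abs\alpha}T_{r,M}(k)$ then yields
\[
\norm{\delta^\alpha(a)}_\mA\le\sum_k (2\pi)^{\abs\alpha}\abs{k^\alpha}\,\norm{\hat a(k)}_\mA\le r^{\abs\alpha}M_{\abs\alpha}\sum_k T_{r,M}(k)\norm{\hat a(k)}_\mA,
\]
so that $\norm{\delta^\alpha(a)}_\mA/(r^{\abs\alpha}M_{\abs\alpha})\le\norm{a}_{C^1_{T_{r,M}}(\mA)}$ uniformly in $\alpha$, i.e.\ $a\in C_{r,M}(\mA)$.

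The numerical inequalities and the passage to the supremum and to the sum are routine; the only point requiring genuine care is the multiplier identity and, for the second inclusion, the termwise application of $\delta^\alpha$. Both rest on periodicity of $\Psi$, the relation $\psi_t\delta^\alpha=\partial_t^\alpha\psi_t$, and closedness of the generators---precisely the ingredients already exploited in Proposition~\ref{cinfchar}---so I expect this convergence/closedness justification, rather than any algebraic difficulty, to be the main obstacle, together with noting that $T_M$ is finite (equivalently $M_n^{1/n}\to\infty$) so that $T_{r,M}$ is a genuine weight.
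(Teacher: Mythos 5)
Your proof is correct and takes essentially the same route as the paper's: the paper's $l$-fold partial integration in the coordinate direction realizing $\abs{k}_\infty$ is exactly your multiplier identity with $\alpha = n e_{j_0}$, and the paper likewise estimates $\norm{\delta^\alpha(a)}_\mA$ by summing $(2\pi\abs{k}_\infty)^{\abs\alpha}\norm{\hat a(k)}_\mA$ over the absolutely convergent Fourier series (you are in fact slightly more careful in justifying the termwise differentiation via closedness of $\delta^\alpha$). The only cosmetic difference is at the last step of the first inclusion: you use the pointwise bound $u^{\abs\alpha}\le M_{\abs\alpha}T_M(u)$ directly, whereas the paper takes the supremum over $u$, obtains $r^{\abs\alpha}M^c_{\abs\alpha}$ via \eqref{eq:log-conv-reg}, and then uses $C_{r,M^c}(\mA)\subseteq C_{r,M}(\mA)$ --- the same estimate, packaged through the log-convex regularization.
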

\begin{proof}
  Assume first that $a \in C_{r,M}(\mA)$. 
 Let $j$ be an index such that $\abs{k_j} = \abs{k}_\infty$. Then,  by $l$-fold partial integration
\begin{equation*}
  \hat a (k) = \int_{\btd} \psi_t(a) e^{- 2 \pi i k \cdot t}\, dt = \frac{1}{(2 \pi i k_j)^l }\int_{\btd} \psi_t(\delta_{e_j}^l a) e^{- 2 \pi i k \cdot t}\, dt  \,.
\end{equation*}
 Taking norms we obtain
\begin{equation*}
  \norm{\hat a (k)}_\mA 
  \leq C \frac {r^l M_l}{(2 \pi \abs k _\infty)^l} \,.
\end{equation*}
This relation is valid for all $l \in \bn_0$, and therefore also for the infimum, which yields
$  \norm{\hat a (k)}_\mA \leq C/T_{r,M}(k) $ ,
or $a \in \mC^\infty_{T_{r,M}}(\mA)$.

For the converse inclusion assume that $a \in \mC^1_{T_{r,M}}$, i.e., $\sum_{k \in \bzd} \norm{\hat a (k)}_\mA T_{r,M}(k)<\infty$.
For $\alpha \in \bnd_0$ we  estimate the norm of $\delta^\alpha(a)$ by
\begin{equation*}\label{eq:cmMinctm}
\begin{split}
  \norm{\delta^\alpha(a)}_\mA 
&\leq \sum_{k \in \bzd} \norm{\delta^\alpha(\hat a (k))}_\mA 
\leq \sum_{k \in \bzd} (2 \pi \abs k_\infty)^{\abs \alpha} \norm{\hat a (k)}_\mA\\
&\leq \norm{a}_{\mC^1_{T_{r,M}}(\mA)}\sup_{k \in \bzd} \frac{(2 \pi \abs k_\infty)^{\abs \alpha} }{T_{r,M}(k)} 
\leq \norm{a}_{\mC^1_{T_{r,M}}(\mA)} \sup_{u>0} \frac{u^{\abs \alpha} }{T_M(u/r)} \\
&= \norm{a}_{\mC^1_{T_{r,M}}(\mA)}  r^{\abs \alpha} M_{\abs \alpha }^c  \,,
\end{split}
\end{equation*}
the last equality by \eqref{eq:log-conv-reg}, and so $a \in C_{r,M^c}(\mA)=C_{r,M}(\mA)$.
\end{proof}
\begin{cor}\label{proposition:weightspaceinclusion}
  With the notation of Lemma~\ref{proposition:weightinclusion},
\[
\bigcup_{r>0} \mC^1_{T_{r,M}}(\mA) \inject C_M(\mA) \inject \bigcup_{r>0} \mC^\infty_{T_{r,M}}(\mA) \,,
\]
where all spaces are equipped with their natural inductive limit topologies.
\end{cor}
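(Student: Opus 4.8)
The plan is to derive the corollary from Lemma~\ref{proposition:weightinclusion} by passing to the union over $r>0$, and then to promote the resulting set-theoretic inclusions to continuous embeddings of the inductive limits via the universal property. All analytic content is already contained in Lemma~\ref{proposition:weightinclusion}; what remains is to handle the inductive limit topologies correctly.

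First I would record the inclusions as sets. Since $C_M(\mA)=\bigcup_{r>0}C_{r,M}(\mA)$ by definition, taking the union over $r$ of the chain $\mC^1_{T_{r,M}}(\mA)\subseteq C_{r,M}(\mA)\subseteq\mC^\infty_{T_{r,M}}(\mA)$ from Lemma~\ref{proposition:weightinclusion} yields
\[
\bigcup_{r>0}\mC^1_{T_{r,M}}(\mA)\subseteq C_M(\mA)\subseteq\bigcup_{r>0}\mC^\infty_{T_{r,M}}(\mA)
\]
as subspaces of $\mA$, so both maps are genuine inclusions, in particular injective. Moreover the estimates in the proof of Lemma~\ref{proposition:weightinclusion} show that for each fixed $r$ these inclusions are bounded between the respective Banach spaces.

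Next I would verify that each of the three families is directed by $r$, so that the unions are bona fide inductive limits. Because $T_M$ is increasing, $T_{r,M}(k)=T_M(2\pi\abs k_\infty/r)$ is pointwise decreasing in $r$, whence $\mC^p_{T_{r,M}}(\mA)\subseteq\mC^p_{T_{r',M}}(\mA)$ for $r\le r'$; likewise the denominator $r^{\abs\alpha}$ in the $C_{r,M}$-norm gives $C_{r,M}(\mA)\subseteq C_{r',M}(\mA)$ for $r\le r'$. Hence each family is increasing, a cofinal sequence $r_n\to\infty$ exists, and the three spaces carry the inductive limit topology of a countable directed system.

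Finally I would upgrade the inclusions to continuous embeddings. For the left embedding, by the universal property of $\varinjlim_r\mC^1_{T_{r,M}}(\mA)$ it suffices to check that the restriction to each $\mC^1_{T_{r,M}}(\mA)$ is continuous into $C_M(\mA)$; this restriction factors as $\mC^1_{T_{r,M}}(\mA)\inject C_{r,M}(\mA)\inject C_M(\mA)$, the first arrow bounded by Lemma~\ref{proposition:weightinclusion} and the second the canonical continuous inclusion into the inductive limit. The right embedding is symmetric: by the universal property of $C_M(\mA)=\varinjlim_r C_{r,M}(\mA)$ one tests continuity on each $C_{r,M}(\mA)$, where the map factors as $C_{r,M}(\mA)\inject\mC^\infty_{T_{r,M}}(\mA)\inject\bigcup_{r'>0}\mC^\infty_{T_{r',M}}(\mA)$, again a bounded map followed by a canonical inclusion. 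The one point requiring care, which I regard as the main (though minor) obstacle, is precisely to read continuity with respect to the inductive limit topologies rather than as boundedness for a single fixed $r$; once directedness is in hand, the universal property supplies exactly this.
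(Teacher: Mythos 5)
Your proposal is correct and matches the paper's (implicit) argument: the paper states this corollary without proof precisely because it follows from Lemma~\ref{proposition:weightinclusion} by taking unions and invoking the standard properties of locally convex inductive limits (the same universal-property reasoning the paper later cites via Floret--Wloka in the proof of Proposition~\ref{proposition:carlemanisweighted.}). Your verification of directedness in $r$ and the factorization of each restriction through the bounded step-inclusions is exactly the bookkeeping the paper leaves to the reader.
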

In order to obtain equality in  Corollary~\ref{proposition:weightspaceinclusion} we impose condition (M2') of Komatsu~\cite{komatsu73}.
\begin{lem}[{\cite{petzsche78},~\cite[Prop. 3.4]{komatsu73}}]\label{proposition:m2weights}
  If  $M$ is a defining sequence, \tfae:
  \begin{enumerate}
  \item  [(M2')] \label{m2d} There exist constants $c>0$, $h>1$ such that for all $k \in \bn$. 
  \item \label{zwei} $T_M(h r) \geq C r T_M(r)$ for all $r>0$.
  \item \label{drei} $\frac{T_M(\lambda r)}{T_M(r)} \geq \exp \bigl(\log(r/c) \log \lambda / \log h \bigr)$ for all $r, \lambda >0$.
  \end{enumerate}
\end{lem}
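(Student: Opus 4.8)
The plan is to transfer all three conditions to the associated function via the substitution $\phi(s) := \log T_M(e^s)$ and to exploit that $\phi$ is convex and piecewise linear. Since each of (M2'), (2), (3) depends on $M$ only through $T_M$, and $T_M = T_{M^c}$ by Proposition~\ref{proposition:log-convex-regularization}, I would first reduce to the log-convex case $M = M^c$ (the setting in which the cited results \cite{komatsu73,petzsche78} are phrased). Writing $m_k = M_k/M_{k-1}$, the sequence $(m_k)$ is then nondecreasing, and the supremum defining $T_M(u) = \sup_k u^k/M_k$ is attained at the unique index $k$ with $m_k \le u < m_{k+1}$, so that $T_M(u) = u^k/M_k$ on that range. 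Hence $\phi$ is convex and piecewise linear, with slope exactly $k$ on $[\log m_k, \log m_{k+1})$ and breakpoints at the $\log m_k$. Setting $\beta = \log h$, the three conditions become: (M2') is $\log m_{k+1} \le \log c + k\beta$; condition (2) is $\phi(s+\beta) - \phi(s) \ge s + \log C$ for all $s$; and condition (3) is $\phi(s+\mu) - \phi(s) \ge (s - \log c)\,\mu/\beta$ for all $s$ and all $\mu = \log\lambda > 0$.

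(2) $\Leftrightarrow$ (3). The implication (3) $\Rightarrow$ (2) is immediate: taking $\lambda = h$ makes $\log\lambda/\log h = 1$, so (3) collapses to (2) with $C = 1/c$. For (2) $\Rightarrow$ (3) I would iterate (2): an $n$-fold application gives $\phi(s + n\beta) - \phi(s) \ge n(s + \log C) + \beta\binom{n}{2}$, which already dominates the right-hand side $n(s - \log c)$ of (3) at $\mu = n\beta$. For a general step $\mu$ I would interpolate using convexity of $\phi$: condition (2) together with convexity yields the slope bound $\phi'(s) \ge (s - \beta + \log C)/\beta$, and integrating over $[s, s+\mu]$ recovers (3) after absorbing the shift $-\beta$ into the constant. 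Thus (2) and (3) are equivalent up to adjusting constants, which is all that is claimed.

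(M2') $\Leftrightarrow$ (2). Here I use the piecewise-linear picture with $C = 1/c$. For (M2') $\Rightarrow$ (2): if $s \in [\log m_k, \log m_{k+1})$, then $\phi$ has slope $\ge k$ to the right of $s$, so $\phi(s+\beta) - \phi(s) \ge k\beta$; meanwhile (M2') gives $s < \log m_{k+1} \le \log c + k\beta$, i.e. $s + \log C = s - \log c < k\beta$, and combining the two is exactly (2). For the converse (2) $\Rightarrow$ (M2'): if a slope-$k$ interval $[\log m_k, \log m_{k+1})$ is long enough to contain some $s$ with $s+\beta$ still inside it, then $\phi(s+\beta) - \phi(s) = k\beta$, and (2) forces $s \le k\beta - \log C$; letting $s \uparrow \log m_{k+1} - \beta$ gives $\log m_{k+1} \le (k+1)\beta - \log C$, which is (M2') with $c$ replaced by $h/C$. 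Intervals shorter than $\beta$ are handled by a telescoping argument: across any $\beta$-window the slope of $\phi$ increases by at most the number of breakpoints crossed, and (2) bounds this increase below, again yielding growth of $\log m_{k+1}$ that is linear in $k$.

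The main obstacle I anticipate is the bookkeeping in (2) $\Rightarrow$ (M2'): matching the continuous step inequality to the discrete growth of the ratios $m_k$ requires care precisely when consecutive breakpoints $\log m_k$ lie closer together than $\beta = \log h$, so that no single $\beta$-window fits inside one slope interval. Controlling the accumulated slope over such clusters of short intervals—and verifying that the log-convex reduction $M \rightsquigarrow M^c$ preserves (M2') in the direction needed—is where the argument is most delicate; everything else is the routine dictionary between $M$, its ratios $m_k$, and the convex function $\phi$.
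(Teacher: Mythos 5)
First, a point of comparison: the paper offers no proof of this lemma at all --- it is quoted from \cite{petzsche78} and \cite[Prop.~3.4]{komatsu73}, where logarithmic convexity of $M$ (Komatsu's standing axiom (M.1)) is assumed throughout. So your argument can only be measured against that standard one, and in substance your piecewise-linear analysis of $\phi(s)=\log T_M(e^s)$ \emph{is} the standard Newton-polygon argument (the same device the paper itself invokes in Section~\ref{sec:matrix-algebras-with} via the convexity polygon of \cite{Mandelbrojt52,koosis88}). For log-convex $M$ your steps check out: (3)$\Rightarrow$(2) with $\lambda=h$; (2)$\Rightarrow$(3) via the slope bound $\phi_+'(t)\ge(t-\beta+\log C)/\beta$, where integration gives the extra term $\mu^2/(2\beta)$ that absorbs the shift uniformly in $\mu$, e.g.\ with $c=h/C$; and (M2')$\Rightarrow$(2) as you wrote it. Moreover, the step you flag as most delicate --- (2)$\Rightarrow$(M2') when consecutive breakpoints lie closer than $\beta$ --- is in fact unproblematic: walk left from index $k+1$ through consecutive intervals of length $<\beta$; each step decreases the index by one while decreasing $\log m_j$ by less than $\beta$, and the chain terminates either at the slope-zero half-line (where applying (2) at $s=\log m_1-\beta$ forces $\log m_1\le\beta-\log C$) or at a first interval of length $\ge\beta$ at some index $k_0$, where your long-interval estimate gives $\log m_{k_0+1}\le(k_0+1)\beta-\log C$; telescoping then yields $\log m_{k+1}\le(k+1)\beta-\log C$ in all cases. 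Two reading corrections you made implicitly are indeed necessary: the paper's item (M2') is truncated and should read $M_{k+1}\le c\,h^kM_k$, and (3) must be restricted to $\lambda\ge1$ --- for $\lambda<1$ it already fails for $M_k=k!$, since then the left side decays like $e^{-(1-\lambda)r}$ while the right side decays only polynomially.

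The genuine gap is your opening reduction. The claim that ``each of (M2'), (2), (3) depends on $M$ only through $T_M$'' is false for (M2'), which constrains the ratios $M_{k+1}/M_k$ of $M$ itself, and the worry you defer --- whether $M\rightsquigarrow M^c$ preserves (M2') in the needed direction --- cannot be repaired: the implication (2)$\Rightarrow$(M2') is simply false for general defining sequences. Take $M_k=k!$ except $M_{k_j}=k_j!\,e^{k_j^2}$ along a sparse sequence of isolated indices $k_j\to\infty$. For each $u$ the supremum $\sup_k u^k/k!$ is nearly attained at two consecutive indices, so deleting sparse indices changes $T$ only by a bounded factor; hence $T_M\asymp T_{M^c}$ and (2), (3) hold, while $M_{k_j}/M_{k_j-1}=k_j\,e^{k_j^2}$ violates $M_{k+1}\le c\,h^kM_k$ for every choice of $c,h$. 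The lemma is therefore only correct if one restricts to log-convex $M$, or reads (M2') on $M^c$ --- precisely Komatsu's hypothesis (M.1), and harmless for the paper since $C_M(\mA)=C_{M^c}(\mA)$; but this must be \emph{assumed}, not derived from invariance of $T_M$. Note finally that the one direction the paper actually uses, (M2')$\Rightarrow$(2) (hence (3)), does survive for arbitrary $M$ without any regularization, by a direct argument avoiding the slope picture entirely: choose $k$ with $u^k/M_k\ge(1-\epsilon)T_M(u)$; then $T_M(hu)\ge(hu)^{k+1}/M_{k+1}\ge(h/c)\,u\,(u^k/M_k)\ge(1-\epsilon)(h/c)\,u\,T_M(u)$.
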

\begin{ex}
  The defining sequence for the Gevrey-class $\mJ_r$, $r>0$ satisfies (M2').
\end{ex}
\begin{prop}
  \label{proposition:carlemanisweighted.} If \mA\ is a \BA\ with periodic automorphism group and
if the defining sequence satisfies (M2')
\[
 C_M(\mA) =\bigcup_{r>0} \mC^1_{T_{r,M}}(\mA) = \bigcup_{r>0} \mC^\infty_{T_{r,M}}(\mA)= \bigcup_{r>0} \app \infty  {T_{r,M}} \mA \,
\]
with the interpretation that these algebras are topologically isomorphic.
\end{prop}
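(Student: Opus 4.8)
The plan is to close a cycle of inclusions around the four unions, using Corollary~\ref{proposition:weightspaceinclusion} for two of them and condition (M2') for the decisive third. By that corollary we already have $\bigcup_{r>0}\mC^1_{T_{r,M}}(\mA)\inject C_M(\mA)\inject\bigcup_{r>0}\mC^\infty_{T_{r,M}}(\mA)$, so the three spaces $C_M(\mA)$, $\bigcup_{r>0}\mC^1_{T_{r,M}}(\mA)$ and $\bigcup_{r>0}\mC^\infty_{T_{r,M}}(\mA)$ will coincide as sets as soon as I prove the reverse inclusion $\bigcup_{r>0}\mC^\infty_{T_{r,M}}(\mA)\subseteq\bigcup_{r>0}\mC^1_{T_{r,M}}(\mA)$. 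This is the one place where (M2') is needed, and I would use it in the form of part~(3) of Lemma~\ref{proposition:m2weights}.

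For the main step, take $a\in\mC^\infty_{T_{r,M}}(\mA)$, so that $\norm{\hat a(k)}_\mA\le C\,T_M(2\pi\abs{k}_\infty/r)^{-1}$ for all $k\in\bzd$. I would estimate the weighted $\ell^1$-norm at a larger parameter $r'>r$ by writing $\norm{a}_{\mC^1_{T_{r',M}}(\mA)}\le C\sum_{k\in\bzd}T_M(2\pi\abs{k}_\infty/r')\,T_M(2\pi\abs{k}_\infty/r)^{-1}$ and bounding the ratio pointwise. Applying part~(3) of Lemma~\ref{proposition:m2weights} with $\rho=2\pi\abs{k}_\infty/r'$ and $\lambda=r'/r>1$ gives $T_M(2\pi\abs{k}_\infty/r)\,T_M(2\pi\abs{k}_\infty/r')^{-1}\ge(2\pi\abs{k}_\infty/(r'c))^{\beta}$ with $\beta=\log(r'/r)/\log h$, so that for large $\abs{k}_\infty$ the ratio is dominated by $C\abs{k}_\infty^{-\beta}$. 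Since the number of $k\in\bzd$ with $\abs{k}_\infty=m$ grows only like $m^{d-1}$, the series converges once $\beta>d$, which I can guarantee by choosing any $r'$ with $r'/r>h^d$. Hence $a\in\mC^1_{T_{r',M}}(\mA)$, the reverse inclusion holds, and $C_M(\mA)=\bigcup_{r>0}\mC^1_{T_{r,M}}(\mA)=\bigcup_{r>0}\mC^\infty_{T_{r,M}}(\mA)$.

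It remains to insert the approximation spaces, which I would do by the sandwich $\bigcup_{r>0}\mC^1_{T_{r,M}}(\mA)\subseteq\bigcup_{r>0}\app\infty{T_{r,M}}\mA\subseteq\bigcup_{r>0}\mC^\infty_{T_{r,M}}(\mA)$ at a fixed parameter $r$. For the left inclusion, given $a\in\mC^1_{T_{r,M}}(\mA)$ the truncated Fourier sum $s_n(a)=\sum_{\abs{k}_\infty\le n}\hat a(k)$ lies in $X_n$, since each $\hat a(k)$ is $\abs{k}_\infty$-bandlimited; hence $E_n(a)\le\sum_{\abs{k}_\infty>n}\norm{\hat a(k)}_\mA$, and multiplying by $T_M(2\pi n/r)$ and using that $T_M$ is increasing to move the factor inside the tail sum gives $E_n(a)\,T_M(2\pi n/r)\le\norm{a}_{\mC^1_{T_{r,M}}(\mA)}$, i.e. $a\in\app\infty{T_{r,M}}\mA$. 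For the right inclusion I would use the estimate $\norm{\hat a(k)}_\mA\le C\,E_{\abs{k}_\infty}(a)$ from \eqref{eq:fourierapp}, which immediately yields $\norm{\hat a(k)}_\mA\,T_M(2\pi\abs{k}_\infty/r)\le C\sup_n E_n(a)\,T_M(2\pi n/r)<\infty$, i.e. $a\in\mC^\infty_{T_{r,M}}(\mA)$. Combined with the equality of the $\mC^1$- and $\mC^\infty$-unions from the previous step, this forces all four unions to agree as sets.

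Finally, the topological statement follows because every inclusion above is realised by a continuous embedding with an explicit shift of the parameter $r$ and a controlled norm, so the identity is bicontinuous between the inductive limits. The one genuine obstacle is the main step: the whole argument hinges on turning an $\ell^\infty$ weight bound into a summable $\ell^1$ bound, and this is possible precisely because (M2') forces $T_M$ to grow faster than every power, so that rescaling $r$ produces honest polynomial decay of the weight ratio. Without (M2') the outer inclusions of Corollary~\ref{proposition:weightspaceinclusion} may well be strict.
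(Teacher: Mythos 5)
Your proof is correct, and its decisive step is the same as the paper's: the passage from the $\sup$-weighted bound to the $\ell^1$-weighted bound via condition~(\ref{drei}) of Lemma~\ref{proposition:m2weights}, with the weight ratio decaying like $\abs k_\infty^{-\log(r'/r)/\log h}$ and the choice $r'>h^d r$ guaranteeing summability over \bzd\ --- this is precisely step (2) of the paper's proof (your formulation $\mC^\infty_{T_{r,M}}(\mA)\inject\mC^1_{T_{r',M}}(\mA)$ is in fact the correctly labelled version of what the paper's displayed estimate proves, with $\lambda=r'/r$). Where you genuinely differ is the approximation-space leg. The paper proves $\mC^\infty_{T_{r,M}}(\mA)\inject\app \infty {T_{\kappa r,M}} \mA$ directly in its step (4): it bounds $E_l(a)$ by the tail sum $\sum_{\abs k_\infty\ge l}\norm{\hat a(k)}_\mA$, compares that sum with an integral, and uses (M2') twice more --- condition~(\ref{drei}) to control the integrand and condition~(\ref{zwei}), applied $d$ times, to absorb the resulting factor $l^d$ into $T_M(l/h^d)$. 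You bypass all of this with the elementary same-parameter inclusion $\mC^1_{T_{r,M}}(\mA)\inject\app \infty {T_{r,M}} \mA$: since $s_n(a)=\sum_{\abs k_\infty\le n}\hat a(k)$ is $n$-bandlimited and $T_M$ is increasing, one gets $E_n(a)\,T_M(2\pi n/r)\le\sum_{\abs k_\infty>n}\norm{\hat a(k)}_\mA\,T_M(2\pi\abs k_\infty/r)\le\norm{a}_{\mC^1_{T_{r,M}}(\mA)}$, and together with \eqref{eq:fourierapp} (the paper's step (3)) and Corollary~\ref{proposition:weightspaceinclusion} this closes the cycle of inclusions. Your arrangement invokes (M2') only once and needs no integral comparison, so it is shorter and cleaner; what the paper's step (4) buys in exchange is the direct quantitative embedding of $\mC^\infty_{T_{r,M}}(\mA)$ into an approximation space with the explicit shift $\kappa=h^d$, whereas your route reaches the approximation spaces only after detouring through the $\ell^1$ spaces (total shift $>h^d$). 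Both arguments yield the topological identification of the inductive limits by the same stepwise-continuity reasoning.
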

\begin{proof}{(see, e.g.,\cite{langenbruch89})}
We split the proof into several parts. By known properties of inductive limits \cite{floret68} it is sufficient to prove the following inclusions.
\item [(1)] $ \mC^1_{T_{r,M}}(\mA) \inject \mC^\infty_{T_{r,M}}(\mA)  $ : This follows from Lemma \ref{proposition:weightinclusion}.
\item [(2)] $\mC^\infty_{T_{\lambda r,M}}(\mA) \inject \mC^1_{T_{r,M}}(\mA)$ for some $\lambda >0$ :
 Using Lemma~\ref{proposition:m2weights}, (\ref{drei}), we obtain the estimate 
  \begin{align*}
  \sum_{k \in \bzd} \norm{\hat a (k)}_\mA T_{r,M}(\abs k_\infty) 
  &\leq \sum_{k \in \bzd} \norm{\hat a (k)}_\mA T_{r,M}(\lambda\abs{ k}_\infty) \exp \bigl(-\log(\frac{\abs {k}_\infty}{c}) \frac{\log \lambda}{ \log h} \bigr)\\
  &\leq \sup_{k \in \bzd} \norm{\hat a (k)}_\mA T_{r,M}(\lambda\abs{ k}_\infty) \sum_{k \in \bzd} \Bigl(\frac{\abs {k}_\infty}{c}\Bigr)^{- {\log \lambda}/{ \log h}} \,.
\end{align*}
If we choose $\lambda$ such that $\log \lambda/{ \log h} > d$, the sum on the right hand side of the inequality is convergent.
\item [(3)] $\app \infty  {T_{r,M}} \mA  \inject  \mC^\infty_{T_{ r,M}}(\mA)$ follows from  \eqref{eq:fourierapp}.
\item [(4)]  $  \mC^\infty_{T_{r,M}}(\mA) \inject \app \infty  {T_{ \kappa r,M}} \mA$ for some $\kappa >0$ will be verified \wwlog\ for $r=2 \pi$. 
The approximation error of $a \in \mC^\infty_{T_{2 \pi,M}}(\mA)$ can be estimated by
$
E_l(a) \leq \sum_{\abs k_\infty \geq l}\norm{\hat a (k)}_\mA \leq \norm{a}_{\mC^\infty_{T_{2 \pi,M}}} \sum_{\abs k_\infty \geq l} T_{2 \pi,M}^{-1}(\abs k) \,.
$
As $T_{2 \pi,M}(u)=T_M(u)$ is increasing, we can replace the sum by an integral. We assume that  $l$ is so large that $\frac{\log(l/c) }{ \log h} > 2d$, and obtain 
\begin{align*}
  \sum_{\abs k_\infty \geq l} T_M^{-1}(\abs k) 
  &\leq \int_{\abs k_\infty \geq l} T_M^{-1}(\abs k) \, dk
  \leq C' \int_l^\infty \frac{1}{T_M(u)}u^{d-1} \, du \\
  &= C' l^d \int_1^\infty \frac{1}{T_M(l v )}v^{d-1} \, dv 
   \leq C' \frac{l^d}{T_M(l)} \int_1^\infty v^{d-1} e^{-\frac{\log(l/c) \log v} { \log h}} dv \\
  &= C' \frac{l^d}{T_M(l)} \int_1^\infty v^{ d-1-\frac{\log(l/c)}{ \log h}} dv
  =C' \frac{l^d}{T_M(l)} \frac{1}{\frac{\log(l/c) }{ \log h}-d}  \leq C' \frac{l^d}{T_M(l)} \inv{d}\,,
\end{align*} 
where we have used(\ref{drei}) of Lemma~\ref{proposition:m2weights} in the second line.
Applying (\ref{zwei}) of Lemma~\ref{proposition:m2weights} $d$ times we obtain $T_M(l) \geq C l^d T(l/h^d)$ with a constant $C$ independent of $l$. Substituting this in the current estimate we obtain
$
E_l(a) \leq C\norm{a}_{\mC^\infty_{T_{2 \pi,M}}} T_M^{-1}(l/h^d) \,,
$
and the constant $C$ is independent of $l$. So $\norm{a}_{\mE^\infty_{T_{2 \pi h^d,M}}} \leq C\norm{a}_{\mC^\infty_{T_{2 \pi,M}}}$, and that is what we wanted to show.
\end{proof}
For a more general discussion of approximation results see~\cite{petzsche84}.

\section{Dales-Davie Algebras}
\label{sec:dales-davie-classes}
In this section we assume that  $\Psi$ is a \emph{one} parameter automorphism group acting on the \BA\ \mA. \\

We define {\BA s} that are determined by growth conditions on the sequence $(\norm{\delta^k(a)}_\mA)_{k \in \bn_0}$ by adapting a similar construction introduced in~\cite{dales73} for scalar functions in the complex plane .
\begin{defn}
Let $M =(M_k)_{k\geq0}$  be an \emph{algebra sequence}, that is, a sequence of positive numbers with $M_0=1$ and 
$ 
\frac{M_{k+l}}{(k+l)!} \geq  \frac{M_k}{k!} \frac{M_l}{l!}$ for all  $k,l \in \bn_0$.
The \emph{Dales-Davie algebra} $\dadas 1 M \mA$ consists of the elements $a \in \mA$ with finite norm
\[
\norm{a}_{\dadas 1 M \mA} 
=\sum_{k=0}^\infty \inv{M_k}{\norm{\delta^k(a)}_\mA} \,.
\]
\end{defn}
The space $\dadas 1 M \mA$ is indeed a \BA. This will be proved in Proposition~\ref{proposition-dada}.

\begin{ex}
  Recall that the norm of a derivation on $\mC^1_{v_0}(\mA)$ (see
  Definition \ref{defn-weighted}) is $ \norm{\delta^k(a)}_{
    \mC^1_{v_0}(\mA)}= \sum_{l \in \bz} \norm{\hat a (l)}_\mA (2 \pi
  \abs l)^k \,.  $ For the norm on $\dada {\mC^1_{v_0}(\mA)}$ we
  obtain
  \[
  \norm{a}_{\dada \mA}= \sum_{k=0}^\infty \inv M_k\sum_{l \in \bz}
  \norm{\hat a(l)}_\mA (2 \pi\abs l) ^k = \sum_{l \in \bz} \norm{\hat
    a (l)}_\mA \sum_{k=0}^\infty \frac{(2 \pi\abs l) ^k}{M_k}
  \,.
  \]
  Let us define the weight $v_M$ associated to $M$ by
  \begin{equation}\label{eq:assweight}
    v_M(l)=\sum_{k=0}^\infty \frac{ (2 \pi\abs l) ^k}{M_k} \,.
  \end{equation}
  Thus we obtain $ \dada {\mC^1_{v_0}(\mA)} = \mC^1_{v_M}(\mA) \,.  $
For this example we have established a relation between the growth of derivatives and weights. 
\end{ex}
We recall some notions from complex analysis (see, e.g. \cite{Levin96}). For an entire function $f$ let $M_f(r)= \sup_{\abs x \leq r}\abs{f(x)}$.
The \emph{order} of  $f$ is 
$
\rho_f= \varlimsup_{r \to \infty} {\log \log M_f(r)}/\log r$. 
If $f$ has finite order $\rho_f$, the \emph{type} of $f$ is
$
\sigma_f =\varlimsup_{r \to \infty} {\log M_f(r)}{r^{-\rho_f}} \,.
$
If $\sigma_f=0$, we say that $f$ has minimal type.

In the following lemma  some basic properties of $v_M$ are collected.
\begin{lem}\label{prop:assweights} \hspace{1cm}
  \begin{enumerate}
  \item \label{asw1} If $M$ is an algebra sequence, then $v_M(\abs k)$ is \smult.
  \item \label{asw2} The  weight $v_M$  can be extended from the positive semiaxis to an entire function if and only if $\lim_{k \to \infty}M_k^{1/k}=\infty$.
  \item \label{asw3} The  weight $v_M$  satisfies the GRS condition if and only if 
$
\lim_{k \to \infty}({M_k}/{k!})^{1/k}= \infty 
$.  
Furthermore, $v_M$ is GRS if and only if the analytic continuation of $v_M$ is an entire function of order $\rho_{v_M} \leq 1$, and, if $\rho_{v_M}=1$, then $v_M$ is of minimal type. 
  \end{enumerate}
\end{lem}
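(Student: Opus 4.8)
The plan is to handle the three assertions separately: parts~\ref{asw1} and~\ref{asw2} are elementary consequences of the power–series definition \eqref{eq:assweight}, while part~\ref{asw3} rests on the classical dictionary between Taylor coefficients and the order/type of an entire function.

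For part~\ref{asw1}, I would first observe that iterating the defining inequality of an algebra sequence gives $M_k \ge M_1^k\,k!$, so $\lim_k M_k^{1/k}=\infty$ and $v_M$ is finite (entire) by part~\ref{asw2}. Since $v_M$ is a power series with nonnegative coefficients, it is nondecreasing on $[0,\infty)$ and $v_M(0)=1$; hence $v_M(\abs{k+l}) \le v_M(\abs k+\abs l)$, and it suffices to prove $v_M(a+b)\le v_M(a)v_M(b)$ for reals $a,b\ge 0$. Writing $u=2\pi a$, $v=2\pi b$ and expanding $(u+v)^n$ by the binomial theorem, the inequality reduces to the termwise bound $\frac{n!}{M_n}\frac{u^k}{k!}\frac{v^l}{l!}\le \frac{u^k}{M_k}\frac{v^l}{M_l}$ for $k+l=n$, which is precisely the algebra–sequence condition $\frac{M_n}{n!}\ge \frac{M_k}{k!}\frac{M_l}{l!}$ written reciprocally. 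Summing over $k+l=n$ and then over $n$ yields the claim; this is the only point where the algebra–sequence hypothesis enters.

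For part~\ref{asw2}, the candidate continuation is forced: any entire extension agrees with $v_M$ on the positive semiaxis, so by uniqueness of Taylor coefficients its Maclaurin series must be $\sum_k \frac{(2\pi)^k}{M_k}z^k$. By Cauchy--Hadamard this series has radius of convergence $R=\liminf_k M_k^{1/k}/(2\pi)$, hence is entire exactly when $\liminf_k M_k^{1/k}=\infty$, i.e. $\lim_k M_k^{1/k}=\infty$; conversely that condition makes the series entire and restricts to $v_M$ on $[0,\infty)$. I would note in passing that when $R<\infty$ Pringsheim's theorem places a singularity at $z=R$, so no entire extension can exist.

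Part~\ref{asw3} is the substantial one, and I would run the chain of equivalences through the exponential growth rate $\bar\tau:=\limsup_{r\to\infty}\frac{\log v_M(r)}{r}$. Because $v_M$ is nondecreasing, the relation $v_M(nm)^{1/n}=\exp\bigl(m\,\tfrac{\log v_M(nm)}{nm}\bigr)$ together with a monotonicity argument transferring the limit superior over integers to one over the reals shows that the GRS condition $\lim_n v_M(n\abs x)^{1/n}=1$ for all $x$ is equivalent to $\bar\tau=0$. If $\lim_k M_k^{1/k}\neq\infty$ then $v_M$ is eventually $+\infty$, GRS fails, and $(M_k/k!)^{1/k}$ has a subsequence tending to $0$, so both sides fail; thus by part~\ref{asw2} I may assume $v_M$ entire. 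Since the coefficients are nonnegative, $M_{v_M}(r)=v_M(r)$, so $\bar\tau$ is exactly the type of $v_M$ with respect to order $1$, and $\bar\tau=0$ holds precisely when $v_M$ has order $\rho_{v_M}\le 1$ and, when $\rho_{v_M}=1$, minimal type --- this is the ``furthermore'' clause. Finally I translate $\bar\tau=0$ into a coefficient condition via the classical formula \cite{Levin96}, with $a_k=(2\pi)^k/M_k$, namely $\bar\tau=\frac1e\limsup_k k\,\abs{a_k}^{1/k}=\frac{2\pi}{e}\limsup_k \frac{k}{M_k^{1/k}}$; hence $\bar\tau=0\iff \lim_k M_k^{1/k}/k=\infty$, and Stirling's estimate $(k!)^{1/k}\sim k/e$ converts this into $\lim_k (M_k/k!)^{1/k}=\infty$, as claimed.

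I expect the main obstacle to be the bookkeeping in part~\ref{asw3}: first the reduction of the GRS condition (a family of integer–multiple limits) to the single growth invariant $\bar\tau=0$, and then the use of the order/type--coefficient dictionary in the borderline regime $\rho_{v_M}=1$, where ``minimal type'' must be matched exactly with $\limsup_k k\,\abs{a_k}^{1/k}=0$. Parts~\ref{asw1} and~\ref{asw2} are routine once the power–series viewpoint is adopted.
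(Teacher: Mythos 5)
Your proof is correct and takes essentially the same route as the paper: the binomial expansion combined with the algebra-sequence inequality $\frac{M_{k+l}}{(k+l)!}\geq\frac{M_k}{k!}\frac{M_l}{l!}$ for part (1), Cauchy--Hadamard for part (2), and, for part (3), the reduction of the GRS condition to subexponential growth of $v_M$ together with Levin's coefficient formulas for order and type. Your only (harmless) deviation is routing part (3) through the single invariant $\bar\tau=\varlimsup_{r\to\infty}\log v_M(r)/r$ with the type formula applied at order $1$ uniformly---where the paper instead splits into the cases $\rho_{v_M}<1$ (handled by the order formula) and $\rho_{v_M}=1$ (handled by the type formula)---and your use of the order-$1$ type formula for functions of order $<1$ is a standard extension of Levin's Theorem~1.2 that follows from the same Cauchy estimates, since both sides vanish when $\rho_{v_M}<1$.
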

\begin{proof}
  (\ref{asw1}) Let $r,s \geq 0$. Then 
  \begin{align*}
    v_M(r+s)
    =\sum_{k=0}^\infty \frac{(2 \pi)^k}{M_k} \sum_{l=0}^k\frac{k!}{l!(k-l)!} r^l s^{k-l}
     \leq \sum_{k=0}^\infty  \sum_{l=0}^k\frac{ (2 \pi r)^l}{M_l} \frac{(2 \pi s)^{k-l}}{M_{k-l}} 
    \leq v_M(r) v_M(s) \,.
  \end{align*}
As $v_M$ is increasing on $\br_0^+$, $v_M(\abs{r+s}) \leq  v_M(\abs r + \abs s)$, and this proves (\ref{asw1}) for all values of $r,s  \geq 0$.

(\ref{asw2}) is a consequence of the Cauchy-Hadamard formula for the convergence radius  of the power series $v_M$.

(\ref{asw3}) We use the following formulas for order and type of the entire function $f(x)=\sum_{k=0}^\infty a_kx^k$ \cite[Theorem 1.2]{Levin96}.
\begin{align}
  \rho_f &=\varlimsup_{k \to \infty} \frac{k \log k}{\log (1/{\abs{a_k})}} ,\label{eq:order}\\
  \sigma_f &= \frac{1}{\rho_f \,e} \varlimsup_{k \to \infty} k \abs {a_k}^{\rho_f/k}  \label{eq:type}\,.
\end{align}
If $v_M$ satisfies the GRS condition then for all $\epsilon >0$ there is a $r(\epsilon)$ such that
$ 
  \label{eq:grsexplicit}
   1 \leq v_M(r) \leq (1+\epsilon)^r=\exp(r\log(1+\epsilon))
$ 
for all $r > r(\epsilon)$. This implies that $\rho_{v_M} \leq 1$, and if $\rho_{v_M} = 1$, then $v_M$  is of minimal type. 
 To verify the last assertion assume first that $\rho_{v_m}<1$ and choose $\epsilon>0$ so small that $\rho_{v_m}+\epsilon <1$. Set $N_k= M_k/(2 \pi)^k$.
Then \eqref{eq:order} implies that 
$
k \log k \leq (\rho_{v_M}+\epsilon) \log{N_k}
$
for $k > k(\epsilon)$, and so
$
k^k < N_k^{\rho_{v_M}+\epsilon} 
$. It follows that
\[
N_k > (k^k)^{\frac{1}{\rho_{v_m}+\epsilon}}= k^{k(1+\delta)}
\]
for some $\delta>0$, and therefore
\begin{equation}\label{eq:mktoinf}
  k^{-1} {N_k^{1/k}} > k^\delta \to \infty
\end{equation}
for $k \to \infty$.
If $\rho_{v_M}=1$, then $v_M$ is of minimal type, and \eqref{eq:type} implies that
\begin{equation}\label{eq:mintype}
  0 =\lim_{k \to \infty} {k}{N_k^{-1/k}} \,,
\end{equation}
and that is what we wanted to show.

If we assume that $\lim_{k \to \infty}(M_k/k!)^{1/k}=\infty$, then the relations~\eqref{eq:mktoinf} and \eqref{eq:mintype} together with~\eqref{eq:order} and \eqref{eq:type} imply that $v_m$ is of order $\leq 1$. If $v_M$ is of order one, the same relations imply that it is of minimal type. This means that for all $\epsilon>0$ there is some $r(\epsilon)$ such that $v_M(r) \leq (1+\epsilon)^r$ for all $r > r(\epsilon)$, so $v_M$ is a GRS weight.
\end{proof}
\begin{prop}\label{proposition-dada}
  If \mA\ is a \BA\ with a  one-parameter group of automorphisms acting on \mA, and $M$ is an algebra sequence, then $\dada \mA$ is a \BA.
\end{prop}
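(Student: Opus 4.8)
The plan is to verify the two defining properties of a Banach algebra separately: that the norm $\norm{\cdot}_{\dada\mA}$ controls products (so $\dada\mA$ is closed under multiplication and is a normed algebra), and that $\dada\mA$ is complete. The unit requires no work, since applying the Leibniz rule \eqref{eq:derivation} to $\one=\one\cdot\one$ gives $\delta(\one)=0$, hence $\delta^k(\one)=0$ for $k\geq 1$ and $\one\in\dada\mA$ with $\norm{\one}_{\dada\mA}=\norm{\one}_\mA$.

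For the algebra property I would first record the higher-order Leibniz rule
\[
\delta^k(ab)=\sum_{j=0}^k\binom{k}{j}\delta^j(a)\,\delta^{k-j}(b),
\]
which follows by induction from \eqref{eq:derivation}. Taking $\mA$-norms and using submultiplicativity of $\mA$ (with a constant $C$) gives $\norm{\delta^k(ab)}_\mA\leq C\sum_{j=0}^k\binom{k}{j}\norm{\delta^j(a)}_\mA\norm{\delta^{k-j}(b)}_\mA$. The crux is the algebra-sequence hypothesis: writing $\binom{k}{j}=\frac{k!}{j!(k-j)!}$ and invoking $\frac{M_k}{k!}\geq\frac{M_j}{j!}\frac{M_{k-j}}{(k-j)!}$ with $k=j+(k-j)$ yields the clean inequality
\[
\frac{1}{M_k}\binom{k}{j}\leq\frac{1}{M_j}\,\frac{1}{M_{k-j}}.
\]
Substituting this into $\norm{ab}_{\dada\mA}=\sum_{k}\frac{1}{M_k}\norm{\delta^k(ab)}_\mA$ converts the double sum into a Cauchy product of non-negative terms, and an application of Tonelli's theorem gives $\norm{ab}_{\dada\mA}\leq C\norm{a}_{\dada\mA}\norm{b}_{\dada\mA}$; by the criterion recalled in Section~\ref{sec:inverse-clos-matr} this suffices. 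I expect this computation to be the heart of the matter: the definition of an algebra sequence is engineered precisely so that the factorial factors produced by the Leibniz rule cancel against the weights $1/M_k$, and without it the product estimate would fail.

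For completeness, let $(a_n)$ be Cauchy in $\dada\mA$. For each fixed $k$ the estimate $\norm{\delta^k(a_n)-\delta^k(a_m)}_\mA\leq M_k\norm{a_n-a_m}_{\dada\mA}$ shows that $(\delta^k(a_n))_n$ is Cauchy in the complete space $\mA$, hence converges to some $b_k\in\mA$; in particular $a_n\to a:=b_0$ in $\mA$. Since each $a_n$ lies in $\mD(\delta^k)$ and $\delta^k$ is closed (as noted in Section~\ref{sec:derivations}, closedness of $\delta$ propagates to $\delta^k$), we conclude $a\in\mD(\delta^k)$ and $\delta^k(a)=b_k$ for every $k$. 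It then remains to upgrade $\mA$-convergence to $\dada\mA$-convergence: given $\epsilon>0$, choose $N_0$ with $\norm{a_n-a_m}_{\dada\mA}<\epsilon$ for $n,m\geq N_0$, so that for every finite $K$ one has $\sum_{k=0}^K\frac{1}{M_k}\norm{\delta^k(a_n)-\delta^k(a_m)}_\mA<\epsilon$; letting $m\to\infty$ (using $\delta^k(a_m)\to\delta^k(a)$ in $\mA$) gives $\sum_{k=0}^K\frac{1}{M_k}\norm{\delta^k(a_n)-\delta^k(a)}_\mA\leq\epsilon$, and since $K$ is arbitrary, $\norm{a_n-a}_{\dada\mA}\leq\epsilon$ for $n\geq N_0$. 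This simultaneously shows $a\in\dada\mA$ and $a_n\to a$. The only delicate point here is the final interchange of limits, which is handled safely by estimating finite partial sums before passing to $K\to\infty$.
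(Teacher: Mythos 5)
Your proof is correct and follows essentially the same route as the paper: your binomial estimate $\frac{1}{M_k}\binom{k}{j}\leq\frac{1}{M_j}\frac{1}{M_{k-j}}$ is exactly the computation underlying the paper's appeal to Lemma~\ref{prop:assweights}(\ref{asw1}) (submultiplicativity of $v_M$), transplanted into the higher-order Leibniz expansion of $\delta^k(ab)$. Your completeness argument likewise matches the paper's sketch (Cauchyness of each $\delta^k(a_n)$ plus closedness of $\delta^k$), merely spelling out the ``standard arguments'' via finite partial sums before letting $K\to\infty$.
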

\begin{proof}
  The algebra property follows from using Lemma~\ref{prop:assweights}(\ref{asw1}). To prove completeness let $a_n$ be a Cauchy sequence in $\dada \mA$. This implies that $\delta^k a_n$ is a Cauchy sequence in \mA\ for all indices $k$. As $\delta$ is a closed operator and \mA\ is complete it follows that there is an  $a \in \mA$ such that for all $k\geq0$ the sequence $\delta^ka_n$ converges to $\delta^ka$ in \mA. By standard arguments this implies that $a_n \to a$ in $\dada \mA$.
\end{proof}
\begin{prop}
  If $\mA$ is a \BA\ with  a one-parameter group of automorphisms $\Psi$ acting on \mA, and $M$ is an algebra sequence, then all elements of $\dada \mA$ are \cont, 
$
C(\dada \mA) = \dada \mA
$.
\end{prop}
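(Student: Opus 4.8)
The plan is to show first that $\Psi$ acts boundedly on $\dada \mA$, and then to verify strong continuity of this action by a dominated-convergence argument carried out term-by-term in the defining series. Recall that the generator commutes with the group, $\psi_t \delta = \delta \psi_t$ on $\mD(\delta)$, and hence $\delta^k(\psi_t(a)) = \psi_t(\delta^k(a))$ for every $k$ (by induction on $k$). Since $\sup_t \norm{\psi_t}_{\mA \to \mA} = M_\Psi < \infty$, this immediately gives
\[
\norm{\psi_t(a)}_{\dada \mA} = \sum_{k=0}^\infty \inv{M_k}\norm{\psi_t(\delta^k(a))}_\mA \leq M_\Psi \, \norm{a}_{\dada \mA} \,,
\]
so each $\psi_t$ maps $\dada \mA$ into itself with uniformly bounded norm. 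Continuity of the action on $\dada \mA$ then amounts to showing $\lim_{t \to 0}\norm{\psi_t(a) - a}_{\dada \mA} = 0$ for every $a \in \dada \mA$.

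The key observation is that finiteness of $\norm{a}_{\dada \mA}$ forces $\delta^k(a) \in \mA$ for all $k$; in particular $\delta^k(a) \in \mD(\delta)$, since $\delta^{k+1}(a)$ exists. An element of $\mD(\delta)$ is automatically a continuous vector: the very existence of the limit defining $\delta(\delta^k(a))$ yields $\psi_t(\delta^k(a)) \to \delta^k(a)$ in \mA\ as $t \to 0$, so $\delta^k(a) \in C(\mA)$. Consequently each summand $\inv{M_k}\norm{\psi_t(\delta^k(a)) - \delta^k(a)}_\mA$ tends to $0$ as $t \to 0$.

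Finally I would interchange the limit with the infinite sum in
\[
\norm{\psi_t(a) - a}_{\dada \mA} = \sum_{k=0}^\infty \inv{M_k}\norm{\psi_t(\delta^k(a)) - \delta^k(a)}_\mA \,.
\]
Each term is dominated, uniformly in $t$, by $(M_\Psi + 1)\inv{M_k}\norm{\delta^k(a)}_\mA$, whose sum equals $(M_\Psi + 1)\norm{a}_{\dada \mA} < \infty$; dominated convergence for series then gives $\norm{\psi_t(a) - a}_{\dada \mA} \to 0$, which is the assertion. The one genuinely quantitative point — and the only place where the structure of the $\dada \mA$-norm enters — is this interchange: the summable majorant, read off directly from the definition of the norm, is what upgrades the obvious termwise continuity into continuity in the stronger norm. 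Everything else (commutation with the generator, the uniform bound $M_\Psi$, and domain elements being continuous vectors) is standard one-parameter-group bookkeeping, so I expect no serious obstacle beyond organizing these ingredients.
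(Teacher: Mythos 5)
Your proof is correct and is essentially the paper's argument: the ``dominated convergence for series'' step, with majorant $(M_\Psi+1)\inv{M_k}\norm{\delta^k(a)}_\mA$, is exactly the paper's splitting of the norm series into a finite part (small for small $t$ by continuity of each $\delta^k(a)$) and a tail bounded by $(M_\Psi+1)\sum_{k>M}\inv{M_k}\norm{\delta^k(a)}_\mA$. The only cosmetic difference is that you derive $\delta^k(a)\in C(\mA)$ directly from $\delta^k(a)\in\mD(\delta)$ (the existence of the difference-quotient limit forcing $\psi_t(\delta^k(a))\to\delta^k(a)$), where the paper simply cites \cite[Prop.~3.15]{grkl10}.
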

\begin{proof} For $a \in \dada \mA$,
\[ 
\norm{\psi_t(a)-a}_{\dada \mA} \leq \sum_{k=0}^M \frac{\norm{\psi_t(\delta^k(a))-\delta^k(a)}_{ \mA}}{M_k} + (M_\Psi+1)\sum_{k=M+1}^\infty\frac{\norm{\delta^k(a)}_{ \mA}}{M_k} \,.
\]
For $\epsilon>0$ given we can choose $M$ such that the second sum in the expansion above is smaller than $\epsilon$. As $\delta^k(a) \in C(\mA)$ ~\cite[Proposition.3.15]{grkl10} for all $k \in \bn_0$, the first sum can be made small by choosing $t$ small enough.
\end{proof}
\begin{prop}\label{cor-bl-dense}
The \BL\ elements of \mA\ are dense in $\dada  \mA$.
\end{prop}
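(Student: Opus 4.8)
The plan is to approximate $a \in \dada \mA$ by smoothing it along the group with a scalar mollifier whose Fourier transform is compactly supported. Fix a function $\phi \in L^1(\br)$ whose Fourier transform $\hat\phi$ (with the convention $\hat\phi(\xi)=\int_\br \phi(t)e^{-2\pi i \xi t}\,dt$) satisfies $\hat\phi \in C_c^\infty(\br)$, $\supp \hat\phi \subseteq [-1,1]$, and $\int_\br \phi = \hat\phi(0)=1$. For $\sigma>0$ set $\phi_\sigma(t)=\sigma\phi(\sigma t)$, so that $\widehat{\phi_\sigma}(\xi)=\hat\phi(\xi/\sigma)$ is supported in $[-\sigma,\sigma]$, $\norm{\phi_\sigma}_{L^1}=\norm{\phi}_{L^1}$, and $(\phi_\sigma)$ forms an approximate identity as $\sigma\to\infty$. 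Define the smoothed element $a_\sigma=\int_\br \phi_\sigma(t)\,\psi_t(a)\,dt$ as a Bochner integral in $\mA$. The claim is that each $a_\sigma$ is bandlimited and that $a_\sigma \to a$ in $\dada \mA$.

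To see that $a_\sigma$ is $\sigma$-bandlimited, I exploit that powers of the generator act as time derivatives along the orbit: since $\delta$ commutes with the group, $\frac{d^k}{dt^k}\psi_t(a)=\delta^k(\psi_t(a))=\psi_t(\delta^k a)$ for every $a \in \dada \mA \subseteq \bigcap_k \mD(\delta^k)$ (the inclusion holds because $\norm{a}_{\dada \mA}<\infty$ forces every $\norm{\delta^k a}_\mA$ finite). As $\delta$ is closed and the integrals converge absolutely, $\delta^k$ commutes with the Bochner integral, and integrating by parts $k$ times transfers the derivatives onto the kernel:
\[
\delta^k(a_\sigma)=\int_\br \phi_\sigma(t)\,\frac{d^k}{dt^k}\psi_t(a)\,dt = (-1)^k\int_\br \phi_\sigma^{(k)}(t)\,\psi_t(a)\,dt .
\]
Taking norms and using $\norm{\psi_t}_{\mA\to\mA}\le M_\Psi$ gives $\norm{\delta^k(a_\sigma)}_\mA \le M_\Psi\norm{a}_\mA\norm{\phi_\sigma^{(k)}}_{L^1}$. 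Since $\widehat{\phi_\sigma}$ is supported in $[-\sigma,\sigma]$, the classical Bernstein inequality yields $\norm{\phi_\sigma^{(k)}}_{L^1}\le (2\pi\sigma)^k\norm{\phi_\sigma}_{L^1}=(2\pi\sigma)^k\norm{\phi}_{L^1}$. Hence $\norm{\delta^k(a_\sigma)}_\mA \le C(2\pi\sigma)^k$ with $C=M_\Psi\norm{\phi}_{L^1}\norm{a}_\mA$ independent of $k$, i.e.\ $a_\sigma$ satisfies the Bernstein inequality~\eqref{eq:bernstein} and is $\sigma$-bandlimited.

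For the convergence I estimate each layer separately. Using $\int_\br \phi_\sigma=1$ and $\delta^k(\psi_t a)=\psi_t(\delta^k a)$,
\[
\delta^k(a_\sigma)-\delta^k(a)=\int_\br \phi_\sigma(t)\bigl(\psi_t(\delta^k a)-\delta^k a\bigr)\,dt ,
\]
so $\norm{\delta^k(a_\sigma)-\delta^k(a)}_\mA \le \int_\br \abs{\phi_\sigma(t)}\,\norm{\psi_t(\delta^k a)-\delta^k a}_\mA\,dt$. Each $\delta^k a$ lies in $C(\mA)$ (as used in the preceding proposition), so $\norm{\psi_t(\delta^k a)-\delta^k a}_\mA\to 0$ as $t\to 0$, and since $(\phi_\sigma)$ is an approximate identity this integral tends to $0$ as $\sigma\to\infty$ for each fixed $k$. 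Moreover it is dominated by $(M_\Psi+1)\norm{\phi}_{L^1}\norm{\delta^k a}_\mA$, whence the terms $M_k^{-1}\norm{\delta^k(a_\sigma)-\delta^k a}_\mA$ are dominated by the summable sequence $(M_\Psi+1)\norm{\phi}_{L^1}\,M_k^{-1}\norm{\delta^k a}_\mA$. Applying dominated convergence to the series $\norm{a_\sigma-a}_{\dada \mA}=\sum_{k\ge 0}M_k^{-1}\norm{\delta^k(a_\sigma)-\delta^k a}_\mA$ gives $\norm{a_\sigma-a}_{\dada \mA}\to 0$, completing the proof.

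The routine analytic points — absolute convergence of the Bochner integrals and the interchange of the closed operator $\delta^k$ with the integral, justified by closedness together with the absolute convergence of $\int \phi_\sigma^{(k)}(t)\psi_t(a)\,dt$ — I would relegate to standard facts. The crux is the bandlimitedness claim: the argument hinges on recognizing that $\delta^k$ applied to the orbit $t\mapsto\psi_t(a)$ is the $k$-th time derivative, which after integration by parts moves all the growth in $k$ onto the scalar kernel $\phi_\sigma$, where Bernstein's inequality supplies the sharp geometric bound $(2\pi\sigma)^k$. Without this transfer, a direct smoothing estimate would only reproduce the (generally unbounded) growth of $\norm{\delta^k a}_\mA$ and would fail to exhibit $a_\sigma$ as bandlimited.
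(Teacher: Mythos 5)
Your proof is correct, but it takes a genuinely different route from the paper's. The paper argues structurally: it first checks that the \BL\ elements of \mA\ and of $\dada\mA$ coincide (a Bernstein bound $\norm{\delta^k a}_\mA \leq C\sigma^k$ in \mA\ transfers to $\norm{\delta^k a}_{\dada\mA} \leq C\sigma^k \norm{a}_{\dada\mA}$ because $\norm{\delta^k a}_{\dada\mA}=\sum_{l\geq 0} M_l^{-1}\norm{\delta^{k+l}a}_\mA$, while the converse is trivial from $\norm{\cdot}_\mA \leq \norm{\cdot}_{\dada\mA}$), and then simply invokes the abstract Weierstrass theorem (Theorem~\ref{proposition:weierstrassBL}) for the Banach algebra $\dada\mA$ itself, combined with the preceding proposition $C(\dada\mA)=\dada\mA$. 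You instead inline a direct mollification argument: $a_\sigma=\int_\br \phi_\sigma(t)\psi_t(a)\,dt$ with $\hat\phi\in C_c^\infty$, passage of the closed operator $\delta^k$ through the Bochner integral, integration by parts onto the kernel, and the scalar $L^1$ Bernstein inequality to obtain $\norm{\delta^k(a_\sigma)}_\mA \leq C(2\pi\sigma)^k$ --- which is essentially how such Weierstrass theorems are proved in the first place, so your argument is self-contained where the paper's is modular. You also prove convergence directly in the $\dada\mA$-norm via dominated convergence on the series, so you do not need $C(\dada\mA)=\dada\mA$ as a black box; the only continuity input is $\delta^k a \in C(\mA)$, which indeed holds since $\delta^k a \in \mD(\delta)$ for every $k$. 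What the paper's route buys is the reusable structural fact that \BL\ elements (and their bandwidths) agree in \mA\ and $\dada\mA$, which is exploited again, e.g., in the proof of Proposition~\ref{proposition:commddic}; what yours buys is an explicit $\sigma$-\BL\ approximant with quantitative control through the tails $\sum_{k>N}M_k^{-1}\norm{\delta^k a}_\mA$ and the modulus of continuity of the orbit. Two cosmetic points: the inclusion $\dada\mA \subseteq \bigcap_k \mD(\delta^k)$ is built into the definition of the norm (finiteness of $\norm{\delta^k a}_\mA$ presupposes $a \in \mD(\delta^k)$) rather than being ``forced'' by its finiteness; and you should record explicitly that $a_\sigma \in \dada\mA$ --- this follows either from your domination estimate, or directly from $\norm{a_\sigma}_{\dada\mA} \leq C\sum_{k\geq 0}(2\pi\sigma)^k/M_k = C\,v_M(\sigma) < \infty$, the series converging for every $\sigma$ because an algebra sequence satisfies $M_k \geq k!\,M_1^k$.
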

\begin{proof}
  We have to verify that the \BL\ elements of \mA\ coincide with the \BL\ elements of $\dada \mA$. Indeed, if $a$ is \BL\ in \mA\ with bandwidth $\sigma$, this means that
$\norm{\delta^ka}_\mA \leq C \sigma^k$ for all $k \in \bn_0$. This implies that
\[
\norm{\delta^k a}_{\dada \mA} = \sum_{l=0}^\infty \frac{\norm{\delta^{l+k}a}_\mA}{M_l} \leq C \sigma^k  \sum_{l=0}^\infty \frac{\norm{\delta^{l}a}_\mA}{M_l}=  C \sigma^k \norm{a}_{\dada \mA} \,,
\]
so $a$ is \BL\ in $\dada \mA$.
Assume now that $a$ is \BL\ in $\dada \mA$. By definition we obtain
$
\norm{\delta^k a}_\mA \leq \norm{\delta^k a}_{\dada \mA} \leq C \sigma^k \,,
$
and so $a$ is \BL\ in \mA. 
An application of  Weierstrass' theorem (Theorem~\ref{proposition:weierstrassBL}) yields the assertion of the proposition.
\end{proof}
\begin{prop}\label{proposition:ddbesov}
$
\dada {\besov 1 r \mA} =\besov 1 r {\dada \mA} \quad \text{for  } r>0.
$  
\end{prop}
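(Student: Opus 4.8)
The plan is to prove the identity by showing that, with a fixed choice of difference order, the two norms are literally the same, so that the two Banach algebras coincide as sets with equal norms. Throughout, fix an integer $j>r$ and write $\Delta_t^j=(\psi_t-\id)^j$, reserving the letter $j$ for the order of the difference operator so that it does not clash with the summation index $k$ appearing in the Dales-Davie norm; recall that since $d=1$ here the relevant measure is $\abs t^{-r}\,dt/\abs t$.

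First I would collect the structural facts. The group $\Psi$ restricts to a bounded automorphism group on $\besov 1 r \mA$: because $\Psi$ is abelian, $\psi_s$ commutes with every $\Delta_t^j$, whence $\norm{\Delta_t^j\psi_s a}_\mA\le M_\Psi\norm{\Delta_t^j a}_\mA$ and therefore $\norm{\psi_s a}_{\besov 1 r \mA}\le M_\Psi\norm{a}_{\besov 1 r \mA}$. Differentiating $\psi_s\psi_t=\psi_t\psi_s$ at $s=0$ gives $\delta\psi_t=\psi_t\delta$ on the appropriate domains, and by iteration
\[
\delta^k \Delta_t^j = \Delta_t^j \delta^k \qquad \text{for all } k\in\bno,\ t\in\br.
\]
This commutation is what makes the two constructions interchangeable. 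Next I would identify the generator of $\Psi$ on $\besov 1 r \mA$ with $\delta$: since $\besov 1 r \mA \inject \mA$ continuously, every $\besov 1 r \mA$-limit of difference quotients is also an $\mA$-limit, so the intrinsic generator is a restriction of $\delta$, and conversely $\Delta_t^j\psi_h=\psi_h\Delta_t^j$ together with a dominated-convergence argument against $\abs t^{-r}\,dt/\abs t$ lets me upgrade $\mA$-differentiability to $\besov 1 r \mA$-differentiability once $\delta^k a\in\besov 1 r \mA$ for all $k$. This yields the set-theoretic identification $\delta^k_{\besov 1 r \mA}a=\delta^k a$.

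With the generators identified, the proof reduces to a term-by-term comparison. Writing out the left-hand norm gives
\[
\norm{a}_{\dada{\besov 1 r \mA}} = \sum_{k=0}^\infty \frac{1}{M_k}\Bigl(\norm{\delta^k a}_\mA + \int_\br \abs t^{-r}\norm{\Delta_t^j \delta^k a}_\mA\frac{dt}{\abs t}\Bigr),
\]
while the right-hand norm, after using $\delta^k\Delta_t^j=\Delta_t^j\delta^k$ inside $\norm{\Delta_t^j a}_{\dada{\mA}}=\sum_k M_k^{-1}\norm{\Delta_t^j\delta^k a}_\mA$, becomes
\[
\norm{a}_{\besov 1 r {\dada{\mA}}} = \norm{a}_{\dada{\mA}} + \int_\br \abs t^{-r}\sum_{k=0}^\infty \frac{\norm{\Delta_t^j \delta^k a}_\mA}{M_k}\frac{dt}{\abs t}.
\]
Because all summands and integrands are nonnegative, Tonelli's theorem permits interchanging $\sum_k$ and $\int_\br$ in the second display, after which the two right-hand sides are manifestly identical. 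Hence the norms agree (and the argument runs equally in either direction, so neither containment is lost), proving $\dada{\besov 1 r \mA}=\besov 1 r {\dada{\mA}}$.

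The hard part will be the generator identification: verifying that $a\in\besov 1 r {\dada{\mA}}$ — which only asserts $a\in\dada{\mA}$ and finiteness of $\int\abs t^{-r}\norm{\Delta_t^j a}_{\dada{\mA}}\,dt/\abs t$, i.e.\ membership of each $\delta^k a$ in $\besov 1 r \mA$ — genuinely forces $a$ into the domain of the $k$-fold iterated generator computed intrinsically in $\besov 1 r \mA$, so that $\dada{\besov 1 r \mA}$ is not a strictly smaller space than the naive norm computation suggests. Once this domain issue is settled, the boundedness of the restricted action, the commutation relations, and the Tonelli interchange are all routine.
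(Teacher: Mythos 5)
Your proof is correct and follows essentially the same route as the paper: the paper's proof is exactly the term-by-term norm comparison, writing $\norm{a}_{\dada{\besov 1 r \mA}}=\sum_k M_k^{-1}\norm{\delta^k a}_{\besov 1 r \mA}$, commuting $\delta^k$ past $\Delta_t^l$, and interchanging the sum and the integral (Tonelli, since everything is nonnegative) to arrive at $\norm{a}_{\besov 1 r {\dada \mA}}$, with the converse read off from the same identity. The only difference is that you make explicit the supporting technicalities the paper leaves tacit --- boundedness of the restricted action of $\Psi$ on $\besov 1 r \mA$ and the identification of the intrinsic generator with the restriction of $\delta$ --- which is sound added rigor rather than a different argument.
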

\begin{proof}
  Let $a \in \dada {\besov 1 r \mA}$, and assume that $l > \floor{r}$. Then
  \begin{align*}
    \norm{a}_{\dada {\besov 1 r \mA}}
    & = \sum_{k=0}^\infty {M_k^{-1}} \norm{\delta^ka}_ {\besov 1 r \mA}
     = \sum_{k=0}^\infty {M_k^{-1}} \int_0^\infty \frac{\norm{\Delta_t^l\delta^k a}_\mA} {\abs t ^r} \muleb {t} \\
    & = \int_0^\infty \frac{\norm{\Delta_t^l a}_{\dada \mA}} {\abs t ^r} \muleb {t}
    = \norm{a}_{\besov 1 r {\dada \mA}} \,, 
  \end{align*} 
 so $a \in \besov 1 r {\dada \mA} $. The same calculation shows also the converse inclusion.
\end{proof}
As $\app 1 r {\dada \mA}$=$\besov 1 r {\dada \mA}$ by~\eqref{eq:cha1}, Proposition~\ref{proposition:ddbesov} identifies the approximation spaces $\app 1 r {\dada \mA}$ with the Dales-Davie algebras over $\besov 1 r \mA$. \\
\\
As $\mC^1_v(\mA)$ is \IC\ in \mA, if $v$ is a GRS weight it would be natural to conjecture that $\dada \mA$ is \IC\ in \mA\ if and only if  $v_M$ is a GRS weight. However we can only prove the following.
\begin{thm}
  \label{thm-dada-is-IC}
Let $\mA$ be a \BA, and $M$ an algebra sequence. Set $P_k =M_k/k!$.
If 
\begin{equation}\label{eq:icsuff}
A_m= \sup \bigset{\bigl( {P_k}^{-1}{\prod_{j=1}^m P_{l_j}}\bigr)^{1/m} \colon l_j \geq 1 \text{ for } 1 \leq j \leq m, \sum_{j=1}^m l_j =k}
\end{equation}
satisfies $\lim_{m \to \infty}A_m=0$, then $\dada \mA$ is \IC\ in \mA.
\end{thm}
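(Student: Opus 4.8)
The plan is to verify the definition of inverse-closedness head-on: assuming $a \in \dada\mA$ and $\inv a \in \mA$, I will show that $\norm{\inv a}_{\dada\mA} = \sum_{k\ge 0} \inv M_k \norm{\delta^k(\inv a)}_\mA$ is finite. Before any estimate, however, I must know that $\inv a$ admits derivations of all orders, i.e.\ $\inv a \in C^\infty(\mA)$, so that the symbols $\delta^k(\inv a)$ are meaningful and the quotient rule is legitimate at each order. Since $\mA$ is \emph{not} assumed symmetric here, I cannot simply invoke the inverse-closedness of $\mD(\delta)$ from Section~\ref{sec:derivations}. Instead I would exploit the automorphism group: because $a \in \dada\mA \subseteq C^\infty(\mA)$, the orbit map $t \mapsto \psi_t(a)$ is a $C^\infty$ curve in $\mA$ with $\frac{d^k}{dt^k}\psi_t(a) = \psi_t(\delta^k a)$, and since each $\psi_t$ is a unital automorphism, $\psi_t(\inv a) = \inv{(\psi_t(a))}$. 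As inversion is a $C^\infty$ map on the open set of invertibles of $\mA$, the composition $t \mapsto \inv{(\psi_t(a))} = \psi_t(\inv a)$ is again $C^\infty$; differentiating at $t=0$ shows $\inv a \in C^\infty(\mA)$ and validates $\delta(\inv a) = -\inv a\,\delta(a)\,\inv a$ to every order.

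With $\inv a \in C^\infty(\mA)$ in hand, I would apply Lemma~\ref{invnormderiv} in the one-parameter case $d=1$. Every tuple $B$ then consists only of the index $1$, so an ordered partition into $m$ nonempty blocks is precisely a composition $k_1 + \dots + k_m = k$ with $k_j \ge 1$ and $\delta_{B_i}(a) = \delta^{k_i}(a)$, giving
\[
\delta^k(\inv a) = \sum_{m=1}^k (-1)^m \sum_{\substack{k_1 + \dots + k_m = k \\ k_j \ge 1}} \inv a\, \delta^{k_1}(a)\, \inv a \cdots \inv a\, \delta^{k_m}(a)\, \inv a.
\]
Writing $c_l = \norm{\delta^l(a)}_\mA / M_l$, so that $\sum_{l\ge0} c_l = \norm{a}_{\dada\mA} < \infty$ and $\norm{\delta^{k_j}(a)}_\mA = c_{k_j} M_{k_j}$, and taking norms yields
\[
\norm{\delta^k(\inv a)}_\mA \le \sum_{m=1}^k \norm{\inv a}_\mA^{m+1} \sum_{\substack{k_1 + \dots + k_m = k \\ k_j \ge 1}} \Bigl(\prod_{j=1}^m c_{k_j}\Bigr)\Bigl(\prod_{j=1}^m M_{k_j}\Bigr).
\]

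The decisive step is to control $\prod_j M_{k_j}$ by $M_k$. Writing $M_{k_j} = P_{k_j}\,k_j!$ and using both $\prod_j k_j! \le k!$ (the multinomial coefficient is at least $1$) and the defining bound $\prod_{j=1}^m P_{k_j} \le A_m^m P_k$ read off from \eqref{eq:icsuff}, I obtain $\prod_j M_{k_j} \le A_m^m M_k$. Dividing the last display by $M_k$, summing over $k \ge 1$, interchanging the nonnegative sums over $m$ and $k$, and using $\sum_{k\ge m}\sum_{k_1+\dots+k_m=k}\prod_j c_{k_j} = S^m$ with $S = \sum_{l\ge1} c_l < \infty$, the whole tail collapses to
\[
\sum_{k\ge 1} \frac{\norm{\delta^k(\inv a)}_\mA}{M_k} \le \norm{\inv a}_\mA \sum_{m=1}^\infty \bigl(\norm{\inv a}_\mA\, A_m\, S\bigr)^m.
\]
Since $A_m \to 0$ by hypothesis, the root test makes this series converge, so $\norm{\inv a}_{\dada\mA} < \infty$ and $\inv a \in \dada\mA$. (Here the standing assumption that $M$ is an algebra sequence, i.e.\ $P$ is \smult, guarantees $\prod_j P_{l_j} \le P_k$ and hence that each $A_m \le 1$ is well defined.)

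I expect the combinatorics of the iterated quotient rule and the interchange of summation to be entirely routine. The genuine obstacle is the preliminary claim $\inv a \in C^\infty(\mA)$: without symmetry of $\mA$ this does not follow from the derivation machinery directly, which is exactly what forces the detour through the smoothness of the orbit map $t \mapsto \psi_t(a)$ and the analyticity of inversion on the group of invertibles.
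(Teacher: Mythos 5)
Your argument is correct and is essentially the paper's own proof: you apply the iterated quotient rule of Lemma~\ref{invnormderiv} in the one-parameter case, bound $\prod_{j=1}^m M_{k_j} \leq A_m^m M_k$ by combining $\prod_{j=1}^m k_j! \leq k!$ with the definition of $A_m$, interchange the nonnegative sums to reach $\norm{\inv a}_{\dada \mA} \leq \norm{\inv a}_\mA + \sum_{m \geq 1} \norm{\inv a}_\mA^{m+1} A_m^m S^m$ with $S = \norm{a}_{\dada \mA} - \norm{a}_\mA$, and conclude by the root test since $A_m \to 0$ --- exactly the chain of estimates in the paper. Your preliminary verification that $\inv a \in C^\infty(\mA)$, via smoothness of the orbit map $t \mapsto \psi_t(a)$ and analyticity of inversion on the invertible group, is a sound justification of a point the paper's proof leaves implicit; the only slip is terminological, since the algebra-sequence condition $P_{k+l} \geq P_k P_l$ makes $P$ supermultiplicative rather than submultiplicative.
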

\begin{rem}
  Before proving the theorem we  point out how condition (\ref{eq:icsuff}) is related to the properties of $v_M$.
If (\ref{eq:icsuff}) is valid, it follows in particular that $\lim_{k \to \infty}(M_k/k!)^{1/k}=\infty$ (Choose $l_j=1$ for all $j$ on the RHS of (\ref{eq:icsuff}) ). By Proposition~\ref{prop:assweights} (\ref{asw3}) this means that $v_M$ satisfies the GRS condition.
Let us assume now that  $v_M$ satisfies the GRS condition, or equivalently $\lim_{k \to \infty}P_k^{1/k}=\infty$.  If the sequence $P_k$ is log-convex, then it satisfies (\ref{eq:icsuff}) by ~\cite[Cor. 3.6]{honary07}. 
Choose now a sequence $N_k$ such that $N_k/k!$ is the log-convex minorant of $P_k$ (clearly, $N_k \leq M_k$), then  $N_k/k!$ satisfies (\ref{eq:icsuff}) and is GRS, and $v_{N} \geq v_M$. This means that condition (\ref{eq:icsuff}) does not put stronger growth restrictions on $v_M$ than GRS, but rather imposes some sort of regularity condition.

In this respect it would be interesting to solve the equivalence problem for Dales-Davie algebras: What are the conditions for two algebra sequences $M$ and $N$ such that $\dadas 1 M \mA=\dadas  1 N \mA$?
\end{rem}
\begin{proof}
We use the iterated quotient rule~(\ref{eq:itquot0}) to estimate the norm of $\inv a$.
  \begin{align*}
   \norm{\inv a}_{\dadas 1 M \mA} &= \sum_{k=0}^\infty \frac{\norm{\delta^k(\inv a )}_\mA}{M_k}\\
    &\leq  \norm{\inv a}_\mA +   \sum_{k=0}^\infty \frac{k!}{M_k} \sum_{m=1}^k\norm{\inv a}_\mA^{m+1} \sum_{\substack{l_1+\dotsc l_m=k\\ l_j\geq1}} \prod_{j=1}^m\frac{\norm{\delta^{l_j}(a)}_\mA}{l_j!}\\
    &=  \norm{\inv a}_\mA +   \sum_{k=0}^\infty \frac{k!}{M_k} \sum_{m=1}^k\norm{\inv a}_\mA^{m+1} \sum_{\substack{l_1+\dotsc l_m=k\\ l_j\geq1}} \prod_{j=1}^m P_{l_j}\frac{\norm{\delta^{l_j}(a)}_\mA}{M_{l_j}!}\\
    &=  \norm{\inv a}_\mA +  \sum_{m=1}^k \norm{\inv a}_\mA^{m+1} \sum_{k=m}^\infty  \sum_{\substack{l_1+\dotsc l_m=k\\ l_j\geq1}}  \frac{1}{P_k}\prod_{j=1}^m P_{l_j}\frac{\norm{\delta^{l_j}(a)}_\mA}{M_{l_j}!}\\
    & \leq  \norm{\inv a}_\mA+ \sum_{m=1}^\infty\norm{\inv a}_\mA^{m+1} A_m^m (\norm{a}_{\dadas 1 M \mA} - \norm{a})^m
  \end{align*}
By hypothesis, for any $\epsilon>0$ there exists an index $m_\epsilon$, such that  $A_m < \epsilon$ for all $m > m_\epsilon$.
So if $\epsilon$ is small enough the series converges, and $\norm{\inv  a}_{\dada \mA} < \infty$.
\end{proof}

The condition~\eqref{eq:icsuff} is not easy to verify. Some  sufficient conditions on  $P_k=M_k/k!$ are given in~\cite{honary07}.
\begin{ex}
  The Gevrey sequence  $M_k=k!^r$, $r>1$ is an algebra sequence, and $P_k=k!^{r-1}$ is log-convex. By~\cite[Cor. 3.6]{honary07} this implies (\ref{eq:icsuff}), so  $\dada \mA$ is \IC\ in \mA.
\end{ex}

If the algebra $\mA$ is \emph{commutative}, we can do better by adapting a proof of Hulanicki~\cite{Hulanicki72}.
\begin{prop}\label{proposition:commddic}
  If $\mA$ is a commutative, symmetric \BA , $\Psi$ a periodic one-parameter group of automorphisms acting on \mA,  and $M$ a weight sequence that satisfies $\lim_{k \to \infty}(M_k/k!)^{1/k}=\infty$ (equivalently, $v_M$ is a GRS weight), then $\dada \mA$ is \IC\ in \mA.
\end{prop}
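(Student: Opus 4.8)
The plan is to show the spectral radius identity $\rho_{\dada\mA}(a)=\rho_\mA(a)$ for every $a\in\dada\mA$ and to deduce inverse-closedness from it using the symmetry of \mA. First I would record that $\dada\mA$ is a Banach $*$-subalgebra of \mA\ with isometric involution: since $\Psi$ consists of $*$-automorphisms the generator satisfies $\delta(a^*)=\delta(a)^*$, hence $\|\delta^k(a^*)\|_\mA=\|\delta^k(a)\|_\mA$ and $\|a^*\|_{\dada\mA}=\|a\|_{\dada\mA}$. Granting the radius identity, the reduction is standard: if $a\in\dada\mA$ is invertible in \mA, then $h=a^*a\in\dada\mA$ is self-adjoint and, by symmetry of \mA, $\sigma_\mA(h)\subseteq(0,\infty)$; choosing $c>0$ with $\sigma_\mA(h)\subseteq(0,2c)$ gives $\rho_\mA(1-h/c)<1$, whence $\rho_{\dada\mA}(1-h/c)<1$ and the Neumann series for $(h/c)^{-1}$ converges in $\dada\mA$, so $h^{-1}\in\dada\mA$ and $a^{-1}=h^{-1}a^*\in\dada\mA$. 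Since $\dada\mA\inject\mA$, the bound $\rho_{\dada\mA}(a)\ge\rho_\mA(a)$ is automatic, so the entire problem is the reverse inequality $\rho_{\dada\mA}(a)\le\rho_\mA(a)$.

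The engine is the elementary inequality $\|x\|_{\dada\mA}\le\sum_{l\in\bz}\|\hat x(l)\|_\mA\,v_M(\abs{l})$, valid for every $x$; it follows from $\delta^k x=\sum_l(2\pi i l)^k\hat x(l)$, the triangle inequality, and the definition \eqref{eq:assweight} of $v_M$ (the computation of the Example, now read as an inequality). I would first treat a \BL\ element $b$ of bandwidth $L$, so that $\hat b(l)=0$ for $\abs{l}>L$. Then $b^m$ is \BL\ of bandwidth $mL$, only the frequencies $\abs{l}\le mL$ occur, and since $v_M$ is increasing and $\|\widehat{b^m}(l)\|_\mA\le M_\Psi\|b^m\|_\mA$,
\[
\|b^m\|_{\dada\mA}\le v_M(mL)\sum_{\abs{l}\le mL}\|\widehat{b^m}(l)\|_\mA\le (2mL+1)\,M_\Psi\,v_M(mL)\,\|b^m\|_\mA .
\]
Taking $m$-th roots and letting $m\to\infty$, the polynomial factor and $M_\Psi$ disappear, $\|b^m\|_\mA^{1/m}\to\rho_\mA(b)$, and $v_M(mL)^{1/m}\to1$ because $v_M$ is a GRS weight, which is exactly the hypothesis $\lim_k(M_k/k!)^{1/k}=\infty$ via Lemma~\ref{prop:assweights}\,(\ref{asw3}). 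Hence $\rho_{\dada\mA}(b)=\rho_\mA(b)$ for \BL\ $b$.

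To pass to an arbitrary $a\in\dada\mA$ I would use that the \BL\ elements are dense in $\dada\mA$ (Proposition~\ref{cor-bl-dense}): fix $\epsilon>0$, choose a \BL\ $b$ of some bandwidth $L$ with $\|a-b\|_{\dada\mA}<\epsilon$, and set $c=a-b$, so $\|c\|_\mA\le\|c\|_{\dada\mA}<\epsilon$. Here commutativity enters decisively: since $b$ and $c$ commute, $(b+c)^n=\sum_{j=0}^n\binom nj b^{\,n-j}c^{\,j}$, and bounding each $\|b^{\,n-j}\|_{\dada\mA}$ by the \BL\ estimate above together with $\|b^m\|_\mA\le C_\delta(\rho_\mA(b)+\delta)^m$ gives
\[
\|a^n\|_{\dada\mA}\le (2nL+1)\,M_\Psi\,C_\delta\,v_M(nL)\,(\rho_\mA(b)+\delta+\epsilon)^n .
\]
Taking $n$-th roots and using $v_M(nL)^{1/n}\to1$ yields $\rho_{\dada\mA}(a)\le\rho_\mA(b)+\delta+\epsilon$. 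Finally, again by commutativity $\rho_\mA$ is subadditive on the commuting pair $a,c$, so $\rho_\mA(b)\le\rho_\mA(a)+\rho_\mA(c)\le\rho_\mA(a)+\epsilon$; letting $\delta\to0$ and then $\epsilon\to0$ gives $\rho_{\dada\mA}(a)\le\rho_\mA(a)$, which is what remained.

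The main obstacle is the uniform control of the weight along all powers. In the general Theorem~\ref{thm-dada-is-IC} the growth of $M$ had to be strong enough (condition~\eqref{eq:icsuff}) to tame the iterated quotient rule, whereas the key point here is that products of \BL\ elements stay \BL\ with only \emph{additively} growing bandwidth, so the weight enters $\|b^m\|_{\dada\mA}$ through the single factor $v_M(mL)$, which is subexponential precisely under the GRS condition. This is exactly where commutativity is indispensable, both for the binomial expansion that isolates the \BL\ part and for the subadditivity of the spectral radius on commuting elements, and it is what lets us replace \eqref{eq:icsuff} by the far weaker GRS hypothesis.
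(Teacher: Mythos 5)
Your proposal is correct and takes essentially the same route as the paper: decompose $a$ into a bandlimited part plus a remainder that is small in $\dadas 1 M \mA$, exploit commutativity through the binomial expansion of powers, let the GRS property of $v_M$ absorb the weight factor after taking $n$-th roots, and conclude $\rho_{\dadas 1 M \mA}(a)=\rho_\mA(a)$. The only deviations are cosmetic: the paper bounds the bandlimited part directly by Bernstein's inequality as in \eqref{eq:ddbl}, so it avoids your (harmless) Fourier-counting factor $(2nL+1)$, and it concludes by citing Hulanicki's lemma \cite[Prop.~2.5]{Hulanicki72} where you spell out the equivalent Neumann-series argument for $h=a^*a$.
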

\begin{proof}
  Assume an  $\epsilon >0$, and decompose $a \in \mA$ into
$
a= a_\sigma +r 
$, 
where $\norm{r}_{\dada \mA} < \epsilon$, and $a_\sigma$ is $\sigma$-\BL\ for a $\sigma>0$ that clearly depends on $\epsilon$.
Bernstein's inequality for \BL\ elements (Equation~\eqref{eq:bernstein}) implies that
\begin{equation}
  \label{eq:ddbl}
      \norm{a_\sigma}_{\dada \mA} 
    = \sum_{k=0}^\infty\frac{\norm{\delta^k(a_\sigma)}_\mA}{M_k}
    \leq \sum_{k=0}^\infty\frac{(2 \pi \sigma)^k}{M_k}\norm{a_\sigma}_\mA
= v_M( \sigma) \norm{a_\sigma}_\mA \,.
  \end{equation}
This implies that
\begin{align*}
  \norm{a^p}_{\dada \mA}
  &\leq \sum_{l=0}^p \binom{p}{l} \norm{a_\sigma^l}_{\dada \mA} \epsilon^{p-l} 
  \leq C \sum_{l=0}^p \binom{p}{l} v_M( l \sigma)\norm{a_\sigma}^l_{ \mA} \epsilon^{p-l} \\
  &\leq C v_M( p \sigma)\sum_{l=0}^p \binom{p}{l} \norm{a_\sigma}^l_{ \mA} \epsilon^{p-l} 
   = C v_M( p \sigma) (\norm{a_\sigma}_\mA + \epsilon)^p \\
 & \leq  C v_M( p \sigma) (\norm{a}_\mA + 2\epsilon)^p  \, ,
\end{align*}
 where we have used that $a_\sigma^l$ is $l\sigma$-\BL. So
$
\rho_{\dada \mA}(a)=\lim_{p \to \infty}\norm{a^p}_{\dada \mA}^{1/p} \leq \norm{a}_\mA+2\epsilon 
$, and consequently $\rho_{\dada \mA}(a)= \rho_{ \mA}(a)$.
The Lemma of Hulanicki~\cite[Prop. 2.5]{Hulanicki72} then shows that $\dada \mA$ is \IC\ in \mA.
\end{proof}
\section{Applications to Matrix Algebras with \odd}
\label{sec:matrix-algebras-with}
\subsection{Preliminaries}
In this section we apply the theory developed so far to \IC\ subalgebras of infinite matrices with \odd. This is continuation of~\cite{grkl10,klotz10a}. 

A \emph{\MA} \mA\ (over \bzd) is a \BA\ of matrices that is
continuously embedded in \bopzd.  We drop the reference to the index set
\bzd\ whenever possible.

Our examples are  \MA s  with \odd. One way to describe \odd\ is by weights. 
If \mA\ is a \MA\ and $w$ a weight on \bzd,
  the weighted  space $\mA_w$ consists of the matrices $A \in \mA$ such
  that the matrix $A_w$ with entries 
$ A_w(k,l)= A(k,l)w(k-l)$
is in
  \mA.  The norm on $\mA_w$ is $\norm{A}_{\mA_w}=\norm{A_w}_\mA$.

      If \mA\ is a \MA\ over \bzd,  we define the symmetric and commuting  derivations
      $\delta_j(A)(k,l)= [X_j,A](k,l) = 2 \pi i (k_j-l_j)A(k,l)$, where $X_j=\diag(2 \pi i k_j)_{k \in \bzd}$ and   $1\leq j \leq d$. 
      
      We call \mA\  \emph{solid}, if $A \in \mA$ and $\abs {B(k,l)}
      \leq \abs {A(k,l)}$ for all indices $k,l$ implies $B \in \mA$
      and $\norm{B}_\mA \leq \norm{A}_\mA$. If \mA\ is solid, then 
      $\mA^{(m)}=
      \mA_{v_m}$ (see Section~\ref{sec:derivations} for the definition of $\mA^{(m)}$). In particular, $\mA_{v_m}$ is an \IC\ subalgebra of
      \mA.
    
With the help of the modulation operator $M_t x (k)= \cexp [k \cdot t] x(k)$, $ k\in \bzd$ we define
 a bounded and periodic group action $\chi_t$ on \bop by
\begin{equation*} \chi_t(A) = M_t A M_{-t},\; \chi_t(A)(k,l)=\cexp [(k-l) \cdot t]A(k,l)  \, .
  \end{equation*}
  We call the \MA\ \mA\  \emph{\hmg} (c.~\cite{Deleeuw75,Deleeuw77}, see also \cite[Chapter 9]{Shapiro71}), if the periodic  automorphism group $\chi = \set{ \chi _t}_{t \in \brd}$ is uniformly bounded on \mA. In this case the derivations $\delta_j$ defined above are the generators of the automorphism group.  

If \mA\ is a \HMA, then an easy computation shows that the $k$th Fourier coefficient of $A \in \mA$ coincides with the $k$th side diagonal, so the notation is consistent.

\begin{ex}
(a) The algebra \bop\ itself is a \HMA.  
(b) If \mA\ is solid, then clearly $\mA$ is
  \hmg.  
  (c) In the literature the \HMA s $\cc w=\mC^p_w(\bop)$ are often considered
  (see Definition~\ref{defn-weighted}). More examples can be found in \cite{grkl10,klotz10a}.
\end{ex}

A second possibility to define \MA s with \odd\ is by approximation.
 If \mA\ is a \MA\ and 
$
\mT_N=\set{A \in \mA \colon A=\sum_{\abs k_\infty < N}\hat A (k)}
$
denotes the  matrices in \mA\ with bandwidth smaller than $N$, then $(\mT_N)_{N\geq0}$ is an approximation scheme for \mA.
In this case the algebras $\app p r \mA$ consist of matrices with some kind of \odd\ that in general cannot expressed by weights, see \cite{grkl10,klotz10a}.

If \mA\ is a \HMA, then 
  a $A \in \mA$ is banded with bandwidth $N$,  if and only if it is $N$-bandlimited with
  respect to the group action $\{\chi _t\}$, see~\cite[5.7]{grkl10}.

\subsection{Smooth and ultradifferentiable \MA s }
\label{sec:sumas}
If we apply Proposition~\ref{cinfchar} to \HMA s we obtain the following result.
\begin{cor}
  If \mA\ is a \HMA\,  and $A \in \mA$, then
$\norm{\hat A(k)}_\mA = \bigo (\abs k ^{-r})$ for all $r>0$ if and only if 
          $E_l(A) = \bigo (l^{-r})$ for all $r>0$. 
If $\inv A \in \mA$ then these  conditions imply that
$
\norm{\widehat{\inv A}(k)}_\mA = \bigo (\abs k ^{-r})$ for all $r>0$.
\end{cor}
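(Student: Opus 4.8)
The plan is to read this corollary as a direct specialization of Proposition~\ref{cinfchar} to the setting of homogeneous matrix algebras, together with the inverse-closedness already established for the $C^\infty$ class. First I would observe that a \HMA\ \mA\ is, by definition, a \BA\ equipped with the periodic automorphism group $\chi=\set{\chi_t}$ whose generators are precisely the derivations $\delta_j$. Moreover, as noted in the text immediately preceding this subsection, the $k$th Fourier coefficient $\hat A(k)$ with respect to $\chi$ coincides with the $k$th side diagonal of $A$, so the hypotheses of Proposition~\ref{cinfchar} are met and the Fourier-coefficient language used there is exactly the side-diagonal language here.

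The first equivalence is then immediate. The condition $\norm{\hat A(k)}_\mA=\bigo(\abs k^{-r})$ for all $r>0$ is equivalent to $\lim_{\abs k\to\infty}\norm{\hat A(k)}_\mA\abs k^r=0$ for all $r>0$ (replacing a given $r$ by $r+1$ converts an $\bigo$ bound into the little-$o$ statement, and conversely little-$o$ trivially implies $\bigo$). Likewise $E_l(A)=\bigo(l^{-r})$ for all $r>0$ is equivalent to $\lim_{l\to\infty}E_l(A)l^r=0$ for all $r>0$. By Proposition~\ref{cinfchar}, with $(X_n)=(\mT_N)$ the approximation scheme of \BL\ (equivalently banded) elements, each of these two little-$o$ conditions is equivalent to $A\in C^\infty(\mA)$. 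Chaining these equivalences gives the stated ``if and only if''.

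For the final assertion I would invoke inverse-closedness of the $C^\infty$ class. The preliminaries record that $\mA^{(\infty)}=C^\infty(\mA)=\bigcap_{k}\mA^{(k)}$ is \IC\ in \mA\ (the cited result \cite[3.7]{grkl10}), at least when \mA\ is symmetric and $\one\in\mD(\delta_k)$ for each $k$; in the matrix setting $\delta_j$ is a symmetric derivation and the identity lies in every $\mD(\delta_j)$, so this applies. Thus if $A\in C^\infty(\mA)$ and $\inv A\in\mA$, then $\inv A\in C^\infty(\mA)$, whence by the equivalence just established $\norm{\widehat{\inv A}(k)}_\mA=\bigo(\abs k^{-r})$ for all $r>0$.

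I do not anticipate a genuine obstacle, since the corollary is essentially a dictionary translation of earlier results; the only point requiring mild care is the little-$o$/$\bigo$ reconciliation between the two formulations, and the verification that \mA\ being a \HMA\ supplies exactly the symmetric periodic automorphism group with closed symmetric generators $\delta_j$ needed both for Proposition~\ref{cinfchar} and for the inverse-closedness of $C^\infty(\mA)$. If one wishes to avoid assuming \mA\ symmetric, the inverse-closedness of $C^\infty(\mA)$ can instead be read off from the Besov-space characterization (Proposition~\ref{proposition:jacksonbernstein} identifies the relevant polynomial-decay condition with membership in $\besov\infty r\mA$, and these Besov algebras are \IC\ by \cite[3.8]{klotz10a}), which is the route I would take as a fallback.
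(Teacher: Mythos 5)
Your proposal is correct and follows essentially the paper's own route: the corollary is presented there as a direct application of Proposition~\ref{cinfchar} to a \HMA\ (using that the $\chi$-action is periodic, that Fourier coefficients coincide with side diagonals, and that banded matrices are exactly the \BL\ elements), with the little-$o$ versus $\bigo$ translation just as you describe, and the inverse statement coming from inverse-closedness of the resulting smooth class. The one soft spot is your primary justification via \cite[3.7]{grkl10}: that result requires the \emph{algebra} \mA\ to be symmetric, and symmetry of the derivations $\delta_j$ does not supply this for a general \HMA; however, your stated fallback ($A \in C^\infty(\mA)$ precisely when $A \in \besov \infty r \mA$ for all $r>0$, each of which is \IC\ in \mA\ by \cite[3.8]{klotz10a} with no symmetry hypothesis) closes this gap cleanly, so nothing essential is missing.
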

Considering Carleman classes for a \HMA\  we want to identify the trivial classes first.
 If $\mA$ is a \HMA, then $C_M(\mA)$ is trivial if and only if \mA\ consists only of diagonal matrices. 

It is of some interest, that for a special class of \HMA s we obtain a converse of Theorem~\ref{cha:gener-carl-classthm-carleman-IC} by
 adapting a  construction in~\cite[Thm 1]{Siddiqi90}.
\begin{prop}
  Assume that \mA\ is a \HMA\ and that the translation operators $T_k$ defined by  $(T_k x)(l) = x(l-k)$ are uniformly bounded:
  \begin{equation}\label{eq:transub}
  \norm{T_k}_\mA \leq C
  \end{equation}
 for all $k\in \bzd$. If the nontrivial algebra $C_M(\mA)$ is \IC\ in \mA, then the defining sequence $M$ is almost increasing.
\end{prop}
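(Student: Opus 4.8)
The plan is to prove the contrapositive: assuming that the sequence $(M_k/k!)^{1/k}$ is \emph{not} almost increasing (this is the condition of Theorem~\ref{cha:gener-carl-classthm-carleman-IC}, and is the natural reading of ``$M$ almost increasing'' here), I will produce an $a\in C_M(\mA)$ that is invertible in $\mA$ but whose inverse does not lie in $C_M(\mA)$, contradicting inverse-closedness. By Proposition~\ref{prop:equivclasses} the nontriviality hypothesis forces $\varliminf M_k^{1/k}>0$, which rules out the trivial class (for which the conclusion could otherwise fail, since diagonal matrices are trivially inverse-closed); in the remaining regimes the construction below applies. The counterexample is an operator-theoretic version of Siddiqi's scalar lacunary construction, assembled from the translation operators $T_{me_1}$, which by hypothesis satisfy $\norm{T_{me_1}}_\mA\le C$.

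The key reduction is that shifts in a single coordinate direction mimic a scalar Carleman class. Since $T_{me_1}$ is supported on the diagonal $k-l=me_1$ and $\delta_j(A)(k,l)=2\pi i(k_j-l_j)A(k,l)$, one has $\delta_1(T_{me_1})=2\pi i m\,T_{me_1}$ and $\delta_j(T_{me_1})=0$ for $j\neq1$. Hence for $a=\sum_m c_m T_{me_1}$ every mixed derivative $\delta^\alpha a$ vanishes unless $\alpha$ is supported on the first coordinate, and $\delta_1^n a=\sum_m c_m(2\pi i m)^n T_{me_1}$. Boundedness of the translations gives the upper estimate $\norm{\delta_1^n a}_\mA\le C\sum_m\abs{c_m}(2\pi\abs m)^n$, so $a\in C_{r,M}(\mA)$ as soon as $\sum_m\abs{c_m}(2\pi\abs m)^n\le C_0 r^n M_n$ for all $n$. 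For matching \emph{lower} bounds I use the continuous embedding $\mA\inject\bopzd$: every matrix entry is dominated by the operator norm, so $\abs{A(p,q)}\le\norm{A}_\bop\le C\norm{A}_\mA$; in particular $\delta_1^k a$ has, on the diagonal $\nu e_1$, an entry of modulus $(2\pi\abs\nu)^k\abs{c_\nu}$.

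With these tools the construction runs as follows. Choose a super-lacunary frequency sequence $N_1<N_2<\cdots$ and amplitudes $\rho_j>0$ with $C\sum_j\rho_j<1$, and set $b=\sum_j\rho_j T_{N_je_1}$, $a=\one-b$. Then $\norm{b}_\mA<1$, so the Neumann series $a^{-1}=\sum_{n\ge0}b^n$ converges in $\mA$ and $a^{-1}\in\mA$; and $a\in C_M(\mA)$ precisely when the $\rho_j,N_j$ are tuned so that $\sum_j\rho_j(2\pi N_j)^n\le C_0 r^n M_n$ for all $n$, which by the upper estimate is the only requirement. If the $N_j$ grow fast enough that, within the relevant range of $n$, the frequency $nN_j$ is representable in exactly one way as a sum of the $N_i$'s occurring in $b$, then the coefficient of $T_{nN_je_1}$ in $a^{-1}$ equals $\rho_j^{\,n}$, whence the entry bound gives $\norm{\delta_1^k(a^{-1})}_\mA\ge C^{-1}(2\pi nN_j)^k\rho_j^{\,n}$ for every admissible pair $(j,n)$. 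The failure of $(M_k/k!)^{1/k}$ to be almost increasing---equivalently, by \eqref{eq:2}, the failure of $M_k^{1/k}/k$ to be almost increasing---is exactly what permits a choice of $N_j,\rho_j$ for which, optimizing over $n\approx k/\log(1/\rho_j)$, these lower bounds exceed $r^kM_k$ along a sequence of orders $k\to\infty$, for \emph{every} fixed $r>0$. Thus $a^{-1}\notin C_{r,M}(\mA)$ for all $r$, i.e. $a^{-1}\notin C_M(\mA)$.

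The main obstacle is the simultaneous calibration in the last step: the amplitudes $\rho_j$ must be small enough relative to $M$ at \emph{all} orders to keep $a$ inside the class via the upper estimate, yet large enough that the Neumann-series coefficients $\rho_j^{\,n}$ drive the reciprocal's derivatives past $r^kM_k$ at the orders where $M_k/k!$ dips, and this uniformly in $r$. This balancing is precisely the content of Siddiqi's lacunary estimate; transporting it to the operator setting rests only on the two norm inequalities above, together with the strong-lacunarity bookkeeping that guarantees unique representability of the exploited frequencies and hence the exact coefficients $\rho_j^{\,n}$.
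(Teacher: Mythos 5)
Your operator-theoretic scaffolding is sound, and in fact it parallels the paper's own setup more closely than you may realize: the paper also applies the hypothesis to a positive lacunary combination of shifts, namely $A=\sum_{j\ge1}2^{-j}T_M(u_j)^{-1}T_{u_je_1}$, with the frequencies $u_j$ chosen at the vertices of the Newton polygon so that $T_M(u_j)=u_j^j/M_j$, and it reduces to the scalar symbol $f(t)=\sum_j 2^{-j}T_M(u_j)^{-1}\cexp[u_jt_1]$ via the convolution structure (your weaker entrywise bound $\abs{A(p,q)}\le\norm{A}_{\bop}\le C\norm{A}_\mA$ would serve the same purpose). Incidentally, your uniqueness-of-representation bookkeeping is superfluous: all Neumann coefficients of $(\one-b)^{-1}$ are positive, so the coefficient of $T_{nN_je_1}$ is at least $\rho_j^n$ no matter how $nN_j$ decomposes.

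The genuine gap is at the decisive step, which you assert rather than prove. Your whole contrapositive rests on the claim that failure of the almost-increasing condition permits calibrating $(N_j,\rho_j)$ so that $(2\pi nN_j)^k\rho_j^n$ exceeds $r^kM_k$ along $k\to\infty$ for \emph{every} $r>0$, while the upper estimate keeps $a\in C_M(\mA)$; you defer this to ``Siddiqi's lacunary estimate,'' but that citation does not discharge it. Siddiqi's necessity argument in \cite{Siddiqi90} (Thm 1), which the paper adapts, contains no lower bound on the coefficients of the inverse and no optimization over Neumann indices: it works \emph{directly}, not by contrapositive. Assuming inverse-closedness, one gets $\norm{\delta_1^m(\lambda-A)^{-1}}_\mA\le Cq^mM_m$, passes to $\norm{D_1^m(\lambda-f)^{-1}}_{L^\infty}$, expands $D_1^m(\lambda-f)^{-1}(0)$ by the iterated quotient rule (Lemma~\ref{invnormderiv}), and exploits that $\lambda-f>0$ and all $D^kf(0)/i^k>0$, so every term in the expansion carries the same sign; hence each \emph{single} term $\binom{m}{k_1,\dotsc,k_l}\prod_{j}M_{k_j}$ is dominated by $Cq^mM_m$, and the converse direction of Lemma~\ref{lem-almost-increasing} (inequality \eqref{eq:weakinceq}) yields the almost-increasing property. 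To make your route work you would instead have to prove a new sequence-theoretic lemma — that non-almost-increasing forces, for every $C,q>0$, integers $n$ and frequencies $u$ realizable in your lacunary set with $T_M(nu/q)>(CT_M(u))^n$ — and this is exactly where the Newton-polygon choice of frequencies would have to enter; nothing in your sketch supplies it. A smaller omission: in the regime $0<\varliminf M_k^{1/k}<\infty$ of Proposition~\ref{prop:equivclasses}, $C_M(\mA)$ consists of banded matrices, $(M_k/k!)^{1/k}$ tends to $0$ along a subsequence (so is certainly not almost increasing), and your infinite-frequency construction is unavailable; like the paper, you must dispose of this case separately by showing banded matrices are not \IC\ in \mA.
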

\begin{rem}
 (a)  Actually, it follows from the proof that itsuffices to assume that  $\norm{T_{ke_1}}_\mA \leq C$ for all $k\in \bno$. (b) The condition~(\ref{eq:transub}) is equivalent to $\mC^1_{v_0} \inject \mA$, where $\mC^1_{v_0}$ denotes the (unweighted) Baskakov algebra, see Section~\ref{sec:matrix-algebras-with}.
\end{rem}
\begin{proof}
  \Wlog\ we may assume that $M=(M_k)_{k \in \bno}$ is log-convex. By Proposition \ref{prop:equivclasses}, the condition  $\varliminf M_k =0$ implies that $C_M(\mA)$ is trivial. If  $\varliminf M_k <\infty$ the algebra $C_M(\mA)$ consists of the banded matrices, which are not \IC\ in \mA. So we may assume that  $\lim_{k \to \infty}M_k^{1/k}=\infty$.
Using the convexity  polygon with vertices $\bigl((k, \log k)\bigr)_{k \geq 0}$  we can find  positive integers $(u_j)_{j \in \bno}$ with the property that 
$
T_M(u_j)=\inv{M_j} u_j^j
$
(\cite{Mandelbrojt52}, see also \cite{koosis88} for a detailed discussion). The matrix
$
A=\sum_{j = 1}^\infty 2^{-j} T^{-1}_M(u_j) T_{u_j e_1} 
$
satisfies
\[
\norm{\delta_1^m(A)}_\mA \leq C \sum_{j=1}\frac{(2 \pi)^m u_j^m}{ 2 ^j T_M(u_j)} \leq C'(2 \pi)^m M_m 
\]
for a constant $C' >0$.  If we choose $\lambda > 1 +\norm{A}_\mA$ then $(\lambda - A)$ is invertible in \mA, and, by hypotheses, $\inv{(\lambda - A)} \in C_M(\mA)$. This means that $\norm{\delta_1^m \inv{(\lambda - A)} }_\mA < C q^m M_m$  for all $m \in \bno$ and for constants $C, q >0$.
As \mA\ is a \MA\ it follows that   $\norm{\delta_1^m \inv{(\lambda - A)} }_{\bop} < C q^m M_m$. As $A$ is constant along the diagonals standard facts on convolution operators imply that 
 $\norm{\delta_1^m \inv{(\lambda - A)} }_{\bop}= \norm{D^m_1\inv{(\lambda -f)}}_{L^\infty(\btd)}$, where
\[
f(t)=\sum_{j = 1}^\infty 2^{-j} T^{-1}_M(u_j) \cexp [u_j \cdot t_1] \,.
\]
As in ~\cite[Thm 1]{Siddiqi90} we can conclude that $D_1^m f(0)= i^m s_m$, where $s_m > K^{-m} M_m$ for a $K > 1$.
Now let us consider the expansion, given by the iterated quotient rule
\begin{align*}
D^m_1 (\lambda -f)^{-1}(0)&=\sum_{l=1}^m(\lambda-f)^{-l-1}(0) \sum_{\substack{k_1+ \dotsb k_l=m \\ k_j \geq 1}} \binom{k}{k_1, \dotsc,k_l} \prod_{j=1}^l D_1^{k_i}f (0)   \\
&=i^mK^{-m}\sum_{l=1}^m(\lambda-f)^{-l-1}(0) \sum_{\substack{k_1+ \dotsb k_l=m \\ k_j \geq 1}} \binom{k}{k_1, \dotsc,k_l} \prod_{j=1}^l s_{i_k}
\end{align*}
 As $\lambda-f >0$,  we conclude that
 \begin{equation*}
   \sum_{l=1}^m(\lambda-f)^{-l-1}(0) \sum_{\substack{k_1+ \dotsb k_l=m \\ k_j \geq 1}} \binom{k}{k_1, \dotsc,k_l} \prod_{j=1}^l M_{i_k} \leq  C q^m M_m \,, 
 \end{equation*}
and, in particular 
\begin{equation*}
  \binom{k}{k_1, \dotsc,k_l} \prod_{j=1}^l M_{i_k} \leq  C q^m M_m \,,
\end{equation*}
for $k_1+ \dotsb k_l=m$ (actually, we have assumed that $k_j \geq 1$ for all indices $j$, but the reader can easily verify the last relation for $k_j=0$ as well). By Lemma~\ref{lem-almost-increasing} this means that $M$ is almost increasing.
\end{proof}
\begin{cor}
  Assume that \mA\ is a \HMA\ with uniformly bounded translations. Any  nontrivial Carleman class  $C_M(\mA)$ is \IC\ in \mA, if and only if $(M_k/k!)^{1/k}$ is almost increasing.
\end{cor}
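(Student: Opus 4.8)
The plan is to read this corollary as a packaging of the two preceding results---Theorem~\ref{cha:gener-carl-classthm-carleman-IC} and the immediately preceding Proposition---verifying in each case that the hypotheses specific to a \HMA\ with uniformly bounded translations are available. Both directions are then immediate applications, so the work is entirely bookkeeping: matching the standing assumptions of the corollary to the hypotheses of the two black boxes and tracking where the nontriviality assumption is actually used.

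For the implication from ``$(M_k/k!)^{1/k}$ almost increasing'' to inverse-closedness I would argue as follows. Since \mA\ is a \HMA, the periodic automorphism group $\chi=\set{\chi_t}$ acts on \mA\ and is uniformly bounded, so Proposition~\ref{prop:equivclasses} and its corollary apply and yield the equality $C_M(\mA)=C_{M^c}(\mA)$. Granting the hypothesis that $(M_k/k!)^{1/k}$ is almost increasing, the two hypotheses of Theorem~\ref{cha:gener-carl-classthm-carleman-IC} are then met verbatim, and we conclude that $C_M(\mA)$ is \IC\ in \mA. Note that this direction does not invoke nontriviality. For the converse I would appeal to the preceding Proposition directly: its hypotheses are exactly the standing assumptions here, namely that \mA\ is a \HMA\ and that the translations are uniformly bounded, (\ref{eq:transub}). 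Hence if the \emph{nontrivial} class $C_M(\mA)$ is \IC\ in \mA, the Siddiqi-type construction in that Proposition produces the multinomial estimate (\ref{eq:weakinceq}), which by Lemma~\ref{lem-almost-increasing} is equivalent to $(M_k/k!)^{1/k}$ being almost increasing. Here one must read the conclusion of that Proposition through Lemma~\ref{lem-almost-increasing}---that is, as the assertion that $(M_k/k!)^{1/k}$, rather than $M$ itself, is almost increasing---since that is the form actually established in its proof.

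I do not expect a substantive obstacle, as all the analytic content lives in the Theorem and the Proposition. The only points requiring care are the following two. First, the correct auxiliary condition must be fed into each result: the equality $C_M=C_{M^c}$, which rests on the presence of the automorphism group and is supplied by the \HMA\ structure, for the ``if'' part; and the uniform boundedness of the translations, the extra hypothesis driving the Siddiqi construction, for the ``only if'' part. Second, one should keep track of where nontriviality enters---only in the converse direction, where (together with \IC ness) it rules out, via Proposition~\ref{prop:equivclasses}, the degenerate case and the case in which $C_M(\mA)$ reduces to the banded matrices, leaving the regime $\lim_{k\to\infty} M_k^{1/k}=\infty$ in which the construction operates.
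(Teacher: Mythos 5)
Your proposal is correct and takes essentially the same route the paper intends: the corollary is stated without proof precisely because it is the conjunction of Theorem~\ref{cha:gener-carl-classthm-carleman-IC} (sufficiency, with the hypothesis $C_M(\mA)=C_{M^c}(\mA)$ supplied by the automorphism group of the \HMA\ via Proposition~\ref{prop:equivclasses} and its corollary) and the immediately preceding Proposition (necessity, where nontriviality and \IC ness rule out the trivial and banded cases and leave the regime $\lim_{k\to\infty}M_k^{1/k}=\infty$ in which the Siddiqi construction runs). Your remark that the Proposition's conclusion must be read, through Lemma~\ref{lem-almost-increasing}, as the statement that $(M_k/k!)^{1/k}$ is almost increasing is also the correct reading of what its proof actually establishes.
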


The characterization of Carleman classes by weighted spaces and by approximation spaces in Proposition~\ref{proposition:carlemanisweighted.} yields a new  proof of a result of Demko, Smith and Moss~\cite{Demko84} and of Jaffard~\cite{Jaffard90}.
\begin{cor}
  Assume that $A \in \bop$ has  \odd\ of  exponential order: $\abs{{A(k,l)}} \leq C e^{-\gamma \abs{ k-l}^r }$ for constants $C,\gamma >0$, $0 < r \leq 1$, and all $k,l, \in \bzd$. If $\inv A \in \bop$, then there exist $C',\gamma'>0$ such that
  \[
\abs{\inv{A}(k,l)} \leq C' e^{-\gamma' \abs {k-l}^r} \quad \text{for all } k,l \in \bzd.
\]
\end{cor}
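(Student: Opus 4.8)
The plan is to read the entrywise decay hypothesis as membership of $A$ in a Gevrey Carleman class over $\bop$ and then to invoke the inverse-closedness of that class. First I would recast the hypothesis in terms of side diagonals. Since $\hat A(k)$ is the weighted shift supported on the diagonal $\set{(l,m) : l-m=k}$, its operator norm is $\norm{\hat A(k)}_{\bop}=\sup_{l-m=k}\abs{A(l,m)}$, so the bound $\abs{A(l,m)}\leq C e^{-\gamma\abs{l-m}^r}$ is equivalent to $\norm{\hat A(k)}_{\bop}\leq C e^{-\gamma\abs k^r}$ for all $k\in\bzd$. Thus the hypothesis is a decay condition on the Fourier coefficients of $A$ relative to the periodic automorphism group $\chi$, under which $\bop$ is a \HMA.

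Next I would match this decay to an associated function. For the Gevrey sequence $M_k=k!^{1/r}$ (with $1/r\geq 1$ since $0<r\leq 1$) the associated function is $T_M(u)=\exp(\tfrac{1}{re}u^{r})$, so the weight $T_{s,M}(k)=T_M(2\pi\abs k_\infty/s)$ of Lemma~\ref{proposition:weightinclusion} satisfies $T_{s,M}(k)\asymp\exp(c_s\abs k_\infty^r)$ with $c_s=\tfrac{(2\pi)^r}{re\,s^r}\to 0$ as $s\to\infty$. Using $\abs k_\infty\leq\abs k\leq\sqrt d\,\abs k_\infty$ and choosing $s$ large enough that $c_s\leq\gamma$, the product $\norm{\hat A(k)}_{\bop}\,T_{s,M}(k)$ is bounded, whence $A\in\mC^\infty_{T_{s,M}}(\bop)$. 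Since the Gevrey sequence satisfies Komatsu's condition (M2'), Proposition~\ref{proposition:carlemanisweighted.} gives $\bigcup_{s>0}\mC^\infty_{T_{s,M}}(\bop)=C_M(\bop)=\mJ_{1/r}(\bop)$, so $A\in\mJ_{1/r}(\bop)$.

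Then I would apply inverse-closedness. The algebra $\bop$ is symmetric and $\one$ is annihilated by each $\delta_j$; moreover $k!^{1/r}$ is log-convex and $(M_k/k!)^{1/k}\sim(k/e)^{1/r-1}$ is increasing for $1/r\geq 1$, so $C_M=C_{M^c}$ and $(M_k/k!)^{1/k}$ is almost increasing. By Theorem~\ref{cha:gener-carl-classthm-carleman-IC} (equivalently the corollary on Gevrey classes) $\mJ_{1/r}(\bop)$ is \IC\ in $\bop$, and since $\inv A\in\bop$ by hypothesis we get $\inv A\in\mJ_{1/r}(\bop)$. Finally I would translate back: $\inv A\in\bigcup_{s>0}\mC^\infty_{T_{s,M}}(\bop)$ yields an $s'>0$ with $\norm{\widehat{\inv A}(k)}_{\bop}\leq C''/T_{s',M}(k)\asymp C''e^{-c_{s'}\abs k_\infty^r}$, and combining $\abs k_\infty^r\geq d^{-r/2}\abs k^r$ with $\norm{\widehat{\inv A}(k)}_{\bop}=\sup_{l-m=k}\abs{\inv A(l,m)}$ gives $\abs{\inv A(l,m)}\leq C' e^{-\gamma'\abs{l-m}^r}$ with $\gamma'=c_{s'}d^{-r/2}$, as claimed.

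I expect the only genuine work to be the dictionary in the second paragraph: translating entrywise exponential decay of order $r$ into the associated function $T_M$ of the Gevrey sequence and verifying that the union over the scaling parameter $s$ absorbs both the prescribed constant $\gamma$ and the discrepancy between $\abs\cdot_\infty$ and the Euclidean $\abs\cdot$. Once that identification is in place the inverse-closedness is a black-box application of the Gevrey theorem established earlier, and no new estimate is required.
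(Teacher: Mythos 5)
Your proposal is correct and follows exactly the route the paper intends: the paper derives this corollary directly from Proposition~\ref{proposition:carlemanisweighted.} (the weighted/approximation characterization of Carleman classes under (M2')) together with the \IC ness of the Gevrey classes $\mJ_{1/r}(\bop)$ for $1/r\geq 1$ from Theorem~\ref{cha:gener-carl-classthm-carleman-IC}. Your dictionary between the entrywise bound and $\norm{\hat A(k)}_{\bop}$, the identification of the associated function of $M_k=k!^{1/r}$, and the bookkeeping between $\abs{\cdot}_\infty$ and the Euclidean norm correctly fill in the details the paper leaves implicit.
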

\begin{rem}
  If $r < 1$ better results exist~\cite{Baskakov97,GL04a}.  
\end{rem}

Dales-Davie algebras of matrices describe new forms of \odd. It has already been  stated that $\mC^1_{v_M}=\dada {\mC^1_{v_0}}$. Now consider the following simple corollary of Theorem~\ref{thm-dada-is-IC}.
  \begin{cor}
    If $A \in \bopz$ satisfies
    \[
    \sum_{k=0}^\infty (k!)^{-r} \norm{\delta^k A}_{\bop(\bz)} < \infty \,
    \]
for a $r>1$, and $A$ has an inverse $\inv A \in \bopz$, then this inverse satisfies the same estimate.
  \end{cor}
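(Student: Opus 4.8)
The plan is to read the hypothesis as the assertion that $A$ lies in a particular Dales--Davie algebra and then to apply the inverse-closedness result of Theorem~\ref{thm-dada-is-IC}. Set $M_k=(k!)^r$. With this choice the Dales--Davie norm
\[
\norm{A}_{\dadas{1}{M}{\bopz}}=\sum_{k=0}^\infty M_k^{-1}\norm{\delta^k A}_{\bop}
\]
is exactly the series in the hypothesis, so the assumption on $A$ is precisely that $A\in\dadas{1}{M}{\bopz}$, and the conclusion to be proved is $\inv A\in\dadas{1}{M}{\bopz}$. The algebra $\bopz$ supports the structure needed for the construction: the modulation operators $M_t=\diag(\cexp[kt])_{k\in\bz}$ are unitary, so $\chi_t(A)=M_tAM_{-t}$ is an isometric (hence uniformly bounded) periodic one-parameter automorphism group, and its generator is the derivation $\delta$. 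Thus $\bopz$ falls under the setting of Section~\ref{sec:dales-davie-classes}.

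The substantive step is to check that $M_k=(k!)^r$ satisfies the hypotheses of Theorem~\ref{thm-dada-is-IC}. First, $M$ is an algebra sequence: $M_0=1$, and since $r>1$ and $(k+l)!\geq k!\,l!$ one has
\[
\frac{M_{k+l}}{(k+l)!}=\bigl((k+l)!\bigr)^{r-1}\geq(k!\,l!)^{r-1}=\frac{M_k}{k!}\,\frac{M_l}{l!}\,.
\]
Second, writing $P_k=M_k/k!=(k!)^{r-1}$, the successive ratios $P_{k+1}/P_k=(k+1)^{r-1}$ are increasing, so $P_k$ is log-convex. As recorded in the Gevrey Example following Theorem~\ref{thm-dada-is-IC}, log-convexity of $P_k$ forces condition~\eqref{eq:icsuff} by~\cite[Cor.~3.6]{honary07}.

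With both hypotheses in hand, Theorem~\ref{thm-dada-is-IC} yields that $\dadas{1}{M}{\bopz}$ is \IC\ in $\bopz$. Since $A\in\dadas{1}{M}{\bopz}$ while $\inv A\in\bopz$ by assumption, inverse-closedness gives $\inv A\in\dadas{1}{M}{\bopz}$, which is exactly the claimed estimate for $\inv A$. There is no genuine obstacle once the hypothesis is recognized as membership in $\dadas{1}{M}{\bopz}$: the only point requiring care is the verification of~\eqref{eq:icsuff}, and this is already furnished by the log-convexity of $P_k=(k!)^{r-1}$ together with~\cite[Cor.~3.6]{honary07}, precisely as in the Gevrey Example. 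Hence the corollary is an immediate specialization of Theorem~\ref{thm-dada-is-IC} to $\mA=\bopz$ with $M_k=(k!)^r$.
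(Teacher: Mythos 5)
Your proposal is correct and takes essentially the same route as the paper: the corollary is stated there precisely as a specialization of Theorem~\ref{thm-dada-is-IC} to $\mA=\bopz$ with $M_k=(k!)^{r}$, relying on the paper's Gevrey example in which $M$ is checked to be an algebra sequence and $P_k=(k!)^{r-1}$ is log-convex, so that \cite[Cor.~3.6]{honary07} yields condition~\eqref{eq:icsuff}. Your verification of the algebra-sequence inequality and of the log-convexity of $P_k$ fills in exactly the computations the paper leaves implicit, so nothing further is needed.
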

The result remains valid, if we replace $(k!)^{-r}$ by any algebra sequence $M$, such that $M_k/k!$ is log-convex and $\lim_{k \to \infty}(M_k/k!)^{1/k}=\infty$.

\appendix

\section*{An Iterated Quotient rule}

We provide a proof of Lemma~\ref{invnormderiv}.
The proof is by induction over $\abs B$. If $\abs B = 1$ there is nothing to prove.
Assume that the statement is true for $\abs B < k$, and assume $\abs B = k$.
The Leibniz rule for $\delta_B(\inv a a)$ yields
\begin{equation}
  \label{eq:leibnizpart}
  \delta_B(\inv a a)=0=
  \sum_{(B_1,B_2) \in P(B,2)}\delta_{B_1}(\inv a) \delta_{B_2}(a)+ \inv a \delta_B(a) + \delta_B(\inv a) a \,,
\end{equation}
So 
\begin{equation}
  \label{eq:quot1}
  \delta_B(\inv a) = - \inv a \delta_B(a) \inv a
                  - \sum_{(B_1,B_2) \in P(B,2)}\delta_{B_1}(\inv a) \delta_{B_2}(a) \inv a  \,.
\end{equation}

As $\abs{B_1}<k$ we can apply the induction hypothesis.
\begin{align*}
  \delta_B(\inv a) = &- \inv a \delta_B(a) \inv a\\
    &-\sum_{(B_1,B_2) \in P(B,2)} 
\sum_{m=1}^{\abs{ B_1}}(-1)^m \sum_{(D_i)_{1 \leq i \leq m}\in P(B_1,m)}\Bigl( \prod_{j=1}^m \inv a \delta_{D_i}(a) \Bigr) \inv a \, \delta_{B_2}(a) \inv a  \,.
\end{align*}
Interchanging the first two summations we obtain
\begin{align*}
  \delta_B(\inv a) = &- \inv a \delta_B(a) \inv a\\
    &-\sum_{m=1}^{k-1}(-1)^m \sum_{\substack{(B_1,B_2) \in P(B,2)\\ \abs{B_1}\geq m}} 
 \sum_{(D_i)_{1 \leq i \leq m}\in P(B_1,m)}\Bigl( \prod_{j=1}^m \inv a \delta_{D_i}(a) \Bigr) \inv a \, \delta_{B_2}(a) \inv a  \,.
\end{align*}
Now observe that in this expression $(D_1,\cdots,D_m,B_2)$ varies over all partitions of $B$, The condition 
$(D_i)_{1 \leq i \leq m}\in P(B_1,m)$
already implies that $\abs{B_1} \geq m$, so we can set $D_{m+1}=B_2$ and obtain
\begin{align*}
  \delta_B(\inv a) = &- \inv a \delta_B(a) \inv a\\
    &-\sum_{m=1}^{k-1}(-1)^m  
 \sum_{(D_i)_{1 \leq i \leq m+1}\in P(B,m+1)}\Bigl( \prod_{j=1}^{m+1} \inv a \delta_{D_i}(a) \Bigr) \inv a \, .
\end{align*}
We change the summation index.
\begin{align*}
  \delta_B(\inv a) = &- \inv a \delta_B(a) \inv a\\
    &+\sum_{l=2}^{k}(-1)^m  
 \sum_{(D_i)_{1 \leq i \leq l}\in P(B,l)}\Bigl( \prod_{j=1}^{l} \inv a \delta_{D_i}(a) \Bigr) \inv a \, .
\end{align*}
The term $- \inv a \delta_B(a) \inv a$ can be included into the sum for $l=1$. We obtain
\[
\delta_B(\inv a) = \sum_{l=1}^{k}(-1)^m  
 \sum_{(D_i)_{1 \leq i \leq l}\in P(B,l)}\Bigl( \prod_{j=1}^{l} \inv a \delta_{D_i}(a) \Bigr) \inv a \, ,
\]
and this is \eqref{eq:itquot0}. 

\begin{thebibliography}{10}

\bibitem{honary07}
M.~Abtahi and T.~G. Honary.
\newblock On the maximal ideal space of {D}ales-{D}avie algebras of infinitely
  differentiable functions.
\newblock {\em Bull. Lond. Math. Soc.}, 39(6):940--948, 2007.

\bibitem{Almira01}
J.~M. Almira and U.~Luther.
\newblock Approximation algebras and applications.
\newblock In {\em Trends in approximation theory (Nashville, TN, 2000)}, Innov.
  Appl. Math., pages 1--10. Vanderbilt Univ. Press, Nashville, TN, 2001.

\bibitem{Almira06}
J.~M. Almira and U.~Luther.
\newblock Inverse closedness of approximation algebras.
\newblock {\em J. Math. Anal. Appl.}, 314(1):30--44, 2006.

\bibitem{Baskakov97}
A.~G. Baskakov.
\newblock Asymptotic estimates for elements of matrices of inverse operators,
  and harmonic analysis.
\newblock {\em Sibirsk. Mat. Zh.}, 38(1):14--28, i, 1997.

\bibitem{Butzer67}
P.~L. Butzer and H.~Berens.
\newblock {\em Semi-groups of operators and approximation}.
\newblock Die Grundlehren der mathematischen Wissenschaften, Band 145.
  Springer-Verlag New York Inc., New York, 1967.

\bibitem{Butzer68}
P.~L. Butzer and K.~Scherer.
\newblock {\em Approximationsprozesse und {I}nterpolationsmethoden}.
\newblock B. I. Hochschulskripten 826/826a. Bibligraphisches Institut,
  Mannheim, 1968.

\bibitem{dales73}
H.~G. Dales and A.~M. Davie.
\newblock Quasianalytic {B}anach function algebras.
\newblock {\em J. Functional Analysis}, 13:28--50, 1973.

\bibitem{Deleeuw75}
K.~De{L}eeuw.
\newblock {A}n harmonic analysis for operators. {I}: {F}ormal properties.
\newblock {\em {I}ll. {J}. {M}ath.}, 19:593--606, 1975.

\bibitem{Deleeuw77}
K.~{D}e{L}eeuw.
\newblock {A}n harmonic analysis for operators. {I}{I}: {O}perators on
  {H}ilbert space and analytic operators.
\newblock {\em {I}ll. {J}. {M}ath.}, 21:164--175, 1977.

\bibitem{Demko84}
S.~Demko, W.~F. Moss, and P.~W. Smith.
\newblock Decay rates for inverses of band matrices.
\newblock {\em Math. Comp.}, 43(168):491--499, 1984.

\bibitem{floret68}
K.~Floret and J.~Wloka.
\newblock {\em Einf\"uhrung in die {T}heorie der lokalkonvexen {R}\"aume}.
\newblock Lecture Notes in Mathematics, No. 56. Springer-Verlag, Berlin, 1968.

\bibitem{gorbachuk91}
V.~I. Gorbachuk and M.~L. Gorbachuk.
\newblock {\em Boundary value problems for operator differential equations},
  volume~48 of {\em Mathematics and its Applications (Soviet Series)}.
\newblock Kluwer Academic Publishers Group, Dordrecht, 1991.
\newblock Translated and revised from the 1984 Russian original.

\bibitem{gorbachuk89}
V.~I. Gorbachuk and A.~V. Knyazyuk.
\newblock Boundary values of solutions of operator-differential equations.
\newblock {\em Uspekhi Mat. Nauk}, 44(3(267)):55--91, 208, 1989.

\bibitem{gorny39}
A.~Gorny.
\newblock Contribution \`a l'\'etude des fonctions d\'erivables d'une variable
  r\'eelle.
\newblock {\em Acta Math.}, 71:317--358, 1939.

\bibitem{grkl10}
K.~Gr{\"o}chenig and A.~Klotz.
\newblock Noncommutative approximation: Inverse-closed subalgebras and
  off-diagonal decay of matrices.
\newblock {\em Constructive Approximation}, 32:429--446, 2010.

\bibitem{GL04a}
K.~Gr{\"o}chenig and M.~Leinert.
\newblock Symmetry and inverse-closedness of matrix algebras and functional
  calculus for infinite matrices.
\newblock {\em Trans. Amer. Math. Soc.}, 358(6):2695--2711 (electronic), 2006.

\bibitem{GR08}
K.~Gr{\"o}chenig and Z.~Rzeszotnik.
\newblock Banach algebras of pseudodifferential operators and their almost
  diagonalization.
\newblock {\em Ann. Inst. Fourier (Grenoble)}, 58(7):2279--2314, 2008.

\bibitem{Hulanicki72}
A.~Hulanicki.
\newblock On the spectrum of convolution operators on groups with polynomial
  growth.
\newblock {\em Invent. Math.}, 17:135--142, 1972.

\bibitem{Jaffard90}
S.~Jaffard.
\newblock Propri\'et\'es des matrices ``bien localis\'ees'' pr\`es de leur
  diagonale et quelques applications.
\newblock {\em Ann. Inst. H. Poincar\'e Anal. Non Lin\'eaire}, 7(5):461--476,
  1990.

\bibitem{klotz10a}
A.~{Klotz}.
\newblock {Spectral Invariance of Besov-Bessel Subalgebras}.
\newblock {\em Journal of Approximation Theory}, 164: 268--269, 2012.

\bibitem{komatsu73}
H.~Komatsu.
\newblock Ultradistributions. {I}. {S}tructure theorems and a characterization.
\newblock {\em J. Fac. Sci. Univ. Tokyo Sect. IA Math.}, 20:25--105, 1973.

\bibitem{koosis88}
P.~Koosis.
\newblock {\em The logarithmic integral. {I}}, volume~12 of {\em Cambridge
  Studies in Advanced Mathematics}.
\newblock Cambridge University Press, Cambridge, 1988.

\bibitem{langenbruch89}
M.~Langenbruch.
\newblock Ultradifferentiable functions on compact intervals.
\newblock {\em Math. Nachr.}, 140:109--126, 1989.

\bibitem{Levin96}
B.~Y. Levin.
\newblock {\em Lectures on entire functions}, volume 150 of {\em Translations
  of Mathematical Monographs}.
\newblock American Mathematical Society, Providence, RI, 1996.
\newblock In collaboration with and with a preface by Yu. Lyubarskii, M. Sodin
  and V. Tkachenko, Translated from the Russian manuscript by Tkachenko.

\bibitem{Malliavin59}
P.~Malliavin.
\newblock Calcul symbolique et sous-alg\`ebres de {$L_{1}(G)$}. {I}, {II}.
\newblock {\em Bull. Soc. Math. France}, 87:181--186, 187--190, 1959.

\bibitem{Mandelbrojt42}
S.~Mandelbrojt.
\newblock Analytic functions and classes of infinitely differentiable
  functions.
\newblock {\em Rice Inst. Pamphlet}, 29(1):142, 1942.

\bibitem{Mandelbrojt52}
S.~Mandelbrojt.
\newblock {\em S\'eries adh\'erentes, r\'egularisation des suites,
  applications}.
\newblock Gauthier-Villars, Paris, 1952.

\bibitem{petzsche78}
H.-J. Petzsche.
\newblock Die {N}uklearit\"at der {U}ltradistributionsr\"aume und der {S}atz
  vom {K}ern. {I}.
\newblock {\em Manuscripta Math.}, 24(2):133--171, 1978.

\bibitem{petzsche84}
H.-J. Petzsche.
\newblock Approximation of ultradifferentiable functions by polynomials and
  entire functions.
\newblock {\em Manuscripta Math.}, 48(1-3):227--250, 1984.

\bibitem{Shapiro71}
H.~S. Shapiro.
\newblock {\em Topics in approximation theory}.
\newblock Springer-Verlag, Berlin, 1971.
\newblock With appendices by Jan Boman and Torbj\"orn Hedberg, Lecture Notes in
  Math., Vol. 187.

\bibitem{Siddiqi90}
J.~A. Siddiqi.
\newblock Inverse-closed {C}arleman algebras of infinitely differentiable
  functions.
\newblock {\em Proc. Amer. Math. Soc.}, 109(2):357--367, 1990.

\bibitem{Timan63}
A.~F. Timan.
\newblock {\em Theory of approximation of functions of a real variable}.
\newblock Translated from the Russian by J. Berry. English translation edited
  and editorial preface by J. Cossar. International Series of Monographs in
  Pure and Applied Mathematics, Vol. 34. A Pergamon Press Book. The Macmillan
  Co., New York, 1963.

\end{thebibliography}
\bibliographystyle{abbrv}
\def\cprime{$'$} \def\cprime{$'$} \def\cprime{$'$}

\end{document}